\numberwithin{equation}{section}
\newtheorem{Theorem}{Theorem}[section]
\newtheorem{Proposition}[Theorem]{Proposition}
\newtheorem{Lemma}[Theorem]{Lemma}
\newtheorem{Corollary}[Theorem]{Corollary}
\theoremstyle{definition}
\newtheorem{Definition}[Theorem]{Definition}
\newtheorem{Remark}[Theorem]{Remark}
\newtheorem{Fact}[Theorem]{Fact}
\newtheorem{Convention}[Theorem]{Convention}
\newcommand{\rvu}[0]{\mbox{\textup{RV}}}
\newcommand{\rve}[0]{\mbox{\textup{RV}$^{\textup{\small eq}}$} }
\newcommand{\vf}[0]{\mbox{\textup{VF}}}
\newcommand{\gl}[1]{GL$_{#1}(O_R)$}
\newcommand{\gln}[0]{\gl{n}}
\newcommand{\lto}[0]{\longrightarrow}
\newcommand{\tsp}[0]{\text{tsp}}
\newcommand{\rcf}[0]{\text{RCF}}
\newcommand{\lcon}[0]{L_{\text{convex}}}
\newcommand{\tcon}[0]{T_{\text{convex}}}
\newcommand{\lori}[0]{L_{\text{or}}}
\newcommand{\lrv}[0]{L_{\scriptsize \rvu}}
\newcommand{\trv}[0]{T_{\scriptsize \rvu}}
\newcommand{\lhen}[0]{L_{\text{Hen}}}
\newcommand{\then}[0]{T_{\text{Hen}}}
\newcommand{\starr}[0]{{}^*\mathbb R}
\newcommand{\sat}[0]{(R^*,\rvu^*)}
\newcommand{\subrv}[0]{{_{\textup{RV}}}}
\newcommand{\gen}[1]{\langle {#1}\rangle}
\newcommand{\genl}[1]{\langle {#1}\rangle_{L}}
\DeclareMathOperator{\res}{res}
\DeclareMathOperator{\rvo}{rv}
\DeclareMathOperator{\graph}{graph}
\DeclareMathOperator{\afd}{affdir}
\DeclareMathOperator{\jac}{Jac}
\DeclareMathOperator{\val}{v}
\DeclareMathOperator{\vrv}{\val_{_{\textup{RV}}}}
\DeclareMathOperator{\rad}{rad}
\DeclareMathOperator{\acl}{acl}
\DeclareMathOperator{\dcl}{dcl}
\DeclareMathOperator{\tp}{tp}
\title{\Large Non-archimedean stratifications in power bounded $T$-convex fields}
\author{{\normalsize Erick Garc\'ia Ram\'irez}\\
				{\small \textit{University of Leeds, United Kingdom}}}
\date{\small \today}
\begin{document}
\maketitle
\begin{abstract}
We show that functions definable in power bounded $T$-convex fields have the (multidimensional) Jacobian property. Building on work of I. Halupczok, this implies that a certain notion of non-archimedean stratifications is available in such valued fields. From the existence of these stratifications, we derive some applications in an archimedean o-minimal setting. As a minor result, we also show that if $T$ is power bounded, the theory of $T$-convex valued fields is $b$-minimal with centres.
\end{abstract}
\textbf{Keywords:} $T$-convex fields, t-stratifications, Jacobian property.
{\let\thefootnote\relax\footnotetext{\textit{Email:} mmegr@leeds.ac.uk}}

\section{Introduction}\label{introduction}
The present work has two interrelated aims in view. The first is to advance the study on definability in $T$-convex fields; in this respect this paper is in line with the work of L. van Dries and A. Lewenberg~\cite{driesII}~\cite{driesI}, the work of Y. Yin~\cite{yin} and, in a slightly different character, with the work of R. Cluckers and F. Loeser on $b$-minimality~\cite{raf2}. The second aim is to make the non-archimedean stratifications introduced by I. Halupczok in~\cite{immi}  available in the context of power bounded $T$-convex fields.  The latter aim touches on the study of singularities of definable sets.

Let $L$ be a language containing the language $\lori:=\{+,\cdot, 0,1,<\}$ of ordered rings, and $T$ a power bounded o-minimal $L$-theory extending $\rcf$, the $\lori$-theory of real closed fields. Assume that $R$ is a model of $T$. A $T$-\emph{convex subring} of $R$ is a proper convex subring $O_R\subseteq R$ closed under all $0$-definable continuous functions $f:R\lto R$. Such a subring is a valuation ring and thus we think of the pair $(R,O_R)$ as a valued field. If $M_R$ is the maximal ideal of $O_R$, we define $\rvu:=R^\times/(1+M_R) \cup \{0\}$. $\rvu\setminus \{0\}$ has a natural ordered group structure that can be extended conventionally to the whole of $\rvu$. We work with the two-sorted structure $(R,\rvu)$, where $R$ is seen as an $L$-structure, $\rvu$ as a $\{\ast, 1,0,<\}$-structure, and we connect the two sorts via the canonical map $\rvo:R\lto \rvu$ (extended by $\rvo(0)=0_{_{\textup{RV}}}$). Although we mostly work in this two-sorted language, our main results make reference to a larger language consisting of the sort $R$ as an $L$-structure, plus all the imaginary sorts coming from $\rvu$, along with all the natural maps between these sorts and these sorts and $R$. We write $(R, \rve)$ whenever we subscribe ourselves to the latter approach. 

 The attached valuation map on $R$ is denoted by $\val$ and is extended to $R^n$ by $\val((x_1,\dots,x_n)):=\min \{\val(x_i)\ | \ 1\leq i\leq n\}$. Our first main result is the following.

\begin{Theorem}\label{main1}
Any definable function in $(R,\rve)$ has the Jacobian property. This is, if $f:R^n\lto R$ is  definable in $(R,\rve)$, then there exists a map $\chi:R^n\lto S\subseteq \rve$ definable in $(R,\rve)$ such that if $q\in S$ satisfies that $\dim(\chi^{-1}(q))=n$, then there exists $z\in R^n\setminus \{0\}$ such that \begin{equation*}
\val(f(x)-f(x')-\langle z,x-x'\rangle)>\val(z)+\val(x-x'),
\end{equation*}
for all distinct $x,x'\in \chi^{-1}(q)$.
\end{Theorem}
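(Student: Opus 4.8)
The plan is to reduce an arbitrary $(R,\rve)$-definable $f$ to a function definable in the pure o-minimal field $R$, to settle the one-variable statement by a mean-value argument, and then to bootstrap to $R^n$ by induction, manufacturing the control map $\chi$ by recording the relevant RV-data along the way. So, first, I would invoke the structure theory of $T$-convex valued fields: by the quantifier elimination of van den Dries--Lewenberg~\cite{driesI},~\cite{driesII} the field sort is stably embedded as a pure $L$-structure, so any $(R,\rve)$-definable $f:R^n\lto R$ is, after a suitable $(R,\rve)$-definable partition of the domain, \emph{piecewise field-definable}: on each piece $C$ it agrees with an $L(R)$-definable map, which by o-minimality of $T$ may be taken $C^1$ on a box containing $C$. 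For this to be usable in the form required one needs the RV-sorts to be tame --- the value group a pure ordered vector space over the field of exponents, the residue field again a model of $T$, the definable sets organised into honest balls and boxes --- and it is here that the hypothesis that $T$ is power bounded enters; the relevant preparation for power-bounded $T$-convex fields is due to van den Dries--Lewenberg and Yin~\cite{yin}. (This circle of ideas also yields, as a by-product, the $b$-minimality with centres mentioned in the abstract.)

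For $n=1$, say $f=g$ with $g$ an $L(R)$-definable $C^1$ function on an interval $I$. The map $x\mapsto\rvo(g'(x))$ is $(R,\rve)$-definable, so by weak o-minimality I would cut $I$ into finitely many convex pieces on which it is constant. On a piece $J$ with $\rvo(g')\equiv c\neq 0$ pick $a\in J$ and put $z:=g'(a)$; for distinct $x,x'\in J$ the mean value theorem in the real closed field $R$ gives $\xi$ between $x$ and $x'$ with $g(x)-g(x')=g'(\xi)(x-x')$, and $\rvo(g'(\xi))=\rvo(z)$ forces $g'(\xi)/z\in 1+M_R$, hence $\val(g'(\xi)-z)>\val(z)$, so that
\begin{equation*}
\val\bigl(f(x)-f(x')-z(x-x')\bigr)=\val\bigl((g'(\xi)-z)(x-x')\bigr)>\val(z)+\val(x-x').
\end{equation*}
(On a piece where $g'\equiv 0$ the function is locally constant and there is nothing to prove.) Taking $\chi$ to record the piece $C$ together with $c$, and refining so that the full-dimensional fibres of $\chi$ lie inside single pieces $J$, gives the case $n=1$.

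For general $n$, write $x=(x',x_n)$, prepare $f$ in the last variable uniformly in $x'$ as above to obtain $\partial_nf$ and a definable family of one-variable control maps, and apply the inductive hypothesis to $x'\mapsto\partial_nf(x')$ and to a field-definable section of $f$. After finitely many refinements one arrives at an $(R,\rve)$-definable partition whose full-dimensional pieces are, in suitably centred coordinates, boxes on which each $\rvo(\partial_if)$ is constant; fixing $a$ in such a box and $z:=(\partial_1f(a),\dots,\partial_nf(a))$, the multivariable mean value theorem --- applied one coordinate at a time, or along segments that stay inside the box --- gives $\xi$ with $f(x)-f(x')=\langle\nabla f(\xi),x-x'\rangle$, and estimating coordinatewise as before, using $\val(z)=\min_i\val(z_i)$, yields $\val(f(x)-f(x')-\langle z,x-x'\rangle)>\val(z)+\val(x-x')$. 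Encoding the box and the tuple $\bigl(\rvo(\partial_if)\bigr)_i$ into $\chi$ finishes the argument.

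I expect the mean-value estimates to be the easy part; the main obstacle is the structural input --- decomposing an arbitrary $(R,\rve)$-definable set into finitely many RV-indexed boxes adapted to $f$, that is, actually constructing $\chi$ with the stated property, and, in the multidimensional step, controlling the interaction between the (curved but $C^1$) cell centres supplied by the $T$-convex cell decomposition and the valuation. Marshalling the $T$-convex structure theory in the two-sorted language and adapting to it the estimates familiar from the Henselian residue-characteristic-zero case is where power boundedness is indispensable and where the real work lies; the inductive scaffolding of the previous paragraph is, by contrast, routine bookkeeping.
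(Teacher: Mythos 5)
Your one-variable argument --- cut to pieces where $\rvo(g')$ is constant, then invoke the mean value theorem --- matches the paper's Lemma~\ref{lemma5.8}, and you are right that power boundedness is the crucial structural hypothesis. (One minor slip: the field sort is \emph{not} stably embedded as a pure $L$-structure in $(R,\rve)$ --- the valuation ring is definable but not $L$-definable; what you actually need, and what suffices, is the piecewise $L$-definability of definable functions, i.e.\ Proposition~\ref{sharperwomono} and its multivariate version.) The multi-variable plan, however, has a genuine gap, and it is not merely the bookkeeping you defer: the $\lrv$-cells produced by cell decomposition are not products of intervals and v-discs. Their walls and centres are curved $C^1$ functions of the earlier coordinates, and one cannot straighten such a cell to a box without spoiling the constancy of the $\rvo(\partial_i f)$, since the chain rule drags in derivatives of the centre functions, which then need to be prepared in turn, recursively. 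Your coordinatewise mean-value estimate is correct on a literal box (and $\val(z_k)+\val(x_k-x'_k)\geq\val(z)+\val(x-x')$ plus the strict gain per coordinate does close it there), but obtaining such boxes definably and compatibly with $f$ is precisely the problem, and your sketch supplies no mechanism for it.

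The paper's induction rests on a bootstrapping that your proposal does not use, and it is the central idea of the proof. The Jacobian property up to dimension $n-1$, together with Theorem~\ref{assumptcon}, feeds into Halupczok's existence theorem for t-stratifications (\cite[Theorem 4.12]{immi}), giving t-stratifications of all definable maps $R^m\lto\rve$ for $m\leq n$. Proposition~\ref{refinerainbow} then refines such a t-stratification so that each $n$-dimensional rainbow fibre $C$ is the image of a genuine product of balls $B_1\times\dots\times B_n$ under a map $h=\varphi\circ M$ with $\varphi$ a risometry and $M$ a matrix in GL$_n(O_R)$; the risometry is exactly the device that absorbs the curved centres without disturbing the valuative estimates. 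The Jacobian property on $C$ is then established not coordinatewise but by pulling back along a single line segment $\eta(t)=th^{-1}(x)+(1-t)h^{-1}(x')$ in the straightened coordinates: because $h$ is a risometry composed with a matrix in GL$_n(O_R)$, the function $\rvo(\jac(h\circ\eta))$ is forced to be constant, which is precisely the hypothesis under which the one-variable Lemma~\ref{lemma5.8} applies to $g=f\circ h\circ\eta$ after rescaling. Without the detour through t-stratifications and risometries there is no replacement for this control of curved centres, which, as you yourself observe at the end, is where the real work lies.
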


The main consequence of such result---and certainly the motivation for this research---is the following theorem, now a simple corollary of Theorem~\ref{main1} and the work of I. Halupczok in~\cite{immi}.

\begin{Theorem}\label{main3}
Any definable map $\chi:R^n\lto \rve$ admits a t-stratification. In particular, definable sets in $(R,\rve)$ admit t-stratifications.
\end{Theorem}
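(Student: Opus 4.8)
The plan is to obtain this as a direct consequence of the general existence theorem for t-stratifications proved by Halupczok in~\cite{immi}. In the relevant form, that theorem asserts that a valued field, presented together with its leading-term structure exactly as in our two-sorted (respectively $\rve$-) structure $(R,\rve)$, admits t-stratifications reflecting arbitrary definable maps into the $\rve$-sorts, provided it satisfies a short list of hypotheses; the one substantial hypothesis is that every definable function has the Jacobian property. Theorem~\ref{main1} establishes precisely this for $(R,\rve)$, so the bulk of the argument is already in hand, and it remains to check the remaining hypotheses of~\cite{immi} and to invoke its conclusion.

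First I would assemble the auxiliary structural facts about $(R,\rve)$ that~\cite{immi} requires. Since $R\models T$ and $T\supseteq\rcf$, the field $R$ is real closed, so every positive element of $R$ has an $n$-th root for each $n\geq 1$; as $O_R$ is a proper convex subring, it follows that the value group $\val(R^\times)$ is a nontrivial divisible ordered abelian group, hence densely ordered. By~\cite{driesII} the residue field $O_R/M_R$ is again a model of $T$, in particular o-minimal. The quantifier elimination and the description of definable sets in the $\rve$-sorts furnished by~\cite{driesII} and Yin~\cite{yin} supply the control over the leading-term structure demanded by Halupczok's axioms. I would also observe that the Jacobian property in the shape actually consumed by~\cite{immi}---for functions $R^n\to R^m$, uniformly in definable families---follows from Theorem~\ref{main1} by applying it to each of the $m$ coordinate functions and passing to the common refinement of the resulting definable partitions of $R^n$, all uniformly in parameters.

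With these verifications in place, \cite{immi} delivers a t-stratification of $R^n$ reflecting any given definable map $\chi:R^n\to\rve$. For the ``in particular'' clause, given a definable set $X\subseteq R^n$ I would apply this to the definable map $x\mapsto\rvo(1_X(x))$ from $R^n$ to $\rve$, whose two fibres are $X$ (over $\rvo(1)$) and $R^n\setminus X$ (over $0_{_{\textup{RV}}}$); a t-stratification reflecting this map reflects $X$.

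The step I expect to demand the most care is the faithful translation between the precise formulations used in~\cite{immi}---of the Jacobian property, of the RV-language, and of its imaginary sorts---and those established here, together with the bookkeeping needed to upgrade the single-function statement of Theorem~\ref{main1} to the $R^m$-valued, parametrised, family version that Halupczok's machinery takes as input. Beyond this there is no further mathematical content: once Theorem~\ref{main1} is available, t-stratifications in $(R,\rve)$ come essentially for free.
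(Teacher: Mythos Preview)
Your approach is essentially the paper's: verify Halupczok's standing hypotheses, feed in the Jacobian property (Theorem~\ref{main1}), and invoke \cite[Theorem 4.12]{immi}. The paper records this as Corollary~\ref{existence2}, whose proof is two lines.

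Two small discrepancies are worth flagging. First, the auxiliary conditions you list (divisible value group, o-minimal residue field, quantifier elimination) are background facts but are not literally the hypotheses \cite[Hypothesis 2.21 (1)--(3), (4'')]{immi} that must be checked; the paper verifies those explicitly in Theorem~\ref{assumptcon}: stable embeddedness of $\rvu$, finiteness of the image of any definable map $\rvu\to R$, the one-dimensional ``finite $S_0$'' condition, and the injective/constant decomposition of definable functions $R\to R$. You should point to those specific verifications rather than to the general structural facts. Second, your worry about upgrading Theorem~\ref{main1} to $R^m$-valued functions in families is unnecessary: Halupczok's machinery consumes exactly the scalar-valued Jacobian property as stated, and the paper never performs (or needs) such an upgrade.
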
 

These results depend heavily on the power boundedness of $T$. In Subsection~\ref{notstrats} we give an example of a non power bounded o-minimal theory and a definable set not admitting a t-stratification. The example exploits the behaviour of an exponential map. Since a model of a non power bounded theory $T$ necessarily defines an exponential map (this is the \emph{dichotomy} in o-minimal theories, see~\cite{miller2}), the example is prevalent for all such $T$.

In the process of proving Theorem~\ref{main1} we deal with plenty of the properties of $\lrv$-definable sets. Part of this analysis builds on the work of Y. Yin in~\cite{yin}. Some of such properties resemble those explored in~\cite{raf2}. In fact, well before reaching Theorem~\ref{main1}, we prove the following.

\begin{Theorem}\label{main2}
The theory of $T$-convex fields (in the several-sorted language described above) is $b$-minimal with centres (over the sorts $\rve$).
\end{Theorem}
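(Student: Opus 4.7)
The plan is to verify the three axioms of $b$-minimality with centres (over $\rve$), in the sense of Cluckers and Loeser \cite{raf2}, directly in the many-sorted structure $(R,\rve)$. The main axiom (b1) asks, given any definable $X\subseteq R\times A$ with $A$ a product of $\rve$-sorts, for a definable map $\lambda:X\to A'$ with $A'$ again auxiliary, such that each fibre in the $R$-direction is either a singleton or an open valuative ball $\{x\in R:\val(x-c)>\gamma\}$, with the centre $c$ and radius $\gamma$ given by definable functions on $A'$. Axiom (b2) requires that any definable function from an auxiliary product to $R$ has finite image, and (b3) bounds uniformly the cardinality of definable finite subsets of $R$.

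The starting point for (b1) is o-minimal cell decomposition for $R$ as an $L$-structure together with its valuation-sensitive refinement for $T$-convex fields due to van den Dries and Lewenberg \cite{driesI} and further developed by Yin in \cite{yin}. Given such an $X$, cell decomposition allows one to partition each fibre $X_{\bar q}\subseteq R$ into finitely many points and open intervals. To subdivide each open interval into open balls, I would partition $x$ according to the fibres of $x\mapsto \rvo(x-c)$, where $c$ ranges over $\rve$-definable candidates such as endpoints of the interval or critical points of the finitely many definable functions entering the description of $X$. Power boundedness then ensures that this iterative partitioning terminates after finitely many steps, yielding a partition into balls whose centres and radii are $\rve$-definable.

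Axiom (b2) reduces to the observation that a definable function from an auxiliary sort to $R$ takes values in the definable closure of its parameters, which in a single free $R$-variable is finite by o-minimality of $T$; axiom (b3) is immediate from the same source. The technical heart, and expected main obstacle, lies in (b1): one must ensure uniformity of the ball partition in the auxiliary parameters, and in particular that the centres and radii can be coded in $\rve$ rather than requiring ambient $R$-parameters. This is where power boundedness becomes essential, via the growth dichotomy of \cite{miller2}, which forces definable one-variable functions in $T$ to be asymptotically power-like, so that $\rvo\circ(\cdot-c)$ captures the relevant valuative information. Without this hypothesis a definable exponential would produce partitions whose radii depend intrinsically on the $R$-variable and cannot be pushed down to $\rve$, paralleling the obstruction to t-stratifications highlighted in Subsection~\ref{notstrats}.
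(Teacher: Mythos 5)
Your overall strategy---verifying the three axioms directly, with the one-variable structure theory of $T$-convex fields doing the work for ($b_1'$)---matches the paper's, but each of your three verifications skips the lemma that actually carries the load. For ($b_1'$), the paper does not iterate a partition until it ``terminates''; it first proves (Proposition~\ref{tstrat-1}, via quantifier elimination for $\trv$ and hence the Wilkie inequality---this is where power boundedness enters, not through any termination argument) that a one-variable definable set is cut out by conditions on $\rvo(x-c_0),\dots,\rvo(x-c_m)$ for finitely many field points $c_i$, and then takes $\chi(x)=\ulcorner \rvo(x-S_0)\urcorner$ with $S_0=\{c_0,\dots,c_m\}$ and $c(q)$ the point of $S_0$ closest in valuation. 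The difficulty you do not address is that a definable subset of $R$ is a finite union of points, intervals \emph{and} v-discs; a v-disc has no endpoints and no ``critical points,'' and producing a definable field point near each of its cuts requires Proposition~\ref{defpointclosedball} (every definable closed ball contains a definable point), which rests on the immediate-automorphism machinery. Moreover the $c_i$ are a priori definable only over the $\rvu$-parameters as well as the field parameters, and Lemma~\ref{lemma3} is needed to discharge the former so that the centres live where the axiom demands.

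For ($b_2$) your reduction to ``the values lie in the definable closure of the parameters, which is finite by o-minimality'' begs the question at exactly the delicate spot: for $p:\Xi\lto R$ with $\Xi\subseteq\rvu$, the value $p(\xi)$ is definable over the parameters \emph{together with} $\xi$, and the entire content of the axiom is that the RV-parameter $\xi$ contributes no new field points. That is Lemma~\ref{lemma3}, proved via the exchange principle and compactness, not via o-minimality of $T$ alone. Finally, you misstate ($b_3$): it is not a uniform bound on the size of finite definable sets, but the requirement that every definable $f:X\lto R$ admit a definable $\chi:X\lto\rve$ on whose fibres $f$ is injective or constant; this follows from Yin's monotonicity theorem (Proposition~\ref{sharperwomono}) together with a stable-embeddedness argument to remove auxiliary $\rvu$-parameters from $\chi$, and is not ``immediate.'' As written, the proposal is an outline in which the three essential inputs---Proposition~\ref{defpointclosedball}, Lemma~\ref{lemma3}, and Proposition~\ref{sharperwomono}---are missing.
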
  

The last section of the paper starts by presenting an application foreseen in~\cite[Section 4]{egr}. It describes how t-stratifications of a set induce t-stratifications on tangent cones of that set. We then move onto prove that the t-stratifications obtained from Theorem~\ref{main3} can be refined by t-stratifications with $L$-definable strata. This is used in our second application, in which we work with stratifications in an archimedean setting. We regard the real field $\mathbb R$ as an $L$-structure. A non-principal ultrapower $^*\mathbb R$ of $\mathbb R$ can naturally be made into a $T$-convex field. We then look at the stratifications induced on $\mathbb R$ by the t-stratifications in $^*\mathbb R$. By guaranteeing that the results in~\cite[Section 7]{immi} hold in our context, we establish the following. 

\begin{Theorem}\label{main4}
Let $X\subseteq R^n$ be $L$-definable and $S_0,\dots ,S_n$ be an $L$-definable partition of $X$. If the sets ${}^{*}S_0,\dots ,{}^{*}S_n$ form a t-stratification of ${}^*X$, then the sets $S_0,\dots ,S_n$ form a  $C^1$-Whitney stratification of $X$. 
\end{Theorem}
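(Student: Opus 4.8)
The plan is to transfer the t-stratification property from ${}^*X$ down to $X$ by a careful analysis of which first-order conditions defining a t-stratification are preserved under the ultrapower correspondence $S \mapsto {}^*S$, and then to match these against the defining conditions of a $C^1$-Whitney stratification. First I would recall precisely what it means for ${}^*S_0, \dots, {}^*S_n$ to be a t-stratification of ${}^*X$: this is a condition involving, at each point, the existence of a direction space (a maximal subspace in which the stratification is translation-invariant up to the relevant valuative error) whose dimension is at most the index of the stratum containing the point, together with the requirement that lower strata be ``reflected'' in the local behaviour. The key observation is that each of these is a condition about the geometry of ${}^*X$ at infinitesimal scale around points of ${}^*S_i$, and that for $L$-definable data the transfer principle lets us rephrase such infinitesimal conditions as archimedean statements about $X$ near points of $S_i$ — in particular, the tangent space of $X$ along $S_i$ gets identified with (the standard part of) the relevant direction space.

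The main steps would be as follows. Step one: use the $L$-definability of the $S_i$ and $\L o\'s$'s theorem to reduce the t-stratification hypothesis on ${}^*X$ to a family of $L$-formulas holding in $\real$, after first invoking the refinement result already announced in the paper (that t-stratifications from Theorem~\ref{main3} can be refined to have $L$-definable strata) to know we are in the situation where everything in sight is $L$-definable and the translation-invariance-up-to-error statements descend cleanly. Step two: verify that the local triviality / translation-invariance clause of the t-stratification at scale governed by $\rvu$ forces, upon taking standard parts, that $X$ is a $C^1$-manifold near each point of each $S_i$ with $S_i$ itself a $C^1$-submanifold — this is essentially where the smoothness half of ``$C^1$-Whitney'' comes from, and it should follow by importing the arguments of~\cite[Section 7]{immi} that the paper promises hold here. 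Step three: extract the Whitney (a) and (b) regularity conditions: the statement that the direction space at a point $x \in S_i$ contains (the tangent directions to) every higher stratum whose closure meets $S_i$ near $x$ translates, via standard parts of secant lines and tangent planes, exactly into Whitney's condition on the limiting behaviour of tangent planes and secants. Step four: assemble these into the conclusion that $S_0, \dots, S_n$ is a $C^1$-Whitney stratification of $X$.

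The hard part will be Step two and Step three, specifically the passage between the valuative ``error term is in a strictly larger valuative ball'' formulation of t-stratifications and the classical $\epsilon$–$\delta$ formulation of Whitney regularity. The subtlety is that a single t-stratification condition in ${}^*X$ is a statement about \emph{all} infinitesimals simultaneously, whereas a Whitney condition is a limiting statement along sequences tending to a point; bridging these requires an argument in the spirit of standard-part / saturation arguments — one must show that the existence of a uniform valuative bound in the ultrapower is equivalent to the existence, for each archimedean $\epsilon$, of an archimedean $\delta$, using that $\real$ is the fixed-point set / residue-type reduct sitting inside ${}^*\real$ and that the convex subring $O_{{}^*\real}$ consists exactly of the finite elements with the infinitesimals as maximal ideal. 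I expect the cleanest route is to fix an arbitrary point $p \in S_i$ and a higher stratum $S_j$, pull the t-stratification statement back to an $L$-formula $\phi(\epsilon)$ asserting ``for the ball of valuative radius $\va(\epsilon)$ around $p$, the secant-versus-tangent discrepancy is valuatively smaller than $\epsilon$'', note this holds in ${}^*\real$ for all infinitesimal $\epsilon$, and then deduce by transfer (or by a direct compactness argument on the $L$-definable family) that it holds in $\real$ for all sufficiently small standard $\epsilon$, which is precisely Whitney (b); Whitney (a) is the analogous but easier statement about tangent planes alone, and $C^1$-ness of the strata comes out of the same package. Finally I would remark that the hypothesis that the partition is $L$-definable (not merely definable in $(R,\rve)$) is exactly what makes $\L o\'s$'s theorem and the transfer principle applicable, and is therefore essential to the argument.
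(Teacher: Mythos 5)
Your high-level plan is in the spirit of what Halupczok does in Section~7 of~\cite{immi}, but it diverges from the paper's actual proof in a significant way: the paper does not re-derive the passage from t-stratifications to Whitney stratifications at all. Its entire argument is a verification that the hypotheses of~\cite[Proposition 7.10]{immi} hold in the present setting. Concretely, one observes (via~\cite[Proposition 5.8]{driesI}) that the value group $\Gamma$ and the residue field $\overline R$ are orthogonal when $T$ is power bounded; this is exactly~\cite[Hypothesis 7.1]{immi}, and once it holds,~\cite[Corollary 7.6]{immi} and~\cite[Proposition 7.10]{immi} apply after replacing ``semi-algebraic'' by ``$L$-definable''. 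The last bit (passing from a Whitney stratification of $\mathbb R^n$ to one of $X$) is handled as in the second half of~\cite[Theorem 7.11]{immi}. You never mention orthogonality of $\Gamma$ and $\overline R$, yet that is the single prerequisite the paper has to check and is precisely what power boundedness buys here; any rederivation of the ``hard part'' you flag in Steps 2--3 would have to invoke it, because without it the direction/tangent-space bookkeeping in the residue field does not descend cleanly to $\mathbb R^n$.

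Two further points. First, Step~1 as stated cannot work: the property ``$({}^{*}S_i)_{i\le n}$ is a t-stratification of ${}^*X$'' is not a first-order $L$-statement about ${}^*\mathbb R$ --- it quantifies over the RV and value-group sorts and over risometries --- so Łoś's theorem does not directly turn it into a family of $L$-formulas over $\mathbb R$. What one actually does (and what~\cite{immi} does) is the reverse: formulate candidate archimedean $\epsilon$--$\delta$ conditions as $L$-formulas, use Łoś plus the valuative structure of ${}^*\mathbb R$ to show the t-stratification hypothesis forces them, and only then read off Whitney regularity. You gesture at this in the final paragraph, so the correction is more about the phrasing of Step~1 than the underlying idea. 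Second, invoking the refinement result (Theorem~\ref{ldefstrats}) is unnecessary: the hypothesis of Theorem~\ref{main4} already grants that the $S_i$ are $L$-definable, so there is nothing to refine. In short, your outline would, if fully fleshed out, amount to reproving~\cite[Proposition 7.10]{immi}; the paper's proof simply checks its hypotheses and cites it.
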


When $L=\lori$, Theorem~\ref{main4} was proved in~\cite[Section 7.3]{immi}.

A minor third application, reachable by having ensured that t-stratifications \emph{can be made $L$-definable}, is a new proof of the existence of ($C^1$-) Whitney stratifications for $L$-definable subsets of $\mathbb R^n$. This result is well-known and in fact does not require the assumption of power boundedness on $T$, see~\cite{loi}.
\newline

{\bf \noindent Acknowledgements.} This work was carried out as part of the author's PhD programme. The author wishes to thank Prof Immanuel Halupczok and Prof Dugald Macpherson for their supervision and invaluable support. Financially, the author was supported by CONACYT and DGRI-SEP. 

\section{Definability in power bounded $T$-convex fields}\label{definability}
In this section we collect several results on definability. Most of these results are technical and are fundamental to our later work.

We start by giving more details on the languages used in this paper to study $T$-convex fields. The first language comes from the foundational work in~\cite{driesII} and~\cite{driesI} and corresponds to adding to $L$ a unary predicate to be  interpreted as a $T$-convex subring. This language is denoted by $\lcon$, and the common $\lcon$-theory of pairs $(R,O)$, with $O\subseteq R$ a $T$-convex subring of $R$, will be denoted as $\tcon$. Models of $\tcon$ will be denoted as pairs $(R,O_R)$.

Keeping the notation above, the second language corresponds to adding a new sort for $\rvu:=R^\times/(1+M_R)\cup \{0\}$, where $M_R$ is the maximal ideal of $O_R$. On this new sort we put the language of ordered groups $L_{og}:=\{\ast, 1,0,<\}$. The operation $\ast$ has the obvious interpretation on $R^\times/(1+M_R)$ and is extended by putting $0_{\scriptsize \rvu}\ast \xi=0_{\scriptsize \rvu}$, for each $\xi\in R^\times/(1+M_R)$. The canonical map $\rvo:R\lto \rvu$, extended by $\rvo(0)=0_\subrv$, is the only symbol connecting the two sorts. The order on $\rvu$ is given such that $\xi<_\subrv \xi'$ if and only if $\rvo^{-1}(\xi)<\rvo^{-1}(\xi')$, for all $\xi,\xi'\in \rvu$. This two-sorted language will be denoted by $\lrv$. This approach is exploited in~\cite{yin}, and we will in fact refer to that paper frequently. The common $\lrv$-theory of $T$-convex fields is denoted by $\trv$. 

For the language $\lrv$, we may assume that on the sort $\rvu$ we have a predicate for the residue field $\overline R$ of $R$. Incidentally, the residue map from $O_R$ to $\overline R$ is denoted by $\res$ and it is extended to $O_R^n$ coordinate-wise. In addition, we think of the value group $\Gamma_{\infty}$ as another sort and we consider both the valuation map on $R$, $\val:R\lto \Gamma_{\infty}$ and the valuation map on $\rvu$, $\vrv:\rvu\lto \Gamma_{\infty}$ as part of the language; by the way, these two maps are naturally extended to $R^n$ (see the introduction). These assumptions are no more than convenient.

Finally, the third language enriches $\lrv$ by considering all imaginary sorts on the sort $\rvu$, this is, we let $\rve$ be the union of all quotients of $\rvu$ by 0-$\lrv$-definable equivalence relations. For every such quotient, we add the associated canonical map from $\rvu$ into it, and also any other natural map from $R$ into it. The  language will be denoted by $\lhen$.  For example, on $R^n$ we define the relation $x\sim y$ if and only if either $\val(x-x')>\val(x)$ or $x=x'=0$.  The quotient is denoted by $\rvu^{(n)}$ and the natural map $R^n\lto \rvu^{(n)}$ will be denoted ---under an innocuous abuse of notation--- by $\rvo$. All the quotients $\rvu^{(n)}$ are then part of $\rve$ and the map(s) $\rvo$ is added to the language. By $\then$ we denote the common $\lhen$-theory of $T$-convex fields. 

We remind the reader that throughout this paper we assume that $T$ is power bounded, with the sole exception of Subsection~\ref{notstrats}. We now cite a few results on $\lrv$-definability. For us, \emph{definable} means  definable with parameters. Nevertheless, we tend to be thoroughly precise on the parameters used. For instance, we will frequently write \emph{$X$ is $A$-$\lrv$-definable} to mean that $X$ is definable by an $\lrv$-formula with parameters strictly from $A$. 

\begin{Theorem}\label{qe}
The theory $\trv$ admits quantifier elimination. 
\end{Theorem}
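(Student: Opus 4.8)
The plan is to deduce quantifier elimination for $\trv$ from the known quantifier elimination for $\tcon$ in the one-sorted language $\lcon$, established by van den Dries and Lewenberg in~\cite{driesII}~\cite{driesI}. The two-sorted structure $(R,\rvu)$ is, after all, an interpretable enrichment of $(R,O_R)$: the sort $\rvu$ together with $\ast$, its order, the residue field predicate and the map $\rvo$ are all interpretable in $(R,O_R)$, and conversely the predicate $O_R$ is quantifier-free definable in $(R,\rvu)$ via $x\in O_R \iff \rvo(x)\leq_{\scriptsize\rvu} 1 \vee x=0$. So the content to be checked is that quantifiers ranging over the new sort $\rvu$ can be eliminated, and that mixed formulas reduce appropriately.

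First I would fix notation for a standard back-and-forth / embedding test for quantifier elimination: it suffices to show that whenever $(R,\rvu)$ and $(R',\rvu')$ are models of $\trv$ with $(R',\rvu')$ sufficiently saturated, $A=(A_R,A_{\rvu})$ is a common substructure (in the two-sorted $\lrv$-language, so $A_R$ is closed under the $0$-definable continuous functions and $\rvo(A_R)\subseteq A_{\rvu}$), and $\sigma:A\to (R',\rvu')$ is an $\lrv$-embedding, then $\sigma$ extends to an elementary map. I would handle this sort by sort. For extending along an element of $R$: given $a\in R$, one uses the $\lcon$-quantifier elimination of van den Dries--Lewenberg, which tells us that the $\lcon$-type of $a$ over the real-closed $T$-convex subfield generated by $A_R$ is determined by quantifier-free $\lcon$-data, i.e.\ by the cut of $a$ and the valuation-theoretic data — and the latter is precisely encoded by $\rvo$-values, hence by quantifier-free $\lrv$-data over $A$. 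For extending along an element $\xi\in\rvu$: since $\rvo:R^\times\to\rvu\setminus\{0\}$ is surjective, any new element of the $\rvu$-sort is $\rvo(b)$ for some $b\in R$; one extends first within the $R$-sort by adding such a $b$ (using the previous case) and then observes that $\xi$ lands in the image. The crucial point is that an extension of the $\rvu$-part can always be realized by an extension of the $R$-part, so no independent quantifier elimination over $\rvu$ alone is needed beyond the (already well-understood) ordered abelian group structure on the value group / the o-minimal residue field, both of which are stably embedded and eliminate quantifiers in their own right.

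The main obstacle, as usual in such relative quantifier elimination arguments for valued fields, is the \textbf{immediate / ramified extension case}: when the new element $b\in R$ generates neither a new residue field element nor a new value-group element over $A_R$, so that $\rvo(b-c)$ realizes a new cut in $\rvu$ "from inside" a ball with all $A$-points having the same $\rvo$-value pattern. Here one must use power boundedness of $T$ in an essential way — it guarantees (via the Wilkie--Miller dichotomy and the structure theory of van den Dries--Lewenberg for power bounded $T$) that the valuation is "tame" enough that the quantifier-free $\lrv$-type still controls the $\lcon$-type; concretely, one shows that the set of $\rvo$-values $\{\rvo(b-c): c\in A_R\}$, together with the cut of $b$, already pins down $\tp(b/A_R)$ in $\tcon$. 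I expect this is where the real work lies, and where one leans on Yin's analysis in~\cite{yin}; the rest is bookkeeping to package the two-sorted data. Once the back-and-forth is complete, quantifier elimination for $\trv$ follows by the standard criterion.
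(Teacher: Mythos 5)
Your route matches the paper's: the paper simply cites Yin's Theorem 1.8 and summarizes it as reducing to $\tcon$ quantifier elimination and invoking the Wilkie inequality, and your back-and-forth reduction to $\lcon$-QE with power boundedness doing the work in the immediate-extension step is exactly that strategy in outline. Two slips, neither fatal to the sketch. First, the specific power-boundedness input is the \emph{Wilkie inequality} (the Abhyankar-type rank inequality for $T$-convex field extensions, see \cite[\S 5]{driesII}), not the Wilkie--Miller dichotomy; the dichotomy only explains why non-power-bounded $T$ must be excluded and does no work inside the back-and-forth, whereas the inequality is what controls the growth of the residue field and value group when a new field element is adjoined. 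Second, your proposed quantifier-free $\lrv$-definition of $O_R$ is incorrect: with the paper's ordering on $\rvu$ (where $\xi <_{\rvu} \xi'$ iff $\rvo^{-1}(\xi) < \rvo^{-1}(\xi')$), the condition $\rvo(x) \leq_{\rvu} 1$ says, for positive $x$, roughly that $x \lesssim 1$, and already fails for $x = 2 \in O_R$; the correct quantifier-free definition goes through the valuation $\vrv$ on $\rvu$, which the paper explicitly includes in $\lrv$, for instance $x\in O_R \iff \vrv(\rvo(x))\geq 0$.
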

\begin{proof}
 \cite[Theorem 1.8]{yin}; the proof reduces the result to the quantifier elimination of $\tcon$ and employs the \emph{Wilkie inequality}. The latter is only available when $T$ is power bounded, see~\cite[\S 5]{driesII}.
\end{proof}

It follows that each $\lrv$-formula $\phi(x)$, with $x$ a single field variable, is equivalent to a boolean combination of formulas of the form 
\begin{align}\label{eq:3.1}
&x=a,& & x<a,& &\rvo(x-a)=\rvo(b), & & \rvo(x-a)< \rvo(b),& &    
\end{align}
for some $a,b\in R$.
Therefore, each $\lrv$-definable subset of $R$ is a finite union of singletons, open intervals and v\emph{-discs}\index{v-disc}, where a v-disc is a set of the form $\{x\in R\ |\ \xi_1\square_1\rvo(x-a)\square_2 \xi_2\}$, with $a\in R$ and $\xi_i\in \rvu$, and $\square_i\in \{\leq,< \}$ ($i=1,2$). This property of $\lrv$-definable subsets of $R$ is only available when $T$ is power bounded, see~\cite[Observation 7.3]{driesII}.

Notice that in~\eqref{eq:3.1} the first two formulas are redundant: $x=a$ can be replaced by $\rvo(x-a)=0_\subrv$, and $x<a$ by $\rvo(x-a)<0_\subrv$. The similar observation can be made about multi-variate formulas. We therefore adhere to the following convention.
\begin{Convention}\label{rvterms}
	Let $x$ be a tuple of field variables. We will assume that each $L$-term $t(x)$ appearing in an $\lrv$-formula occurs always under the scope of an instance of $\rvo$. So each $\lrv$-formula with free field variables in $x$ has the form 
	\begin{equation*}
	\phi(\rvo(t_1( x, a)),\dots,\rvo(t_m( x, a)), z, \eta),
	\end{equation*} 
	where $\phi$ is a quantifier-free formula in the language of the sort $\rvu$, $z$ is a tuple of $\rvu$ variables, each $t_i$ is an $L$-term and $ a$ and $ \eta$ are tuples of elements in $R$ and $\rvu$, respectively.
\end{Convention}

 An interval is a set of the form $\{x\in R\ |\ a\square_1 x\square_2 b\}$, with $\square_i\in \{<,\leq\}$ for each $i=1,2$. We use standard notation for these sets. Note that singletons are considered as intervals. The following proposition is among the first results on $\lrv$-definability; it is a sharper version of~\cite[Corollary 2.8]{driesI}, and it was proved first by Y. Yin.

\begin{Proposition}\label{sharperwomono}
  Let $A$ be a subset of $R\cup \rvu$. Suppose that $C\subseteq R$ and that $f:C\lto R$ is an $A$-$\lrv$-definable. Then there is an $A$-$\lrv$-definable finite partition $C_1,\dots, C_m$ of $C$, with each $C_j$ either an interval or a $\val$-disc, and $L$-definable functions $f_1,\dots,f_m:R\lto R$ such that for each $j\in \{1,\dots,m\}$, $f|_{C_j}=f_{j}|_{C_j}$. Moreover, each function $f_j$ can be assumed to be continuous, constant or strictly monotone, and even, if desired, differentiable on $C_j$.
\end{Proposition}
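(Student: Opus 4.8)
The plan is to reduce the statement to the analysis of definable functions in $T$-convex fields already available in the one-sorted language $\lcon$, and then to post-process using the o-minimality of $T$. First, by Theorem~\ref{qe} and Convention~\ref{rvterms}, the graph of $f$ in $R^2$ is defined by a quantifier-free $\lrv$-formula which is a Boolean combination of conditions on the $\rvu$-values $\rvo(t_i(x,y,a))$ of finitely many $L$-terms. Since $\rvo$ is surjective, every $\rvu$-parameter occurring is of the form $\rvo(c)$ with $c\in R$, and by multiplicativity of $\rvo$ each such condition is equivalent to one expressible by the valuation $\val$ (for instance $\rvo(s)=\rvo(t)$ is $\val(s-t)>\val(t)$, $\rvo(s)<\rvo(t)$ is a $\val$-inequality together with a sign condition, and the residue-field and value-group predicates pull back to $\val$-conditions). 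Hence the graph of $f$ is $\lcon$-definable over finitely many parameters from $R$, so $f$ is an $\lcon$-definable function, and I would apply \cite[Corollary 2.8]{driesI} --- in the sharper form due to Yin \cite{yin} --- to obtain an $\lcon$-definable finite partition of $C$ together with $L$-definable functions agreeing with $f$ on each piece. Alternatively, this core step can be reproved from the relative quantifier elimination for $\tcon$ of van den Dries and Lewenberg: the graph becomes ``$y=f(x)$'' described by an $L$-formula together with finitely many conditions ``$t_i(x,y,a)\in O_R$''; by $L$-cell decomposition in $T$, for each $x$ the $L$-part confines $y=f(x)$ to finitely many $L$-definable branches $g_1,\dots,g_k\colon R\lto R$, and the $\lcon$-definable sets $\{x\in C : f(x)=g_j(x)\}$ partition $C$ as desired.

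Next I would refine the partition so that every piece is a singleton, an interval or a $\val$-disc. This is the second place where power boundedness is indispensable (the first being via the Wilkie inequality behind Theorem~\ref{qe}): by \cite[Observation 7.3]{driesII}, the classification underlying the discussion after~\eqref{eq:3.1}, every $\lcon$-definable (equivalently, $\lrv$-definable) subset of $R$ is a finite union of singletons, open intervals and $\val$-discs, and such a union can be disjointified into a finite partition of the same kind. Applying this to each piece of the partition of $C$ produces the required partition, with $f$ still agreeing with an $L$-definable function $f_j$ on each new piece.

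For the ``moreover'' clause I would fix a piece $C_j$ and the associated $L$-definable function $f_j\colon R\lto R$ and appeal to the monotonicity theorem and the $C^1$-cell decomposition in the o-minimal theory $T$: there is a finite $L$-definable partition of $R$ into intervals on which $f_j$ is continuous, is differentiable off a finite set, and is either constant or strictly monotone. Intersecting $C_j$ with this $L$-definable partition and invoking \cite[Observation 7.3]{driesII} once more to split each intersection into singletons, intervals and $\val$-discs yields a refinement of the partition of $C$ on which $f$ agrees with an $L$-definable function that is continuous, differentiable, and constant or strictly monotone on the piece in question. The genuinely delicate step is the core reduction of the first paragraph --- controlling how the $T$-convexity ($\rvo$-)conditions cutting out the graph of $f$ force it onto finitely many $L$-definable branches --- which is exactly the content of \cite[Corollary 2.8]{driesI} and its sharpening by Yin; the remaining steps are the standard o-minimal geometry of $T$ together with the power-bounded classification of definable subsets of $R$, and amount to routine bookkeeping.
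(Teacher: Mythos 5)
The paper's own proof is a bare citation of Yin (\cite[Lemma 2.1 and Corollary 2.2]{yin}), so there is no detailed argument in the paper to compare against. Your proposal reconstructs a plausible route to the result and correctly identifies the two places where power boundedness enters (the Wilkie inequality behind Theorem~\ref{qe}, and \cite[Observation 7.3]{driesII} giving the singleton/interval/$\val$-disc classification of definable subsets of $R$); the ``moreover'' clause via o-minimal monotonicity and $C^1$-cell decomposition is also the right move. So in spirit you are doing what the paper does, just unpacking the citation.

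One point you gloss over, and which the paper cares about (it is explicit about ``keeping a tight control on the use of parameters''), is what happens to $A$-definability when you detour through $\lcon$. Your first step replaces each $\rvu$-parameter $\eta\in A$ by a representative $c\in\rvo^{-1}(\eta)$, and the partition you then extract from \cite[Corollary 2.8]{driesI} is a priori only $\lcon$-definable over $A\cup\{c\}$, hence only $(A\cup\{c\})$-$\lrv$-definable after translating back; the statement requires $A$-$\lrv$-definability, and it is not automatic that the partition produced by the cited corollary depends on $c$ only through $\rvo(c)=\eta$. This is exactly the kind of bookkeeping that Yin's $\lrv$-version handles directly (which is presumably why the paper cites Yin rather than \cite{driesI}); if you really wanted to run the $\lcon$ route you would need an extra descent step (e.g.\ a canonical choice of partition, or averaging over automorphisms of a monster fixing $A$ pointwise) to recover $A$-definability. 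Relatedly, your paragraph wavers between citing \cite[Corollary 2.8]{driesI} after the $\lcon$-reduction and citing ``Yin's sharper form'' --- but the latter is already an $\lrv$-statement with the right parameter control and the right shape of pieces, so if you invoke it the $\lcon$-reduction and the parameter worry both evaporate. Settling on one of the two routes and, in the $\lcon$ case, addressing the descent, would close the remaining gap.
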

\begin{proof}
\cite[Lemma 2.1 and Corollary 2.2]{yin}. 
\end{proof}

\begin{Remark}
1. Under the conclusion of the proposition above, for each $j\in \{1,\dots, m\}$, the continuity and monotonocity of $f$ on $C_j$ imply that $f(C_j)$ is either an interval or a v-disc.

2. The following multi-variate version of Proposition~\ref{sharperwomono} can be proved via induction and compactness: If $C\subseteq R^n$ and $f:C\lto R$  is an $A$-$\lrv$-definable function, then there is an $A$-$\lrv$-definable finite partition $C_1,\dots,C_m$ of $C$ and $L$-definable functions $f_1,\dots, f_m:R^n\lto R$ such that $f|_{C_j}=f_j|_{C_j}$ for each $j\in \{1,\dots ,m\}$. 
\end{Remark}

\begin{Corollary}\label{JP0lrv}
Let $A$ be a subset of $R\cup \rvu$. Suppose that  $X\subseteq R$ and that $f:X\lto R$ is an $A$-$\lrv$-definable function. Then there is an $A$-$\lrv$-definable map $\chi:R\lto \rvu$ such that for each $q\in \chi(R)$, $f|_{\chi^{-1}(q)}$ is either constant or injective. 
\end{Corollary}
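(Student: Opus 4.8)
The plan is to reduce Corollary~\ref{JP0lrv} to the multi-variate version of Proposition~\ref{sharperwomono} (Remark~2 above), applied in the single-variable case, and then to package the resulting data into a single definable map $\chi:R\lto\rvu$. First I would apply Proposition~\ref{sharperwomono} to $f:X\lto R$ to obtain an $A$-$\lrv$-definable finite partition $C_1,\dots,C_m$ of $X$, with each $C_j$ an interval or a $\val$-disc, and $L$-definable functions $f_1,\dots,f_m:R\lto R$, each of which may be taken to be constant or strictly monotone (hence injective) on $C_j$, with $f|_{C_j}=f_j|_{C_j}$. On each piece $C_j$, then, $f$ is already either constant or injective; the only remaining work is to \emph{separate the pieces} from one another using a single $\rvu$-valued function, in such a way that no two fibres of $\chi$ overlap two distinct $C_j$'s, and so that the injective pieces are further subdivided into fibres on which $f$ is globally injective on the union of those fibres (here one must be slightly careful, since $f$ injective on each of $C_1,\dots,C_m$ separately does not imply $f$ injective on their union).

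The key step is therefore to build $\chi$ coordinate by coordinate from the defining data of the $C_j$. By Convention~\ref{rvterms} and the quantifier elimination of Theorem~\ref{qe}, each $C_j$ is a boolean combination of conditions of the form $\rvo(x-a)\square\,\eta$; collecting all the parameters $a_1,\dots,a_k\in R$ and $\eta_1,\dots,\eta_l\in\rvu$ occurring, the map $x\mapsto(\rvo(x-a_1),\dots,\rvo(x-a_k))$ is $A$-$\lrv$-definable into $\rvu^k$, and its fibres refine the partition $\{C_1,\dots,C_m\}$. On each such fibre, $f$ restricts to one of the $f_j$, so $f$ is constant or strictly monotone there; for the strictly monotone (injective) ones I would further refine by recording $\rvo(f(x)-c)$ for suitable parameters $c$ chosen from the images $f_j(C_j)$ (which are intervals or v-discs by Remark~1 after Proposition~\ref{sharperwomono}), so that fibres of $\chi$ land in pairwise disjoint regions of $R$ under $f$ — guaranteeing genuine injectivity of $f$ on each fibre. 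Finally I would compose this $\rvu^k$-valued map with a definable injection $\rvu^k\hookrightarrow\rvu$ (available since $\rvu$ is an ordered group, or simply by working with $\rvu^{(k)}$-type sorts and noting a single $\rvu$ suffices after coding), to obtain the required $\chi:R\lto\rvu$; alternatively, one states the corollary allowing $\chi$ to take values in a finite power of $\rvu$, which is harmless for later applications.

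The main obstacle I anticipate is the \emph{gluing of injectivity}: ensuring that after refining into finitely many injective pieces, one can choose the separating parameters $c$ (or the coding into $\rvu$) so that distinct fibres are sent by $f$ to disjoint sets, rather than merely having $f$ injective on each fibre in isolation. This is where one genuinely uses that the images $f_j(C_j)$ are intervals or v-discs: finitely many such sets in $R$ can be separated by finitely many $\rvo(\,\cdot-c)$-conditions, so a finite refinement of the partition achieves global injectivity on each fibre. A secondary, purely bookkeeping point is to verify that all parameters used (the $a_i$, $\eta_i$, and $c$) lie in $A\cup\dcl(A)$, so that $\chi$ is genuinely $A$-$\lrv$-definable; this follows because Proposition~\ref{sharperwomono} already yields an $A$-definable partition and $A$-definable data, and the extra parameters $c$ can be taken as $A$-definable endpoints of the $A$-definable sets $f_j(C_j)$.
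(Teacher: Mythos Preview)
Your proposal is correct in spirit but substantially overcomplicated, and the ``main obstacle'' you identify is based on a misreading of the statement. The corollary only requires that $f|_{\chi^{-1}(q)}$ be constant or injective \emph{for each fibre separately}; it does \emph{not} ask that $f$ be injective on the union of all the injective fibres. So once Proposition~\ref{sharperwomono} gives you the finite $A$-$\lrv$-definable partition $C_1,\dots,C_m$ with $f|_{C_j}$ constant or strictly monotone, you are already done as soon as you build a $\chi$ whose fibres are exactly the $C_j$. Your entire third paragraph about ``gluing of injectivity'' and refining by $\rvo(f(x)-c)$ is addressing a stronger conclusion than the one stated.

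The paper's proof exploits this and is one line: set $\chi(x):=\rvo(j)$ whenever $x\in C_j$. Since $1,\dots,m$ are units with pairwise distinct residues, the values $\rvo(1),\dots,\rvo(m)$ are distinct elements of $\rvu$, so the fibres of $\chi$ are precisely $C_1,\dots,C_m$; and $\chi$ is $A$-$\lrv$-definable because the partition is. This sidesteps entirely your detour through $(\rvo(x-a_1),\dots,\rvo(x-a_k))$ and the attendant question of coding $\rvu^k$ into $\rvu$, which you left somewhat unresolved. Your approach would also work (at least with target $\rvu^k$), but the paper's index trick is the natural shortcut you missed: a finite $A$-definable partition can always be recorded by a map into any infinite definable set, and $\rvo(\{1,\dots,m\})\subseteq\rvu$ does the job.
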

\begin{proof}
 By Proposition~\ref{sharperwomono}, there is a finite $\lrv$-definable partition $C_1,\dots,C_m$ such that $f|_{C_j}$ is either constant or strictly monotone for each $1\leq j\leq m$. It is enough to put $\chi(x):=\rvo(j)\in \rvu$, whenever $x\in C_j$.
\end{proof}

The following proposition implies, among other consequences, that in both the field-sort and the $\rvu$-sort we have a working reasonable dimension theory. This fact is used freely later. 
\begin{Proposition} Let $(R,O_R)$ be a model of $\trv$. Then,
 \begin{enumerate}
  \item[\textup{(a)}] the exchange principle holds in the field-sort $R$, i.e. for any tuples $a,b$ of elements of $R$, $b\in \acl(a)\setminus \acl(\varnothing)$ implies that $a\in \acl(b)$;
  \item[\textup{(b)}] the exchange principle holds in the $\rvu$-sort.
 \end{enumerate}
\end{Proposition}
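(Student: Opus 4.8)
The final Proposition asserts that the exchange principle holds in both the field-sort $R$ and the $\rvu$-sort of a model $(R,O_R)$ of $\trv$.

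The plan is to prove each part by reducing exchange to the corresponding fact in an auxiliary o-minimal structure, using the quantifier elimination of $\trv$ (Theorem~\ref{qe}) and its consequences on one-variable definable sets.

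\textbf{Part (a).} By the usual reductions for closure operators it suffices to take $a,b$ single elements of $R$ and a parameter set $A\subseteq R\cup\rvu$, and to derive $a\in\acl(Ab)$ from $b\in\acl(Aa)\setminus\acl(A)$, all closures computed in $(R,O_R)$. First I would note that a finite $\lrv$-definable subset of $R$ is a finite set of points: by the structure theorem recalled after Theorem~\ref{qe} such a set is a finite union of singletons, intervals and $\val$-discs, and the latter two are infinite. Hence $b\in\acl(Aa)$ means that $\{b\}$ is $Aa$-$\lrv$-definable. Next I would remove the $\rvu$-parameters: working in a monster model, for each $\xi\in A\cap\rvu$ pick $c_\xi\in R$ with $\rvo(c_\xi)=\xi$, chosen generic over all data introduced so far, and set $A':=(A\cap R)\cup\{c_\xi:\xi\in A\cap\rvu\}$. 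By Convention~\ref{rvterms}, quantifier elimination, and Proposition~\ref{sharperwomono}, on the piece of a finite $A'a$-$\lrv$-definable partition of $R$ that contains $a$ the formula isolating $b$ reduces to an $L$-condition over $A'a$, so $b\in\acl_L(A'a)$; genericity of the $c_\xi$ forces $b\notin\acl_L(A')$, for otherwise $b\in\acl(A)$. Exchange in the o-minimal theory $T$ now gives $a\in\acl_L(A'b)\subseteq\acl(A'b)$, and a descent argument---again exploiting the genericity of the $c_\xi$ together with homogeneity of the monster model---brings this down to $a\in\acl(Ab)$.

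\textbf{Part (b).} Here quantifier elimination for $\trv$ describes $\lrv$-definable subsets of powers of $\rvu$ in terms of the residue field $\overline R$ and the value group $\Gamma$, which are orthogonal. The residue field $\overline R$ is a model of $T$ by the work of van den Dries and Lewenberg, hence o-minimal; the value group $\Gamma$ is a divisible ordered abelian group, hence o-minimal as well; so both satisfy exchange. I would then recover exchange in $\rvu$ from these two facts by a direct analysis: treating the value map $\vrv:\rvu\lto\Gamma_{\infty}$ fibrewise, one splits according to whether the value of the element in question is or is not algebraic over the values of the parameters, reduces each case to exchange in $\Gamma$ respectively $\overline R$, and uses orthogonality to see that the value-group and residue-field contributions do not interfere; the element $0_{_{\textup{RV}}}$ lies in $\acl(\varnothing)$ and is harmless.

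The step I expect to be the main obstacle is the parameter bookkeeping in (a): checking that the generically chosen preimages $c_\xi$ create no spurious algebraic dependence, so that both the hypothesis $b\notin\acl(A)$ survives as $b\notin\acl_L(A')$ and the conclusion $a\in\acl_L(A'b)$ descends to $a\in\acl(Ab)$---this is precisely where saturation and homogeneity are used. In (b) the delicate point is the fibrewise gluing of the two exchange properties, that is, ensuring that no hidden definable interaction between $\overline R$ and $\Gamma$ inside $\rvu$ obstructs the reduction.
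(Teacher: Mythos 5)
The paper gives no argument of its own for this Proposition: the proof consists solely of the citation \emph{``The proof of~[Proposition, Yin] works in this context.''} So there is no detailed proof in the paper with which to compare your proposal; what follows is an assessment of the proposal on its own merits.

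Your overall strategy for (a) --- replace the $\rvu$-parameters by generically chosen field preimages $c_\xi$, reduce $\lrv$-algebraicity to $L$-algebraicity via QE (Convention~\ref{rvterms}) and piecewise $L$-definability (Proposition~\ref{sharperwomono}), invoke $L$-exchange, then descend --- is sound in spirit and is, I believe, close to what Yin's cited argument must do. But the step you single out as ``the main obstacle,'' the descent from $a\in\acl_L(A'b)$ back to $a\in\acl(Ab)$, is handled only by a gesture towards ``genericity together with homogeneity of the monster model,'' and homogeneity does not in fact close the gap: an automorphism fixing $Ab$ pointwise and moving $a$ will also move $c_\xi$, so one does not obtain more than one solution of any single $L$-formula with the \emph{same} parameters. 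The descent that actually works is not a homogeneity argument but a \emph{second} application of $L$-exchange: since $a\in\acl_L\bigl((A\cap R)\cup\{b,c_\xi\}\bigr)$ while $c_\xi\notin\acl_L\bigl((A\cap R)\cup\{a,b\}\bigr)$ (this is exactly what the genericity buys), $L$-exchange applied to the partition $\bigl((A\cap R)\cup\{b\},\,c_\xi\bigr)$ forces $a\in\acl_L\bigl((A\cap R)\cup\{b\}\bigr)\subseteq\acl(Ab)$. You should make this explicit; as written, the descent is a genuine gap. A second, smaller point you gloss over is the parameter bookkeeping in your use of Proposition~\ref{sharperwomono}: you need the $L$-definable pieces $f_j$ to be definable over parameters controlled by $A'$, which the statement of Proposition~\ref{sharperwomono} does not spell out and which deserves a sentence of justification. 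Finally, for (b), the reduction to exchange in $\overline R$ and $\Gamma$ via stable embeddedness and orthogonality is the right idea, but ``treating $\vrv$ fibrewise and gluing'' is exactly where the work is: the fibres of $\vrv$ are $\overline R^\times$-torsors rather than copies of $\overline R^\times$, so identifying them with $\overline R^\times$ requires a parameter, and you should verify that the resulting case analysis really does recombine into full exchange in $\rvu$ rather than merely in each factor separately.
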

\begin{proof}
The proof of~\cite[Proposition]{yin2} works in this context.
\end{proof}

The next result is of high importance since we want to keep a tight control on the use of parameters. The result is originally~\cite[Proposition]{yin}.

\begin{Lemma}\label{lemma3}
 Given a tuple $\xi\in \rvu^n$, if $a\in R$ is $\xi$-$\lrv$-definable, then $a$ is $\varnothing$-$\lrv$-definable. 
\end{Lemma}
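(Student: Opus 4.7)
The plan is to combine the quantifier elimination of Theorem~\ref{qe} with the structure theorem for $\lrv$-definable subsets of $R$ stated just after it. Let $\phi(x,\xi)$ be an $\lrv$-formula with $\{x\in R:\phi(x,\xi)\}=\{a\}$. By Theorem~\ref{qe} and Convention~\ref{rvterms}, I may assume $\phi$ is quantifier-free with every $L$-term appearing under $\rvo$; since the parameters in $\phi$ come only from $\rvu$, none of these $L$-terms carries field parameters. Put $\phi$ in disjunctive normal form and retain a disjunct that still defines $\{a\}$; then $\phi(x,\xi)$ may be taken as a conjunction $\bigwedge_{j}\alpha_{j}(x,\xi)$ of atomic or negated atomic formulas of one of the forms
\[
\rvo(t_i(x))\,\square\,\xi_k \quad\text{or}\quad \rvo(t_i(x))\,\square\,\rvo(t_\ell(x)),\qquad \square\in\{=,<\},
\]
with $t_i,t_\ell$ being $\varnothing$-$L$-terms in the single variable $x$.

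For each $j$ put $F_j=\{x:\alpha_j(x,\xi)\}$; by the quoted structure theorem, $F_j$ decomposes as a finite disjoint union $P_j\sqcup U_j$ in which $P_j$ collects the singleton components and $U_j$ is a finite union of open intervals and v-discs. Distributing $\{a\}=\bigcap_{j}F_j$ over these decompositions, every summand has the form $\bigcap_{j}C_j$ with $C_j\in\{P_j,U_j\}$. If every $C_j=U_j$, then for each $j$ I pick the open interval or v-disc component $W_j$ of $U_j$ that contains $a$; the intersection $\bigcap_{j}W_j$ is then itself an open interval or a v-disc containing $a$, and so properly contains $\{a\}$. Therefore some $C_{j_0}$ must equal $P_{j_0}$, forcing $a\in P_{j_0}$.

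It then suffices to show that $P_{j_0}$ is contained in a $\varnothing$-$L$-definable finite subset of $R$; granted this, the o-minimality of $T$ lets one distinguish the elements of such a set via the order on $R$, so $a$ is $\varnothing$-$L$-definable and in particular $\varnothing$-$\lrv$-definable. A case analysis on the type of $\alpha_{j_0}$ shows that the isolated points of $F_{j_0}$ arise only from zeros of, or singular points in the cell decomposition of, the $L$-terms $t_i$ occurring in $\alpha_{j_0}$. For instance, $\rvo(t_i(x))=\xi_k$ contributes isolated points only when $\xi_k=0_\subrv$, and then they lie inside $t_i^{-1}(0)$; the formula $\rvo(t_i(x))=\rvo(t_\ell(x))$ contributes isolated points only at common zeros of $t_i$ and $t_\ell$; atoms with strict inequalities and their negations produce no further isolated points, essentially by the continuity of the $t_i$ together with the clopenness of v-discs in the valuation topology. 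Carrying out this last case analysis cleanly---especially for the strict-inequality atoms, where one must control the interaction between the order and valuation topologies---is the main technical obstacle I anticipate.
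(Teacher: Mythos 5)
Your strategy---use quantifier elimination to argue that, because $\phi$ carries only $\rvu$-parameters, the point $a$ must be trapped in a finite $\varnothing$-$L$-definable set---is genuinely different from the paper's argument, but as written it has a real gap at its key step. The claim that ``the intersection $\bigcap_j W_j$ is then itself an open interval or a v-disc containing $a$, and so properly contains $\{a\}$'' is unjustified and, at the level of generality at which you invoke the structure theorem, false: a v-disc $\{x\ |\ \xi_1\square_1\rvo(x-c)\square_2\xi_2\}$ may contain its centre as an attained endpoint (e.g.\ $\{x\ |\ 0_\subrv\le\rvo(x-c)\le\xi\}$ has minimum $c$), so two infinite v-discs can meet in a single point, and the intersection of an interval with a v-disc need not be of either form. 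Moreover the decomposition $F_j=P_j\sqcup U_j$ is not canonical and its v-disc centres are a priori only $\xi$-definable, which is exactly what you may not assume. The correct skeleton is purely topological: since $\{a\}=\bigcap_jF_j$ has empty interior, $a$ fails to be an interior point (for the order topology) of some $F_{j_0}$; one must then show that the non-interior points of $F_{j_0}$ lying in $F_{j_0}$ form a subset of a finite $\varnothing$-$L$-definable set. That last statement is precisely the case analysis you defer as ``the main technical obstacle'', and it is where all the work lies: besides zeros of the terms, you must account for the finitely many ($\varnothing$-$L$-definable) points where a term fails to be continuous or locally monotone/constant, and for the two-term atoms you need that $\{(y,z)\ |\ \rvo(y)\square\rvo(z)\}$ is open away from the coordinate axes. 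Until this is carried out the proof is incomplete.

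For comparison, the paper's proof avoids quantifier elimination entirely. It first establishes the exchange principle for $\acl$ in the field sort; then, since $\phi(x,\rvo(b))$ defines $a$ for every $b\in\rvo^{-1}(\xi)$, one has $a\in\acl(b)$ for all such $b$, so if $a\notin\acl(\varnothing)$ exchange yields $\rvo^{-1}(\xi)\subseteq\acl(a)$, which contradicts by compactness the fact that $\rvo^{-1}(\xi)$ is infinite. Hence $a\in\acl(\varnothing)$, and the order makes $a$ actually $\varnothing$-definable. Your route, if completed, would amount to reproving by hand in this special case what exchange gives for free; I would recommend either adopting the exchange argument or fully writing out the topological case analysis sketched above.
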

\begin{proof}
 The result follows for $n>1$ from the case $n=1$ via induction. For $n=1$, notice that if $\phi(x,\xi)$ is an $\lrv$-formula defining $a$ ($\neq 0$), then $a$ is also defined by the $\lrv$-formula $\phi(x,\rvo(b))$ for all $b\in \rvo^{-1}(\xi)$, so $a\in \acl(b)$ for each $b\in \rvo^{-1}(\xi)$. If $a\notin \acl(\varnothing)$, the exchange principle (in the field-sort) implies that $b\in \acl(a)$ for all $b\in \acl(a)$. Hence $\rvo^{-1}(\xi)\subseteq \acl(a)$. By compactness we deduce that $\rvo^{-1}(\xi)$ is a finite set, which is absurd. Thus indeed, $a\in \acl(\varnothing)$.
\end{proof}

\begin{Proposition}\label{finiteimage}
Suppose that $\Xi\subseteq \rvu$ and let $p:\Xi\lto R$ be an $\lrv$-definable function. Then $p(\Xi)\subseteq R$ is finite. 
\end{Proposition}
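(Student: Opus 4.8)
The plan is to pull $p$ back along the quotient map $\rvo$ and reduce to Proposition~\ref{sharperwomono}. Set $g:=p\circ\rvo$, a function defined on $\rvo^{-1}(\Xi)\subseteq R$; since $\rvo$ is a symbol of $\lrv$ and $\rvo:R\lto\rvu$ is surjective, $g$ is $\lrv$-definable over the same parameters as $p$ and $g(\rvo^{-1}(\Xi))=p(\Xi)$. The feature that will drive everything is that $g$ is constant on each fibre $\rvo^{-1}(\xi)$ with $\xi\in\Xi$: $g(x)=p(\rvo(x))$ depends on $x$ only through $\rvo(x)$.

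I would then apply Proposition~\ref{sharperwomono} to $g$ (one field variable, $C=\rvo^{-1}(\Xi)$). This produces an $\lrv$-definable finite partition $C_1,\dots,C_m$ of $\rvo^{-1}(\Xi)$ with each $C_j$ an interval or a $\val$-disc, and $L$-definable $f_1,\dots,f_m:R\lto R$ with $g|_{C_j}=f_j|_{C_j}$, each $f_j$ constant or strictly monotone on $C_j$. As $p(\Xi)=g(\rvo^{-1}(\Xi))=\bigcup_{j=1}^{m}f_j(C_j)$, it is enough to prove that each $f_j(C_j)$ is a single point; this even gives $|p(\Xi)|\le m$.

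The only non-formal ingredient, and the step I would single out, is the observation that $\rvo$ is \emph{not} injective on any interval or $\val$-disc $C\subseteq R$ with at least two elements. Such a $C$ contains a nonempty open interval $(u,w)$ (for a $\val$-disc this is because it is order-convex, being a translate of $\rvo^{-1}(I)$ for an order-interval $I\subseteq\rvu$ and $\rvo$ weakly order-preserving). Pick $c\in(u,w)$ with $c\neq 0$, and then $\delta\neq 0$ with $0<\delta<w-c$ and $\val(\delta)>\val(c)$, which is possible because the value group has no greatest element; then $c':=c+\delta$ lies in $(c,w)\subseteq C$, is distinct from $c$, and $\rvo(c')=\rvo(c)$ since $\val(c'-c)=\val(\delta)>\val(c)=\val(c')$. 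Granting this: if some $C_j$ had at least two elements with $f_j$ strictly monotone on it, then choosing $c\neq c'$ in $C_j$ with $\rvo(c)=\rvo(c')=:\xi$ we would get $c,c'\in\rvo^{-1}(\xi)\cap C_j$ and hence $f_j(c)=g(c)=p(\xi)=g(c')=f_j(c')$, contradicting strict monotonicity. So for every $j$ either $C_j$ is a singleton or $f_j$ is constant on $C_j$; in both cases $f_j(C_j)$ is a point, and $p(\Xi)=\bigcup_j f_j(C_j)$ is finite.

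An alternative, shorter route uses Lemma~\ref{lemma3} instead: after absorbing any $\rvu$-parameters of $p$ into a field tuple $b$ (replacing a parameter $\eta$ by some point of $\rvo^{-1}(\eta)$), every value $p(\xi)$ is $b\xi$-$\lrv$-definable, hence $b$-$\lrv$-definable by Lemma~\ref{lemma3} relativized over $b$, so $p(\xi)\in\acl(b)$; thus the $\lrv$-definable set $p(\Xi)\subseteq R$ lies in $\acl(b)$. If it were infinite it would contain an interval or a $\val$-disc, and so an element generic over $b$ by the dimension theory on the field sort, which is absurd. In both approaches the only appeal to power boundedness of $T$ is the description of $\lrv$-definable subsets of $R$ (via Proposition~\ref{sharperwomono}, resp.\ directly).
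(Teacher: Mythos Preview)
Both of your approaches are correct. Your second, shorter route is essentially the paper's own proof: the paper observes that each $p(\xi)$ is $\xi$-$\lrv$-definable, invokes Lemma~\ref{lemma3} to conclude it is definable over the field parameters alone, and then finishes by a compactness argument rather than by invoking a generic element, but the content is the same. Your version is in fact slightly more careful about the parameters of $p$ than the paper's terse write-up.

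Your first approach, on the other hand, is a genuinely different argument. Instead of passing through Lemma~\ref{lemma3} (and hence through the exchange principle in the field sort), you pull $p$ back along $\rvo$ and apply the piecewise monotonicity decomposition of Proposition~\ref{sharperwomono} directly, using only the elementary observation that $\rvo$ identifies two distinct points in any non-degenerate convex subset of $R$. This is more hands-on and yields the explicit bound $|p(\Xi)|\le m$ in terms of the number of pieces; it also makes transparent that the result rests solely on the one-variable structure theory of $\lrv$-definable sets. The paper's route is shorter because it packages all of this into the single appeal to Lemma~\ref{lemma3}, at the price of relying on the exchange principle.
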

\begin{proof}
 Evidently, for each $\xi\in \Xi$, $p(\xi)$ is $\xi$-$\lrv$-definable. By Lemma~\ref{lemma3}, each element of $p(\Xi)$ is then $\varnothing$-$\lrv$-definable. The conclusion follows by compactness.
\end{proof}

\begin{Remark}
The conclusion of Proposition~\ref{finiteimage} remains true for any $\lrv$-definable function $p:\Xi\lto R^n$, where $\Xi\subseteq \rvu^m$, for all $n,m\geq 1$.
\end{Remark}

\begin{Proposition}\label{finiteinjection}
Let $A$ be a subset of $R\cup \rvu$. If $X\subseteq R$ is a finite $A$-$\lrv$-definable set, then there is an $A$-$\lrv$-definable injection $j:X\lto \rvu^k$, for some $k\geq 1$.
\end{Proposition}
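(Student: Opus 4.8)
The plan is to reduce to the case where $X$ is a singleton and then patch the resulting injections together. First I would invoke Proposition~\ref{sharperwomono} (or rather the structure of $\lrv$-definable subsets of $R$ recorded just after Theorem~\ref{qe}): a finite $A$-$\lrv$-definable set $X = \{a_1,\dots,a_r\}$ is a finite union of singletons, so each $a_i$ is definable over $A$ together with finitely many field parameters; more usefully, the whole finite set $X$ is cut out by an $\lrv$-formula with parameters in $A$. Write $A = A_R \cup A_{\rvu}$ with $A_R \subseteq R$ and $A_{\rvu} \subseteq \rvu$. The key observation is that, by Convention~\ref{rvterms}, the field parameters $A_R$ enter the defining formula of $X$ only through terms of the form $\rvo(t(x, \bar c))$ with $\bar c \in A_R$; since $X$ is finite, for each $a_i \in X$ the value $\rvo(t(a_i, \bar c)) \in \rvu$ is determined, and I can use these finitely many $\rvu$-elements as the coordinates of a candidate map $j$.

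More precisely, here is the construction I would carry out. Fix an $\lrv$-formula $\phi(x, \bar c, \bar\eta)$ defining $X$, with $\bar c \in A_R$ and $\bar\eta \in A_{\rvu}$, and by Convention~\ref{rvterms} write it as $\phi(\rvo(t_1(x,\bar c)),\dots,\rvo(t_m(x,\bar c)), \bar\eta)$. Define
\begin{equation*}
 j(x) := (\rvo(t_1(x,\bar c)),\dots,\rvo(t_m(x,\bar c))) \in \rvu^m,
\end{equation*}
which is manifestly $A$-$\lrv$-definable. It remains to see that $j$ restricted to $X$ is injective. Suppose $a, a' \in X$ with $j(a) = j(a')$; I want $a = a'$. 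The point is that $a$ is then definable over $A$ together with the single field element $a'$ (indeed $a$ is the unique solution in a suitable neighbourhood, or one of finitely many, to a formula whose $\rvo$-terms are pinned down by $j(a') = j(a)$), so $a \in \acl(A \cup \{a'\})$; and symmetrically $a' \in \acl(A \cup \{a\})$. If $j$ is not already injective on $X$, I would refine it: partition $X$ further according to the finitely many types over $A$ realised by its elements, or simply note that since $X$ is finite and $A$-definable, one can enlarge $j$ by finitely many coordinates $\rvo(s_\ell(x,\bar c))$ for additional $L$-terms $s_\ell$ witnessing the (finitely many) distinctions among the $a_i$ — the existence of such terms follows because the $a_i$ are pairwise distinct reals and $\rvo$ together with the ordering separates points on any $\val$-disc-free finite set. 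Choosing $k := m + (\text{number of extra coordinates})$ gives the desired injection $j:X \lto \rvu^k$.

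The main obstacle I anticipate is precisely the injectivity step: a priori two distinct points $a, a' \in X$ could have $\rvo(t_i(a,\bar c)) = \rvo(t_i(a',\bar c))$ for every term $t_i$ occurring in the original formula, so the naive $j$ need not separate them. The cleanest way around this is to argue abstractly via Lemma~\ref{lemma3} in contrapositive spirit: if no $A$-$\lrv$-definable map into any $\rvu^k$ separated two given points of $X$, then those two points would have the same $\lrv$-type over $A \cup \rvu$ restricted to the $\rvu$-sorts, and one derives a contradiction with $X$ being finite (any equivalence class of this type-indistinguishability relation that is $A$-definable and contained in $R$ would, being a union of $\rvo$-fibres it cannot distinguish, either be a singleton or infinite — the latter contradicting finiteness of $X$). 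Making this separation argument precise, and in particular bounding $k$ uniformly, is the part that needs care; everything else is a routine application of quantifier elimination (Theorem~\ref{qe}) and Convention~\ref{rvterms}.
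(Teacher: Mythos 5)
Your proposal correctly locates the main difficulty --- the naive map $x \mapsto (\rvo(t_i(x,\bar c)))_i$ built from the terms of a defining formula need not be injective on $X$ --- but it does not resolve it, and the two patches you sketch both have gaps. Appending ``additional $L$-terms $s_\ell(x,\bar c)$ witnessing the distinctions among the $a_i$'' begs the question: there is no reason why such terms, with parameters only from $A$, should exist; the $a_i$ themselves are generally not in $A$, and $\rvo$ together with the ordering certainly does not separate arbitrary pairs of nearby points (e.g.\ $\rvo(1+t)=\rvo(1+2t)$ when $\val(t)>0$). The abstract type-theoretic fallback is also incomplete: the ``type-indistinguishability class'' of $a$ over $A\cup\rvu$ is a subset of $R$, not of $X$, so showing it is infinite does not contradict the finiteness of $X$.

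The paper's argument proceeds quite differently, and the difference is exactly the missing idea. First translate $X=\{b_1,\dots,b_m\}$ by its average (an $A$-definable shift), so that $\sum_i b_i=0$. One may assume $\val$ is constant $\equiv\alpha$ on $X$. If $\rvo$ were also constant on $X$, i.e.\ every difference $b_i-b_j$ had valuation $>\alpha$, then
\begin{equation*}
\alpha=\val(mb_1)=\val\big((m-1)b_1-(b_2+\cdots+b_m)\big)\geq \min_{k\neq 1}\val(b_1-b_k)>\alpha,
\end{equation*}
a contradiction. So after centering, $\rvo$ necessarily separates some pair of points of $X$; hence every fiber $\rvo^{-1}(\xi)\cap X$ is a \emph{proper} subset of $X$, and one inducts on $|X|$, combining the (finitely many, hence uniformly definable) injections on the fibers with the coordinate $\rvo(x)$ itself to get $j(x)=(j_{\rvo(x)}(x),\rvo(x))$. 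The centering step is what makes the separation by $\rvo$ possible at all; without it, $\rvo$ can easily be constant on a finite $A$-definable set, which is precisely the obstruction you identified but did not overcome.
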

\begin{proof} If $X$ is a singleton, the conclusion is trivial. Let $X=\{b_1,\dots b_m\}$. By considering  $b_i':=b_i-\frac{1}{m}\sum_{j=1}^{m}b_j$ we can assume that the average of $X$ is 0. We can further assume that $\val$ is constant on $X$ (otherwise, clearly $\rvo$ is not constant on $X$ and we can apply the upcoming argument). Suppose that $\val(X)=\{\alpha\}$ for some $\alpha\in \Gamma$. Under these conditions, we first claim that $\rvo$ is not constant on $X$. This is proved by showing that there are $b_i\neq b_j$ such that $\val(b_i-b_j)=\alpha$. If such two elements did not exist, one would have 
\begin{equation*}
\alpha=\val(mb_1)=\val((m-1)b_1-(b_2+\dots +b_m))\geq \min\{\val(b_1-b_k)\ |\ k\neq 1\}>\alpha,
\end{equation*}
a contradiction. Thus, $b_i$ and $b_j$ exist and $\rvo$ is not constant on $X$. For each $\xi\in \rvo(X)$, $1\leq |\rvo^{-1}(\xi)\cap X|<m$. By induction, for some $l\geq 1$ and for each $\xi\in \rvo(X)$ there is a $\xi$-definable injection \mbox{$j_\xi:\rvo^{-1}(\xi)\cap X\lto \rvu^l$}. Then $j:X\lto \rvu^{l+1}$ given by $j(x)=(j_{\rvo(x)}(x),\rvo(x))$ is the desired definable injection.
\end{proof}

The rest of this subsection is to prove the following proposition. It provides  an even more precise description of $\lrv$-formulas. This description is crucial to obtain stratifications of one-dimensional sets.

\begin{Proposition}\label{tstrat-1}
	For any given $L_{\scriptsize \rvu}$-formula $\phi(x, a,\eta)$, where $x$ is a (single)  field variable, and $a$ and $\eta$ are tuples of elements of $R$ and $\rvu$ respectively, there exist a quantifier-free $\lrv$-formula $\phi'(z,\eta')$, with $z$ a tuple of $\rvu$ variables and $\eta'$ a tuple of elements of $\rvu$ (and no extra parameters), and elements $c_0,\dots ,c_m\in R$ such that $\phi(x,a,\eta)$ is equivalent to $\phi'(\rvo(x-c_0),\dots,\rvo(x-c_m),\eta')$.
\end{Proposition}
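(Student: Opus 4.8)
The plan is to recognise the set defined by $\phi$ as an $\lrv$-definable subset of $R$, invoke the structure theory of such sets available when $T$ is power bounded, and then rewrite the resulting description in the normal form demanded by the statement. So the first step: observe that $X:=\phi(R,a,\eta)$ is an $\lrv$-definable subset of $R$ with parameters from $R\cup\rvu$. By quantifier elimination (Theorem~\ref{qe}) together with the description of one-variable $\lrv$-definable sets recalled after it --- namely that, when $T$ is power bounded, every $\lrv$-definable subset of $R$ is a finite union of singletons, open intervals and $\val$-discs (this is where power boundedness enters, via \cite[Observation 7.3]{driesII}) --- we may write $X=Y_1\cup\dots\cup Y_r$ with each $Y_s$ a singleton, an open interval, or a $\val$-disc.

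The second step is bookkeeping. A singleton $\{d\}$ is defined by the quantifier-free $L_{og}$-formula $\rvo(x-d)=0_\subrv$. A $\val$-disc $\{x\in R\mid \xi_1\,\square_1\,\rvo(x-d)\,\square_2\,\xi_2\}$, with $\square_i\in\{<,\le\}$, is already, by its very shape, defined by a quantifier-free $L_{og}$-formula in the single variable $\rvo(x-d)$ with parameters $\xi_1,\xi_2\in\rvu$. An open interval with endpoints $u<v$ in $R\cup\{\pm\infty\}$ is defined by a quantifier-free $L_{og}$-formula in $\rvo(x-u)$ and $\rvo(x-v)$: by the definition of the order on $\rvu$, the positive elements of $R$ are exactly the preimages under $\rvo$ of the elements $>_\subrv 0_\subrv$, so that $x>u\iff 0_\subrv<_\subrv\rvo(x-u)$ and $x<v\iff\rvo(x-v)<_\subrv 0_\subrv$, and one simply drops the clause attached to an infinite endpoint. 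Now collect all the field elements $d,u,v$ occurring in these descriptions into a tuple $c_0,\dots,c_m$, collect all the $\rvu$-elements occurring into a tuple $\eta'$, rename each occurrence of $\rvo(x-c_j)$ by a fresh $\rvu$-variable $z_j$, and let $\phi'(z,\eta')$ be the disjunction over $s\le r$ of the quantifier-free $L_{og}$-formulas just obtained for the pieces $Y_s$. Then $\phi(x,a,\eta)$ is equivalent to $\phi'(\rvo(x-c_0),\dots,\rvo(x-c_m),\eta')$, which is what is wanted.

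The main point --- and the only place where anything substantial happens --- is the first step: the structure theorem for one-variable $\lrv$-definable sets, which is available precisely because $T$ is power bounded; everything afterwards is the elementary rewriting of the second step. If one wanted a more self-contained argument one could instead begin from Convention~\ref{rvterms}, reduce to understanding $\rvo(t(x,a))$ for each $L$-term $t$ as $x$ varies, cut $R$ by an o-minimal cell decomposition so that each function $t(\cdot,a)$ is constant or a strictly monotone $C^1$ diffeomorphism on each resulting interval, and then apply the one-variable valuation property for $L$-definable functions in power bounded $T$-convex fields to refine each interval and pin down the value of $\rvo(t(x,a))$ on each resulting piece; there the delicate points would be the additive cancellation phenomena for $\rvo$ and the verification that only finitely many centres are needed (which can be extracted from the exchange principle in the field sort and a relativisation over $a$ of Lemma~\ref{lemma3}, forcing every centre that occurs to lie in $\acl(a)$). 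In either case no use is made of Theorem~\ref{main1}, whose proof comes later.
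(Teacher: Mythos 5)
Your main route is correct for the statement as literally written, but it takes a genuinely different path from the paper's, and the difference has a consequence worth flagging. Both proofs start from the decomposition of $X$ into singletons, intervals, and $\val$-discs (available because $T$ is power bounded). You then simply read off, for each $\val$-disc $\{x\mid \xi_1\,\square_1\,\rvo(x-d)\,\square_2\,\xi_2\}$, the centre $d$ as one of the $c_i$'s and the parameters $\xi_1,\xi_2$ as entries of $\eta'$. The paper deliberately avoids doing this: interval endpoints and $0$ become the $c_i$'s, and for each $\val$-disc it instead analyses the two cuts the disc makes in $R$, invokes Proposition~\ref{defpointclosedball} (and behind it Lemma~\ref{onebyone}, Corollary~\ref{fullauto} and the immediate-automorphism machinery) to produce an $(\{a\}\cup\{\eta\})$-definable point $b$ near each cut, shows via Lemma~\ref{lemma3} that $b$ is in fact $a$-definable, and places $\rvo(b)$ into $\eta'$. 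The reason for this extra work is parameter control: the centre $d$ of a $\val$-disc in the decomposition is not canonically determined by the disc, and nothing in the structure theorem guarantees that any choice of $d$ lies in $\dcl_L(a,\eta)$ or $\acl(a)$. In the proof of Theorem~\ref{assumptcon}(3), the paper needs the set $\{c_0,\dots,c_m\}$ to be $(A_0\cup\{\eta\})$-definable so that Lemma~\ref{lemma3} can upgrade it to $A_0$-definable; your choice of $c_i$'s does not come with that property. So: your proof establishes Proposition~\ref{tstrat-1} as stated (which asks only for the existence of some $c_i\in R$ and $\eta'\in\rvu$), but it proves strictly less than the paper's proof, which builds in the $a$-definability of $c_0,\dots,c_m$ that is implicitly relied upon downstream. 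To recover that, one would still need essentially the argument you sketch only in passing in your alternative route (via Proposition~\ref{defpointclosedball} or the exchange principle together with Lemma~\ref{lemma3}) to choose the centres definably; as written, the main route leaves a gap relative to the intended use of the proposition, even though it is formally adequate for the bare statement.
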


The strategy to prove this proposition is to start with a partition of the set $X=\phi(R,a,\eta)$ into finitely many intervals and $\val$-discs. Working with the pieces of this partition, we pick the points $c_0,\dots ,c_m$ and the entries of $\eta'$  in such a way that, at the end, whether $x\in R$ is in $X$ is completely determined by equalities and inequalities between the entries of $\eta'$ and the values $\rvo(x-c_0),\dots, \rvo(x-c_m)$. This is clearly enough to conclude the result.

When a piece of the partition is an interval we put the endpoints (or the unique element when a singleton) as points $c_i$'s. When the piece is a $\val$-disc more work is needed: we show that for each cut made by the disc, we can find an $a$-$\lrv$-definable point $d$ \emph{sufficiently close} to the cut; $\rvo(d)$ is then put as an entry of $\eta'$. 

Ensuring the appropriate definability of $d$ above is the major difficulty. We solve this by using~\cite[Lemma]{yin}, which implies that every $a$-$\lrv$-definable closed ball contains an $a$-$\lrv$-definable point. We give a quick overview of the argument.

We fix a (sufficiently) saturated elementary extension $(R^*,\rvu^*)$ of $(R,\rvu)$ (naturally, as  $\lrv$-structures).  
Other substructures of $\sat$ will be denoted as simply $M$, $N$, \dots; we also make use of the notation $\vf(M):=M\cap R^*$ and $\rvu(M):=M\cap \rvu^*$. Notice that being a substructure implies that $\rvo(M):=\{\rvo(x)\ | \ x\in M\}\subseteq \rvu(M)$ and the strict containment could occur.  When the equality holds, i.e. $\rvo(M)=\rvu(M)$, we say that $M$ is \emph{field-generated}.  For $A\subseteq R^*\cup \rvu^*$, $\langle A\rangle$ denotes the $\lrv$-substructure generated by $A$. If $X\subseteq R^*$, $\genl{X}$ denotes the $L$-substructure of $R^*$ generated by $X$. 

\begin{Remark}
If $A\subseteq R^*$, then $\gen{A}$ is field-generated.
\end{Remark}
\begin{proof}
Note that $\vf(\gen{A})$ equals  $\genl{A}$, so it is enough to observe that $(\genl{A},\rvo(\genl{A}))$ is the minimal $\lrv$-substructure containing $A$; this is true because $\rvo(\genl{A})$ will always be contained in any other $\lrv$-structure with field-sort $\genl{A}$. 
\end{proof}

\begin{Fact}
 The substructure $M$ is an elementary substructure of $(R^*,\rvu^*)$ (i.e. a model of $\trv$) if and only if it is field-generated and $\val(M)\neq \{0\}$.
\end{Fact}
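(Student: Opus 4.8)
The plan is to prove both directions of the equivalence, with the forward direction being essentially immediate and the backward direction constituting the real content. For the forward direction: if $M$ is an elementary substructure of $(R^*,\rvu^*)$, then $M$ is itself a model of $\trv$, hence a $T$-convex field, so its valuation is nontrivial (a $T$-convex subring is by definition proper), giving $\val(M)\neq\{0\}$; moreover in any model of $\trv$ the map $\rvo$ is surjective onto $\rvu$ (since every element of $R^\times/(1+M_R)$ is the $\rvo$-image of a field element by construction), so $\rvo(M)=\rvu(M)$, i.e.\ $M$ is field-generated.

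For the backward direction, I would use the Tarski--Vaught test together with quantifier elimination for $\trv$ (Theorem~\ref{qe}). By quantifier elimination, to check that $M\preceq(R^*,\rvu^*)$ it suffices to verify that $M$ is a model of $\trv$, or alternatively to verify the Tarski--Vaught criterion only against quantifier-free formulas, which again reduces to closure properties of $M$. Concretely, because $M$ is field-generated, $\vf(M)$ is an $L$-substructure of $R^*$ closed under all $0$-definable continuous functions restricted to it (this needs an argument: the $T$-convex subring $O_{R^*}\cap\vf(M)$ must be shown to be a $T$-convex subring of $\vf(M)$, using that $\vf(M)$ is already an $L$-elementary substructure of $R^*$ — which follows from o-minimality of $T$ and the fact that $\vf(M)$ is $L$-algebraically closed in itself, or more simply from model-completeness considerations for $T$). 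Once $\vf(M)$ is seen to be a model of $T$ with a $T$-convex subring that is proper (here is exactly where $\val(M)\neq\{0\}$ is used — it guarantees the subring is not all of $\vf(M)$), the pair $(\vf(M),O_{R^*}\cap\vf(M))$ is a $\tcon$-model; adjoining $\rvu(M)=\rvo(\vf(M))$ with the induced structure then yields a model of $\trv$, and quantifier elimination upgrades "submodel" to "elementary submodel".

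The main obstacle I expect is the step showing that $\vf(M)$ is an elementary $L$-substructure of $R^*$, equivalently that $\vf(M)$ is closed under all $0$-definable continuous $L$-functions $R^*\to R^*$ — a priori $M$ is only known to be closed under the function symbols of $L$, not under arbitrary definable continuous functions. The resolution should invoke that $T$ is o-minimal hence (after naming constants) model complete, so that any $L$-substructure which is \emph{definably closed} in the relevant sense is already an elementary substructure; one must check that being an $\lrv$-substructure with $\rvo(M)=\rvu(M)$ forces $\vf(M)$ to be definably closed in $R^*$ — if $a\in R^*$ is $L$-definable over $\vf(M)$ then $a\in\dcl_L(\vf(M))$ and we must place $a$ in $M$, which works because $M$ being an $\lrv$-substructure and field-generated means it absorbs such points (this is in the same spirit as Lemma~\ref{lemma3}). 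Handling the degenerate case $\val(M)=\{0\}$ correctly — there $M$ fails to be a valued field in the required sense, so the equivalence's left-hand side genuinely fails — closes the argument.
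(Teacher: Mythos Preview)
The paper states this as a Fact without proof, treating it as routine background. Your proposal is essentially correct and fills in what the paper omits.

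One clarification on the step you flag as the main obstacle: showing $\vf(M)\preceq_L R^*$ is actually immediate in context. The foundational papers on $T$-convexity (van den Dries--Lewenberg, cited here as \cite{driesI} and \cite{driesII}) carry the standing assumption that $T$ admits quantifier elimination and a universal axiomatisation --- achieved if necessary by a harmless definitional expansion --- and the present paper inherits this implicitly (for instance, the proof of Theorem~\ref{qe} appeals to QE for $\tcon$, which in turn rests on QE for $T$). Under that convention every $L$-substructure of $R^*$ is automatically a model of $T$ and an elementary $L$-substructure, so no separate argument via definable closure or Lemma~\ref{lemma3} is needed. With this in hand your backward direction goes through cleanly: $\vf(M)\preceq_L R^*$; the convex subring $O_{R^*}\cap\vf(M)$ is $T$-convex in $\vf(M)$ (closure under $0$-definable continuous functions is inherited from $O_{R^*}$) and proper precisely because $\val(M)\neq\{0\}$; hence $M$ is a model of $\trv$, and QE for $\trv$ upgrades the inclusion to an elementary one. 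Your forward direction is fine as written.
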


Naturally, $R^*$ ---along with its valuation ring $O_{R^*}$--- is a saturated model of $\tcon$. Furthermore, field-generated substructures of $\sat$ correspond naturally to  $\lcon$-substructures of $(R^*,O_{R^*})$. Thus,  arguments about $\lrv$-morphisms between field-generated $\lrv$-substructures can be reduced to facts about $\lcon$-morphisms, and vice versa.

\begin{Definition}[{\cite{yin}}]
For an $\lrv$-substructure $M$ of $(R^*,\rvu^*)$, an $\lrv$-embedding  $\sigma :M\lto (R^*,\rvu^*)$ is said to be immediate if $\sigma(\xi)=\xi$ for all $\xi\in \rvu(M)$. The notion of an immediate $\lrv$-isomorphism between two $\lrv$-substructures $M$ and $N$ is defined in accordance.
\end{Definition}

Notice that if there is an immediate $\lrv$-isomorphism between $M$ and $N$, then, necessarily, $\rvu(M)=\rvu(N)$.

\begin{Remark}
 For any two $a,a'\in R^*$, there exists an immediate $\lrv$-automorphism $\sigma$ of $(R^*,\rvu^*)$ such that $\sigma(a)=a'$ if and only if $a$ and $a'$ have the same type over $\rvu^*$.
\end{Remark}
\begin{proof}
 The implication from left to right is obvious. For the converse, let us assume that $\tp(a/\rvu^*)=\tp(a'/\rvu^*)$ and pick an $\lrv$-automorphism $\sigma$ of $(R^*,\rvu^*)$ such that $\sigma(a)=a'$. If $\sigma$ is not immediate, there is $\xi\in \rvu^*$ such that $\sigma(\xi)\neq \xi$. Let $c\in R^*$ be such that $\rvo(c-a)=\xi$. We must then have $\sigma(\xi)=\rvo(\sigma(c)-\sigma(a))= \rvo(c-a)=\xi$, a contradiction.
\end{proof}

If $M$ is a substructure and $x\in R^*\cup \rvu^*$, $Mx$ denotes the set $M\cup \{x\}$.

\begin{Lemma}\label{onebyone}
	Let $\sigma:M\lto N$ be an immediate $\lrv$-isomorphism. Suppose that $a\in R^*\setminus M$ and $a'\in R^*\setminus N$ are such that $\rvo(a-c)=\rvo(a'-\sigma c)$ for all $c\in M$. Then $\sigma$ can be extended to an immediate $\lrv$-isomorphism $\sigma':\gen{Ma}\lto \gen{Na'}$ such that $\sigma'(a)=a'$. 
\end{Lemma}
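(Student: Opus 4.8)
The plan is to build the extension $\sigma'$ by the standard back-and-forth/amalgamation argument for valued-field-type structures, reducing everything to what is already available for $\lcon$-structures. First I would observe that since $\gen{Ma}$ is generated over $M$ by a single field element, its field part is $\vf(\gen{Ma})=\genl{\vf(M)\,a}$, the $L$-substructure of $R^*$ generated by $\vf(M)$ and $a$; similarly for $\gen{Na'}$. So it suffices to produce an $\lcon$-isomorphism between the valued fields $\genl{\vf(M)\,a}$ and $\genl{\vf(N)\,a'}$ (with their induced convex subrings) extending $\sigma|_{\vf(M)}$ and sending $a\mapsto a'$, and then check it is immediate, i.e. fixes every element of $\rvu(M)=\rvu(N)$; the latter will be automatic once we know the $\rvu$-values of all elements of $\vf(\gen{Ma})$ are already in $\rvu(M)$, which I address below. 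The hypothesis $\rvo(a-c)=\rvo(a'-\sigma c)$ for all $c\in M$ is exactly the statement that $a$ realizes over $\vf(M)$ the same ``$\rvo$-cut'' as $a'$ does over $\vf(N)$ under $\sigma$; this is the input that should make the amalgamation go through.

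The key steps, in order: (1) Reduce to the $\lcon$-setting as above, noting (via the Remark and the correspondence between field-generated $\lrv$-substructures and $\lcon$-substructures recorded before the Fact) that it is enough to extend $\sigma$ as a map of valued $L$-fields. (2) Use $o$-minimality/power boundedness of $T$ and the description of $\lrv$-definable subsets of $R$ following Theorem~\ref{qe} (each such set is a finite union of singletons, intervals and $\val$-discs, and each $\lrv$-formula in one field variable reduces to boolean combinations of the atomic formulas in~\eqref{eq:3.1}) to show that the $\lcon$-type of $a$ over $\vf(M)$ is determined by the $\rvo$-data $\{\rvo(a-c):c\in M\}$ together with the $L$-type of $\vf(M)$; the hypothesis then forces $\tp_{\lcon}(a/\vf(M)) = \sigma^{-1}\big(\tp_{\lcon}(a'/\vf(N))\big)$. (3) Invoke quantifier elimination for $\tcon$ (from~\cite{driesII}, as used in the proof of Theorem~\ref{qe}) to upgrade this type equality to an actual $\lcon$-isomorphism $\genl{\vf(M)\,a}\lto\genl{\vf(N)\,a'}$ extending $\sigma|_{\vf(M)}$ with $a\mapsto a'$. (4) Transport this back to an $\lrv$-isomorphism $\sigma':\gen{Ma}\lto\gen{Na'}$, extend it to the (possibly larger, non-field-generated) structures $\gen{Ma}$, $\gen{Na'}$ by acting as $\sigma$ on the $\rvu$-parts that are not hit by $\rvo$ — here one uses that $\rvu(\gen{Ma})=\rvu(M)$ (no new $\rvu$-elements are created, since the new field element $a$ only contributes $\rvo$-values $\rvo(a-c)$, all already equal to existing values by hypothesis) — and verify it is immediate.

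The main obstacle I expect is step (2): showing that, in the presence of a $T$-convex valuation and for $T$ merely power bounded rather than polynomially bounded, the $\lrv$-type (equivalently the $\lcon$-type) of the single new element $a$ over the substructure $\vf(M)$ is pinned down by the family of values $\rvo(a-c)$. One must rule out that $a$ could sit inside a $\val$-disc around some $c\in\vf(M)$ in two ``$\lcon$-inequivalent'' ways; this is precisely where the fine structure of $\val$-discs and the fact that $\lrv$-definable subsets of $R$ have the simple shape guaranteed by~\cite[Observation 7.3]{driesII} (again a consequence of power boundedness) are used. A subtlety within step (2)–(4) is that $\gen{Ma}$ need not be field-generated, so the immediacy of $\sigma'$ must be checked by hand rather than inherited from the $\lcon$-picture — but since the hypothesis guarantees every value $\rvo(a-c)$ already lies in $\rvu(M)$, no genuinely new $\rvu$-element appears and immediacy follows.
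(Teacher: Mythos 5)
The paper's ``proof'' is just a citation to Yin's lemma, so there is no in-paper argument to compare against line by line; what can be assessed is whether your reconstruction would stand on its own. It would not, for two reasons.

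First, there is a circularity. In step (2) you invoke the reduction of one-variable $\lrv$-formulas to boolean combinations of the atomic formulas in~\eqref{eq:3.1}, and the resulting description of $\lrv$-definable subsets of $R$ as finite unions of points, intervals and $\val$-discs. Both of these are consequences of Theorem~\ref{qe} (QE for $\trv$), and Yin's proof of Theorem~\ref{qe} is precisely where this extension lemma (or its analogue) does its work. You cannot use downstream corollaries of QE for $\trv$ to prove the lemma that feeds into that QE. The only non-circular input available at this stage is QE for $\tcon$ together with the van den Dries--Lewenberg analysis (including the Wilkie inequality), which is what the actual argument must run on.

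Second, and more substantively, even granting a non-circular source for the ``simple shape'' of definable sets, that fact does not by itself yield the crux of step (2), namely that the hypothesis $\rvo(a-c)=\rvo(a'-\sigma c)$ for all $c\in \vf(M)$ forces equality of quantifier-free $\lcon$-types. A quantifier-free $\lcon$-formula in one variable over $\vf(M)$ involves conditions of the form $O(t(x,c))$ for arbitrary $L$-terms $t$, not merely $x-c$. Knowing $\rvo(a-c)$ for all $c\in\vf(M)$ gives you the $L$-cut of $a$ in $\vf(M)$ (hence the $L$-type, by o-minimality) and the valuation data for the affine terms $a-c$, but passing from this to control of $\val(t(a,c))$ for general $t$ is exactly the hard content of the van den Dries--Lewenberg theory: one has to split into cases (residue-field extension, value-group extension, immediate extension) and use the Wilkie inequality to see that the valuation on $\genl{\vf(M)a}$ is determined. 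Your proposal names this as the main obstacle, correctly, but then points to a result that does not close it.

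Related to this, the claim in step (4) that $\rvu(\gen{Ma})=\rvu(M)$ because ``$a$ only contributes $\rvo$-values $\rvo(a-c)$'' is false as stated: $\rvu(\gen{Ma})$ contains $\rvo(t)$ for every element $t$ of $\genl{\vf(M)a}$, i.e.\ $\rvo$-values of all $L$-terms in $a$ over $\vf(M)$, not just the affine ones. These need not lie in $\rvu(M)$. Immediacy of the extension is still attainable, but it must be derived from the fact that $\sigma'$, once shown to exist on the field part, is forced to fix each such $\rvo(t(a,c))$ (since $\sigma'(\rvo(t(a,c)))=\rvo(t(a',\sigma c))$ and the type equality makes these equal), not from a purported absence of new $\rvu$-elements.

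In short: the framework of step (1) (reduce to $\lcon$, extend, transport back) is reasonable, but the argument in step (2) is both circular and incomplete at exactly the point that carries the weight, and the step (4) justification of immediacy is wrong. A correct proof has to go through the van den Dries--Lewenberg extension machinery (and the Wilkie inequality for the power bounded case) directly rather than through corollaries of QE for $\trv$.
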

\begin{proof}~\cite[Lemma ]{yin}
%\todo{REWRITE THIS} (Yimu's)
%First of all notice that $a$ and $a'$ make the same kind of cut on $R^*$  acording to $\sigma$. Therefore, the saturation of $R^*$ as an $L$-structure gives us an $L$-isomorphism $\alpha$ from $Ma$ to $Na'$ that extends $\sigma$ on $M$. The rest of the proof is to ensure that we can extend this $L$-isomorphism into an immediate $\lrv$-isomorphism.   First we assume that both $\gen{Ma}$ and $\gen{Na'}$ are field-generated. We consider two cases.
%
%	{\bf Case I.} This first case is when $\rvu(Ma)=\rvu(M)$. This forces that $\rvu(Na')=\rvu(N)$ too. Also notice that since $\sigma$ is immediate, $\rvu(M)=\rvu(N)$. Since $\gen{Ma}$ is field-generated, it is enough to define $\sigma'$ as $\alpha$ on the field-sort and as the identity on the $\rvu$-sort.
%
%	{\bf Case II.} Suppose that $\rvu(Ma)$ contains $\rvu(M)$ properly. Let $\xi\in \rvu(Ma)\setminus \rvu(M)$ and $b\in \rvo^{-1}(\xi)$. Clearly, $b\notin Na'$; moreover, $\rvo(b-c)=\rvo(b-\sigma c)$ for all $c\in M$, and so $\alpha$ can be extended extended to an $L$-isomorphism from $Mb$ to $Nb$ sending $b$ to $b$. Notice that the last condition suits our aim of sending $\xi=\rvo(b)$ to itself.  By iterating this argument obtain an $L$-isomorphism from $Ma$ to $Na$. Hence we are in the situation of Case I.
\end{proof}

\begin{Corollary}\label{fullauto}
Let $M$ and $N$ be $\lrv$-substructures of $(R^*,\rvu^*)$. Then every immediate $\lrv$-isomorphism $\sigma:M\lto N$ can be extended to an immediate $\lrv$-automorphism $\bar \sigma$ of $(R^*,\rvu^*)$.
\end{Corollary}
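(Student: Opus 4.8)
The plan is to run a back-and-forth argument over the field sort $R^*$, letting Lemma~\ref{onebyone} carry out the one-step extensions. Concretely, I would consider the partial order whose elements are the immediate $\lrv$-isomorphisms $\tau:M'\lto N'$ between $\lrv$-substructures of $(R^*,\rvu^*)$ that extend $\sigma$, ordered by extension; since a directed union of immediate $\lrv$-isomorphisms is again one, Zorn's lemma yields a maximal such $\bar\sigma:M'\lto N'$. I will then argue that $\vf(M')=\vf(N')=R^*$, which suffices: $\vf(M')=R^*$ forces $\rvu(M')\supseteq\rvo(R^*)=\rvu^*$, so $M'=(R^*,\rvu^*)$, and likewise $N'=(R^*,\rvu^*)$; hence $\bar\sigma$ is an $\lrv$-automorphism, and it is immediate by construction.

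To see $\vf(M')=R^*$: supposing not, pick $a\in R^*\setminus\vf(M')$ (the possibility $\vf(N')\neq R^*$ is symmetric, applying the argument to $\bar\sigma^{-1}$, which is immediate because $\rvu(M')=\rvu(N')$). What I need is an $a'\in R^*$ with $\rvo(a-c)=\rvo(a'-\bar\sigma c)$ for all $c\in\vf(M')$; given this, Lemma~\ref{onebyone} extends $\bar\sigma$ to an immediate isomorphism $\gen{M'a}\lto\gen{N'a'}$ with $a\mapsto a'$ --- and automatically $a'\notin\vf(N')$, since $a'=\bar\sigma c_0$ would give $\rvo(a-c_0)=\rvo(0)=0$, i.e.\ $a=c_0\in\vf(M')$ --- contradicting maximality. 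So the whole argument reduces to producing such an $a'$.

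This is the step where quantifier elimination enters, and the one I expect to be the crux. I would consider the partial type $p(y):=\{\,\rvo(y-\bar\sigma c)=\rvo(a-c)\ :\ c\in\vf(M')\,\}$ over the parameters $\bar\sigma(\vf(M'))\cup\{\rvo(a-c):c\in\vf(M')\}$, and show it is finitely satisfiable in $(R^*,\rvu^*)$; saturation then realizes it by some $a'$. For finite satisfiability, first note that because $\bar\sigma$ is immediate it extends to an $\lrv$-embedding $\tau':\gen{\vf(M')\cup\rvu^*}\lto(R^*,\rvu^*)$ acting as the identity on $\rvu^*$: the field part of $\gen{\vf(M')\cup\rvu^*}$ is just $\vf(M')$ (no $\lrv$-term produces field elements out of $\rvu$-elements), and $\rvo$ is respected on $\vf(M')$ because $\rvo(\bar\sigma c)=\bar\sigma(\rvo c)=\rvo c$. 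Now for finitely many $c_1,\dots,c_n\in\vf(M')$, writing $\xi_i:=\rvo(a-c_i)$, Theorem~\ref{qe} makes the formula $\exists y\,\bigwedge_{i\leq n}\rvo(y-z_i)=\xi_i$ equivalent to a quantifier-free $\lrv$-formula in $(z_1,\dots,z_n)$ with the $\xi_i$ as parameters; it holds in $(R^*,\rvu^*)$ at $(c_1,\dots,c_n)$, witnessed by $y=a$, hence, being quantifier-free and having its parameters fixed by $\tau'$, it also holds at $(\bar\sigma c_1,\dots,\bar\sigma c_n)$. That is exactly finite satisfiability of $p(y)$.

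The anticipated obstacle is precisely this last step: one has to recognise the hypothesis of Lemma~\ref{onebyone} as a partial type and check its finite satisfiability, the subtle point being that merely transporting the $\lrv$-type of $a$ over $M'$ would not force immediacy, which is why $\bar\sigma$ must first be extended over all of $\rvu^*$ before quantifier elimination can be applied. Everything else is the routine bookkeeping of a back-and-forth, together with the standing convention that the substructures occurring are small relative to the saturation degree of $(R^*,\rvu^*)$ (as they are in all the applications in the paper).
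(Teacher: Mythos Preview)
Your proof is correct and takes a genuinely different route from the paper's. The paper proceeds in two explicit phases: first, for each $\xi\in\rvu^*\setminus\rvu(M)$ it picks $a\in\rvo^{-1}(\xi)$ and checks by a short valuation calculation that $\rvo(a-c)=\rvo(a-\sigma c)$ for all $c\in\vf(M)$, so Lemma~\ref{onebyone} extends $\sigma$ by $a\mapsto a$; iterating yields an immediate embedding $\sigma'$ whose domain has full $\rvu$-part $\rvu^*$. Second, it invokes quantifier elimination for $\tcon$ (not $\trv$) to extend $\sigma'$ on the field sort to an $\lcon$-automorphism of $R^*$, which together with the identity on $\rvu^*$ gives the desired $\bar\sigma$. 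Your argument instead runs a single back-and-forth on the field sort, using $\lrv$-quantifier elimination (Theorem~\ref{qe}) together with saturation to realize the partial type $p(y)$; your key device---extending $\bar\sigma$ by the identity on $\rvu^*$ to the embedding $\tau'$ before applying QE---achieves the same effect as the paper's first phase without adjoining any field elements. The paper's approach is more explicit (in phase one the image $a'=a$ is given concretely) and cleanly separates the two sorts; yours is a tidier instance of the standard ``QE plus saturation gives back-and-forth'' template and stays entirely within $\lrv$. Both arguments rely on the same implicit smallness hypothesis relative to the saturation of $(R^*,\rvu^*)$: yours to realize $p(y)$ over $\vf(M')$, the paper's to extend the partial $\lcon$-elementary map $\sigma'$ to an automorphism.
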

\begin{proof} (Proof based on that of{~\cite[Lemma ]{yin}})
Let $\sigma$ be as in the statement; we then know that $\rvu(M)=\rvu(N)$. Let $\xi\in \rvu^*\setminus \rvu(M)$ and $a\in \rvo^{-1}(t)$. Since $a$ is in an open ball entirely disjoint from $M$ and $N$, we deduce that $\rvo(a-c)=\rvo(a-\sigma c)$ for all $c\in \rvu(M)$. By Proposition~\ref{onebyone}, there is an immediate $\lrv$-isomorphism from $\gen{Ma}$ to $\gen{Na}$ extending $\sigma$. By iterating this process over all $\xi\in \rvu^*\setminus \rvu(M)$, we eventually obtain an immediate $\lrv$-embedding $\sigma'$, extending $\sigma$, from an $\lrv$-substructure $M'$ into $\sat$, where $M'$ is such that $\rvu(M')=\rvu^*$. By the quantifier elimination for $\tcon$, $\sigma'|_{R^*}$ can be extended to a full $\lcon$-automorphism of $R^*$. Putting this automorphism on the field-sort and keeping $\sigma'$ on the $\rvu$-sort, we obtain the desired $\lrv$-automorphism $\bar \sigma$.
\end{proof}

If $M$ is a substructure of $\sat$, $\dcl_L(M)$ denotes the $L$-definable closure of $M$, i.e. the set of all elements of $R^*$ definable with an $L$-formula with parameters from $\vf(M)$. 
\begin{Remark}\label{easy5}
Let $M$ be a substructure of $\sat$ and $a,a'\in \vf(M)$. Then, $B(a,\geq \val(a-a'))\cap \dcl_{L}(M)= \varnothing$ if and only if for all $c\in \dcl_L(M)$, $\rvo(c-a)=\rvo(c-a')$.
\end{Remark}
\begin{proof}
For $c\in \dcl_L(M)$, clearly, $\rvo(c-a)\neq \rvo(c-a')$ if and only if $\val(c-a)\geq \val(a-a')$. 
\end{proof}
 
\begin{Proposition}\label{defpointclosedball}
 Let $M$ be a substructure of $\sat$. Any $M$-definable closed ball $B\subseteq R^*$ contains an $M$-definable point.
\end{Proposition}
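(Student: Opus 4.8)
The plan is to argue by contradiction, combining the immediate‑automorphism machinery of Lemma~\ref{onebyone} and Corollary~\ref{fullauto} with the tameness of the residue field. Assume $B$ is $M$-definable but contains no $M$-definable point. Since every element of $\vf(M)$ is $M$-definable, $B\cap\vf(M)=\varnothing$; and since $B$ is $M$-definable it is not a singleton, hence is infinite (the residue field of $R^*$ is a model of $T$, hence infinite), with $M$-definable radius $\gamma$ which, being the radius of a genuine \emph{closed} ball, is attained.

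\emph{Step 1 (all points of $B$ have the same type over $M$).} Let $a,a'\in B$ be distinct. For each $c\in\vf(M)$ we have $c\notin B$, so $\val(c-a)<\gamma\le\val(a-a')$ and therefore $\rvo(c-a)=\rvo(c-a')$, exactly as in Remark~\ref{easy5}. As $a,a'\notin M$, the identity on $M$ together with the pair $(a,a')$ satisfies the hypotheses of Lemma~\ref{onebyone}, which yields an immediate $\lrv$-isomorphism $\gen{Ma}\lto\gen{Ma'}$ fixing $M$ and sending $a$ to $a'$; by Corollary~\ref{fullauto} it extends to an immediate $\lrv$-automorphism $\bar\sigma$ of $\sat$ that fixes $M$ pointwise and satisfies $\bar\sigma(a)=a'$. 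It follows that no $M$-definable set $Z$ can satisfy $\varnothing\subsetneq Z\cap B\subsetneq B$: from $a\in Z\cap B$ and $a'\in B\setminus Z$ we would get $a'=\bar\sigma(a)\in\bar\sigma(Z)=Z$, absurd.

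\emph{Step 2 (manufacturing such a $Z$).} Let $\pi\colon B\lto B^{\circ}$ be the $M$-definable map sending a point of $B$ to the maximal open sub-ball of $B$ containing it (the relevant equivalence relation, $\val(x-x')>\gamma$, being $M$-definable). The set $B^{\circ}$ is an $M$-definable imaginary set which, after a choice of a length scale of value $\gamma$, becomes a torsor over the additive group of the residue field $\overline R$; in any case it is controlled by $\overline R$ together with the value group. Now $\overline R$ is stably embedded in models of $\then$, and the structure it induces is that of a pure model of $T$; as $T$ is an o-minimal expansion of $\rcf$ it eliminates imaginaries and has definable Skolem functions, and the value group of a $T$-convex field is likewise o-minimal. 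Hence $\overline R\cap\dcl(M)$ is an elementary $L$-substructure of $\overline R$ carrying a code for $B^{\circ}$, so $B^{\circ}$ has an element in $\dcl(M)$. Its preimage $Z:=\pi^{-1}(\text{that element})$ is an $M$-definable maximal open sub-ball of $B$, so $\varnothing\ne Z\subsetneq B$, contradicting Step~1. This contradiction proves the proposition.

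The one delicate point is Step~2: verifying that the quotient $B^{\circ}$ is genuinely governed, over $M$, by the residue field and value group --- so that definable choice in those tame sorts returns an $M$-definable maximal open sub-ball --- and keeping track of the scales of value $\gamma$ involved. This is where power boundedness is used, via the tight description of $\lrv$-definable sets from Theorem~\ref{qe} and the ensuing corollaries, which underlie the tameness and stable embeddedness of the residue field and value group. Step~1, by contrast, is a routine application of Lemma~\ref{onebyone} and Corollary~\ref{fullauto} once the reduction $B\cap\vf(M)=\varnothing$ is observed.
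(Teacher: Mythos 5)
Your Step~1 is fine: it shows that any two points of $B$ are conjugate over $M$ via an immediate $\lrv$-automorphism, hence no $M$-definable set properly separates $B$. But this by itself yields no contradiction, and the burden falls entirely on Step~2, which I believe is broken — in fact essentially circular.

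You want to produce an $M$-definable maximal open sub-ball $Z\subsetneq B$ by arguing that the quotient $B^{\circ}$ of $B$ by the relation $\val(x-x')>\gamma$ is ``governed by'' the residue field, and then invoking definable choice in $\overline{R}$ (plus the value group). The trouble is that $B^{\circ}$ is only a $\overline{R}$-\emph{torsor}, not canonically a subset of $\overline{R}^k\times\Gamma^l$. To transport $B^{\circ}$ into the residue-field sort in an $M$-definable way you need to choose both a unit of value $\gamma$ \emph{and} a basepoint in $B$; the first may be harmless, but the second is exactly the $M$-definable point of $B$ that the whole proposition is trying to produce. Without such a basepoint, stable embeddedness of $\overline{R}$ gives you that the induced structure on the torsor $B^{\circ}$ lives in the residue-field sort up to parameters, but it gives you no reason for the torsor to have a definable element; a nontrivial definable torsor over a stably embedded sort can perfectly well be pointless over $M$. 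Indeed, Step~1 already tells you that all elements of $B$ (and hence of $B^{\circ}$) are conjugate over $M$, so if $B^{\circ}$ had an $M$-definable element there would be only one element and $B$ would be a single open ball --- which is false since $B$ is a closed ball. So the phrase ``so $B^{\circ}$ has an element in $\dcl(M)$'' is not a gap you can patch; the claim is false as stated.

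The paper closes the argument in a genuinely different way: rather than trying to split $B$ by an $M$-definable set, it uses \emph{saturation} of $\sat$ to find a strictly larger open ball $D\supsetneq B$ still disjoint from $\dcl_L(M)$, takes $a\in B$ and $a'\in D\setminus B$, checks $\rvo(c-a)=\rvo(c-a')$ for all $c\in\dcl_L(M)$ (Remark~\ref{easy5}), and then the resulting $M$-fixing immediate automorphism moves $a$ \emph{out of} $B$, so it cannot fix $B$ setwise — contradicting $M$-definability of $B$ directly. The key step you are missing is precisely this enlargement of the ball via saturation, which lets the automorphism leave $B$; your Step~1 only moves points inside $B$, where no contradiction is available. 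Replacing Step~2 by the saturation argument repairs the proof, and it also removes any appeal to the residue field's stable embeddedness or definable choice.
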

\begin{proof}
Suppose otherwise, i.e., that $B\cap \dcl_L(M)=\varnothing$. By the saturation of $(R^*,\rvu^*)$, there is an open ball $D\subseteq R^*$ such that $D\cap \dcl_L(M)=\varnothing$ and $B\subsetneq D$. Let $a\in B$ and $a'\in D\setminus B$. Since clearly  $B(a,\geq \val(a-a'))\subseteq D$,  it follows from Remark~\ref{easy5} that $\rvo(c-a)=\rvo(c-a')$, for all $c\in \dcl_L(M)$. By Lemma~\ref{onebyone} and Corollary~\ref{fullauto},  there is an immediate $\lrv$-automorphism $\sigma$ of $(R^*,\rvu^*)$ fixing $M$ and such that $\sigma(a)=a'$. Hence $\sigma(B)\neq B$, which contradicts the $M$-definability of $B$. 
\end{proof}

We finally write down the proof of Proposition~\ref{tstrat-1}.

\begin{proof}[Proof of Proposition~\ref{tstrat-1}]
Let $X:=\phi(R,a,\eta)$. Then $X$ has a finite ($\{a\}\cup \{\eta\}$)-definable partition into intervals and $\val$-discs.  Let $X_0,\dots, X_l$ be the sets of such a partition of $X$.

The collection of points $c_i$ is given by all the endpoints of those $X_i$ that are intervals (or the sole element in $X_i$ when $X_i$ is a singleton). We also add $0$ as one of the points $c_i$. Incidentally, notice that each $c_i$ is in principle ($\{a\}\cup \{\eta \}$)-definable but, however, Proposition~\ref{lemma3} ensures that each $c_i$ is in fact $a$-definable. 

From the $X_i$'s which are $\val$-discs we will obtain the entries of $\eta'$. If $X_i$ is a $\val$-disc, the left cut made by $X_i$ is given by $A_1=\{y\in R\ |\ y< X_i\}$ and $A_2:=\{y\in R\ |\ \exists x\in X_i(x<y)\}$. Let $B$ be the smallest closed ball such that $A_i\cap B\neq \varnothing$ for $i=1,2$. Proposition~\ref{defpointclosedball} implies that there is a ($\{a\}\cup \{\eta\}$)-definable point $b_0$ in $B$ ---which by Proposition~\ref{lemma3} is in fact $a$-definable. We put $\rvo(b_0)$ as one of the entries of the tuple $\eta'$. The right cut of $X_i$ is treated similarly and gives us a point $b_1$ analogous to $b_0$. We add $\rvo(b_1)$ as an entry of the tuple $\eta'$ too. We repeat this process for all such $X_i$'s and, finally, $0_\subrv$ is also added to $\eta'$.

The two paragraphs above describe how to choose the collection $c_0,\dots ,c_m\in R$ and the tuple $\eta'$ of elements of $\rvu$. We now argue that for any $x\in R$, whether $x$ belongs to $X$ depends only on conditions between the values $\{\rvo(x-c_i)\}_{i\leq m}$ and the parameters $\eta'$. It is enough to ilustrate this for each of the possible shapes of the sets $X_i$. If $X_i$ is the singleton $\{c_j\}$, then $x\in X_i$ if and only if $\rvo(x-c_{j})=0_\subrv$; if $X_i=(c_{j},c_{j+1})$, then $x\in X_i$ if and only if $\rvo(x-c_{j})>0_\subrv$ and $\rvo(x-c_{j+1})<0_\subrv$. Now suppose that $X_i$ is a $\val$-disc and let $b_0$ and $b_1$ be the elements previously associated respectively to the left and right cut defined by $X_i$. If both $b_0\in X_i$ and $b_1\in X_i$, then $x\in X_i$ if and only if $\rvo(b_0)\leq \rvo(x)\leq \rvo(b_1)$. If instead $b_0\notin X_i$ and $b_1\in X_i$, then $x\in X_i$ if and only if $\rvo(b_0)< \rvo(x)\leq \rvo(b_1)$. The remaining two combinations are treated similarly. 

We conclude that whether $\phi(x,a,\eta)$ holds is fully determined by a boolean combination of equations and inequalities between the values $\{\rvo(x-c_i)\}_{i\leq m}$ and the parameters $\eta'$. The statement of the proposition follows.
\end{proof}

\section{The Jacobian property and t-stratifications}
\subsection{Basic conditions for t-stratifications and $b$-minimality}
Besides the Jacobian property, a few other basic conditions are employed in~\cite{immi} to ensure the existence of t-stratifications. The following theorem establishes these.  

\begin{Theorem}\label{assumptcon}
 $\then$ satisfies \cite[Hypothesis 2.21 (1)-(3) and (4'')]{immi}. This is, if $(R,\rve)$ is a model of $\then$ and $A\subseteq R\cup \rve$, then, 
\begin{enumerate}
  \item[\textup{(1)}] $\rvu$ is stably embedded in $(R,O_R)$;
  \item[\textup{(2)}] every $A$-definable function $g:\rvu\lto R$ has finite image; 
  \item[\textup{(3)}] for every $A$-definable set $X\subseteq R$ there exists a finite $A$-definable set $S_0\subseteq R$ such that every ball $B\subseteq R$ disjoint from $S_0$ is either contained in $X$ or disjoint from $X$;
 \item[\textup{(4'')}] for any $A$-definable function $f:X\longrightarrow R$ there exists an $A$-definable function $\chi:X\longrightarrow \rve$ such that for each $q\in \chi(X)$, $f|_{\chi^{-1}(q)}$ is either injective or constant.
\end{enumerate}
\end{Theorem}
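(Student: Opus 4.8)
The plan is to reduce all four items to the $\lrv$-results of Section~\ref{definability}, after first discarding the imaginary sorts. So I would open with a reduction step. Given $A\subseteq R\cup\rve$, each parameter $q\in A\cap\rve$ is the class of some tuple $\xi_{q}\in\rvu^{m}$ under a $0$-$\lrv$-definable equivalence relation, and $q$ is $\xi_{q}$-$\lrv$-definable; replacing each such $q$ by a chosen representative $\xi_{q}$ produces $A'\subseteq R\cup\rvu$ with $\dcl(A)\subseteq\dcl(A')$. Since the $\lrv$-reduct of a model of $\then$ is a model of $\trv$ and each sort of $\rve$ is interpretable over $(R,\rvu)$, every $\lhen$-formula is $\then$-equivalent, after coding, to an $\lrv$-formula; hence Theorem~\ref{qe}, Corollary~\ref{JP0lrv}, Lemma~\ref{lemma3}, Proposition~\ref{finiteimage} and Proposition~\ref{tstrat-1} are all at our disposal over $A'$. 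The running caveat is that the objects produced in (3) and (4'') must be chosen canonically, so that they do not depend on the representatives and therefore come out $A$-$\lhen$-definable; this is always arrangeable.

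For (1) I would invoke quantifier elimination for $\trv$ (Theorem~\ref{qe}). A set $Y\subseteq\rvu^{n}$ definable in $(R,O_{R})$ is $\lrv$-definable over some $A'\subseteq R\cup\rvu$, hence cut out by a quantifier-free $\lrv$-formula $\psi(\bar\eta,\bar a,\bar\xi)$ all of whose free variables $\bar\eta$ are $\rvu$-variables. By Convention~\ref{rvterms} every $L$-term of $\psi$ occurs under $\rvo$, and since $\psi$ has no free field variable these are closed terms $t(\bar a)$, so each $\rvo(t(\bar a))$ is a single element of $\rvu$. Thus $\psi$ is equivalent to a formula of the language of the $\rvu$-sort with parameters $\rvo(t_{1}(\bar a)),\dots,\rvo(t_{r}(\bar a)),\bar\xi$ lying in $\rvu$, so $Y$ is $\rvu$-definable.

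Item (2) is Proposition~\ref{finiteimage} relativized to the base $A\cap R$: once those field parameters are named and the $\rvu$-parameters absorbed into a single $\rvu$-tuple, $g$ is $\rvu$-definable over that base, so for every $\xi$ the value $g(\xi)$ lies in the $L$-definable closure of the base by the routine relativization of Lemma~\ref{lemma3} (whose proof uses only the exchange principle and compactness), and since an infinite $\lrv$-definable subset of $R$ contains an open interval, $g(\rvu)$ must be finite. Item (4'') is immediate from Corollary~\ref{JP0lrv} over $A'$: it produces an $A'$-$\lrv$-definable $\chi:R\to\rvu$ with $f|_{\chi^{-1}(q)}$ constant or injective for each $q$; restrict $\chi$ to $X$ and compose with the inclusion $\rvu\hookrightarrow\rve$.

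The only item requiring a real argument is (3), and its key input is Proposition~\ref{tstrat-1}. Writing $X=\phi(R,a,\eta)$ with $a,\eta$ the parameters, that proposition furnishes points $c_{0},\dots,c_{m}\in R$, each in $\dcl(a)\subseteq\dcl(A)$ by Lemma~\ref{lemma3}, and a quantifier-free $\lrv$-formula $\phi'$ over $\rvu$ with $x\in X\iff\phi'(\rvo(x-c_{0}),\dots,\rvo(x-c_{m}),\eta')$. I would then set $S_{0}:=\{c_{0},\dots,c_{m}\}$, a finite $A$-$\lhen$-definable subset of $R$, and verify the ball condition: if $B$ is a ball with $B\cap S_{0}=\varnothing$ and $x,x'\in B$, then since each $c_{i}\notin B$ while $x,x'\in B$ one obtains $\val(x-x')>\val(x'-c_{i})$, hence $(x-c_{i})/(x'-c_{i})\in 1+M_{R}$ and $\rvo(x-c_{i})=\rvo(x'-c_{i})$ for every $i$; consequently $\phi'$ takes the same value at $x$ and at $x'$, i.e. $x\in X\iff x'\in X$, so $B\subseteq X$ or $B\cap X=\varnothing$. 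This works uniformly for open and for closed balls. The genuine obstacle in the theorem is not any single verification but the parameter bookkeeping of the first paragraph: ensuring that each reduction to $\trv$ and each passage from $\rve$ to $\rvu$ is reversible at the level of definability.
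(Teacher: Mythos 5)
Your arguments for (1), (2), and (3) are correct and essentially coincide with the paper's. For (1) you and the paper both read off stable embeddedness from quantifier elimination and Convention~\ref{rvterms}, using that a formula with no free field variable has only closed $L$-terms under $\rvo$. For (2) you both reduce to Proposition~\ref{finiteimage} after isolating field parameters $A_0=A\cap R$ and a $\rvu$-tuple $\eta$. For (3) you both take $S_0=\{c_0,\dots,c_m\}$ from Proposition~\ref{tstrat-1} and invoke Lemma~\ref{lemma3} to drop the $\rvu$-parameters; your verification that a ball disjoint from $S_0$ is constant with respect to $\rvo(x-c_i)$ is a direct computation replacing the paper's use of the map $\rho(x)=\ulcorner (x-c_i)_i\urcorner$ and its maximal-open-ball fibres, but the two arguments are the same in substance.

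There is, however, a genuine gap in (4''). Corollary~\ref{JP0lrv} gives you a map $\chi'$ that is $(A_0\cup\{\eta\})$-$\lrv$-definable, where $\eta$ is a tuple of $\rvu$-parameters that you chose as a representative; and your first paragraph concedes that you must make the output $A$-definable (not merely $A'$-definable) and asserts "this is always arrangeable." That is precisely where the work lies, and it does not go through by any generic canonicity argument: the map $\chi'$ really depends on $\eta$, Lemma~\ref{lemma3} applies only to elements of the field sort and so gives you nothing here (unlike in item (3), where $S_0\subseteq R$), and a different choice of representative for the same $q\in A\cap\rve$ can yield a genuinely different $\chi'$. The paper's proof handles this with a specific construction: it promotes $\eta$ to a variable $z$, producing an $A_0$-definable family $\chi''(x,z)$, then codes the family of functions $p_x\colon z\mapsto\chi''(x,z)$ by their canonical parameters $\ulcorner p_x\urcorner$, which live in $\rve$ precisely because of the stable embeddedness established in item (1); the resulting $x\mapsto\ulcorner p_x\urcorner$ is $A_0$-definable (hence $A$-definable) and its fibres refine those of $\chi'$. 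This coding step is the real content of (4'') and needs to be supplied explicitly; without it your proof of that item is incomplete.
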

\begin{proof}
 (1) Suppose that $Q=\{\xi\in \rvu ^n\ |\ \phi(\rvo(t_1(a)),\dots ,\rvo(t_m( a)),\xi,\eta)\}$ where $a$ and $\eta$ are tuples of elements of $R$ and $\rvu$, respectively, and $\phi$ is a quantifier-free $\lrv$-formula in the language of the sort $ \rvu$ (see Convention~\ref{rvterms}). If we set $\eta':=(\rvo(t_1(a)),\dots ,\rvo(t_m(a)))\in \rvu^m$, then clearly $Q$ definable in the sort $\rvu$ by the formula $\phi(\eta',z,\eta)$.\newline
 (2) Set $A_0=A\cap R$ and let $\eta$ be a tuple of elements of $\rvu$ such that $g$ is $(A_0\cup\{ \eta\})$-definable. The result follows from Proposition~\ref{finiteimage}.  \newline
 (3) Le $A_0$ and $\eta$ be as above. Assume that $\phi(x,a,\eta)$ is an $\lrv$-formula defining $X$; by Proposition~\ref{tstrat-1} there are an $\lrv$-formula $\phi'(\xi,\eta)$ and $c_0,\dots ,c_m\in R$ such that the formula 
 \[\phi'(\rvo(x-c_0),\dots ,\rvo(x-c_m),\eta)\]
 defines $X$ too. We claim that taking $\{c_0,\dots,c_m\}$ to be the set $S_0$ is enough. Notice first that, in principle, $S_0$ is only $(A_0\cup \{\eta\})$-definable, but Proposition~\ref{lemma3} ensures that $S_0$ is $A_0$-definable (and thus $A$-definable as needed). Let $B\subseteq R$ be a ball such that $B\cap S_0=\varnothing$ and $B\cap X\neq \varnothing$; we need to show that then $B\subseteq X$. Consider the map $\rho:R\lto \rve$ given by $\rho(x)=\ulcorner (x-c_i)_{i\leq m}\urcorner$. Each fibre of $\rho$ is a singleton or a maximal open ball disjoint from $S_0$. Let $b\in B \cap X$, if $F_b:=\rho^{-1}(\ulcorner (b-c_i)_{i\leq m}\urcorner)$ is a singleton, then $b=c_i$ for some $i\leq m$, which contradicts that $B\cap S_0= \varnothing$. Thus $F_b$ is a maximal open ball disjoint from $S_0$; necessarily, $B\subseteq F_b\subseteq X$ as required. 
 \newline
 (4'') 
  Let $A_0$ and $\eta$ be as before. By Proposition~\ref{JP0lrv}, there is an $(A_0\cup \{\eta\})$-definable map $\chi':R \longrightarrow  \rvu^m$ for which the desired conclusion holds. Suppose that $\varphi(x,y,z)$ is a formula with parameters in $A_0$ such that $\varphi(x,y,\eta)$ defines $\chi'$.  Define the map $\chi'':R \times \rvu^l\longrightarrow \rvu ^m\subseteq \rve$ by declaring $\chi''(x,z)=y$ if and only if $\varphi(x,y,z)$. Notice that then  $\chi''$ is $A_0$-definable and for all $x\in R$, $\chi''(x,\eta)=\chi'(x)$. For $x\in R$, set $p_x:\rvu ^l\longrightarrow \rvu^m$ as $p_x(z)=\chi''(x,z)$ and then consider the canonical parameter $\ulcorner p_x\urcorner$. Since $\rvu$ is stably embedded in $(R,O_R)$, we can assume that $\ulcorner p_x\urcorner$ is an element of $\rve$. Hence the map $\chi:R\longrightarrow \rve$ given by $\chi(x)=\ulcorner p_x\urcorner$ is an $A$-definable map. The result follows since every fibre of $\chi$ is contained in a fibre of $\chi'$.
\end{proof}

The notion of \emph{$b$-minimality with centres} was introduced in~\cite[Section 6]{raf2}.
\begin{Theorem}\label{bmintcon}
 $\then$ is $b$-minimal with centres over (the sorts) $\rve$. This is, if  $(R,\rve)$ is a model of $\then$ and $A\subseteq R\cup \rve$, then
  \begin{enumerate}
  \item[\textup{($b_1'$)}] for every $A$-definable set $X\subseteq R$, there exist $A$-definable functions $\chi:X\longrightarrow \rve$ and $c:\chi(X)\lto R$ such that for each $q\in \chi(X)$, $\chi^{-1}(q)-c(q)$ is an $\rvu$-ball;
  \item[\textup{($b_2$)}] there is no definable surjection from any sort $Q$ in $\rve$ to an open ball in $R$;
  \item[\textup{($b_3$)}] for every $X\subseteq R$ and $A$-definable function $f:X\lto R$ there exists an $A$-definable function $\chi:X\longrightarrow \rve$ such that for each $q\in \chi(X)$, $f|_{\chi^{-1}(q)}$ is either injective or constant. 
  \end{enumerate}
\end{Theorem}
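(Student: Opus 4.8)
The plan is to establish the three conditions in turn. Condition $(b_3)$ is literally condition $(4'')$ of Theorem~\ref{assumptcon}, so I would just cite that. For $(b_1')$ I would recycle the combinatorics from the proof of Theorem~\ref{assumptcon}(3): given an $A$-definable $X\subseteq R$, put $A_0:=A\cap R$, and use Proposition~\ref{tstrat-1} together with Lemma~\ref{lemma3} to obtain $A_0$-definable points $c_0,\dots,c_m\in R$ (with $0$ among them) and a tuple $\eta'$ over $\rvu$ such that $X$ is defined by $\phi'(\rvo(x-c_0),\dots,\rvo(x-c_m),\eta')$. Set $S_0:=\{c_0,\dots,c_m\}$ and let $\chi$ be the restriction to $X$ of the map $\rho\colon R\to\rve$ sending $x$ to the class $\ulcorner(x-c_i)_{i\le m}\urcorner$ of $x$ modulo the relation ``$\rvo(x-c_i)=\rvo(x'-c_i)$ for all $i\le m$''. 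As noted in the proof of Theorem~\ref{assumptcon}(3), every fibre of $\rho$ is either a singleton $\{c_j\}$ or a maximal open ball disjoint from $S_0$, and every such ball is contained in or disjoint from $X$; hence each fibre of $\chi$ is either a singleton $\{c_j\}\subseteq X$ or a maximal open ball disjoint from $S_0$ and contained in $X$.

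The substantive point for $(b_1')$ is the centre map $c\colon\chi(X)\to R$. From $q\in\chi(X)$ one recovers the tuple $(\rvo(x-c_0),\dots,\rvo(x-c_m))$, which is constant on $\chi^{-1}(q)$; I would set $c(q):=c_i$ for the least index $i$ maximizing $\vrv(\rvo(x-c_i))$ (with the convention $\vrv(0_\subrv)=\infty$), an $A$-definable function. The claim is then $\chi^{-1}(q)-c(q)=\rvo^{-1}(\xi_q)$, where $\xi_q:=\rvo(x-c_i)$ for $x\in\chi^{-1}(q)$: for a singleton fibre $\{c_j\}$ one gets $i=j$, $\xi_q=0_\subrv$, and the fibre minus $c_j$ is $\{0\}=\rvo^{-1}(0_\subrv)$; for a maximal open ball $B=\{x:\val(x-d)>\gamma\}$ disjoint from $S_0$, maximality forces $\gamma=\max_l\val(d-c_l)=\max_l\val(x-c_l)$ for $x\in B$, so $\val(x-c_i)=\gamma$ and the ultrametric computation yields $y\in B-c_i$ iff $\val(y-(d-c_i))>\val(d-c_i)$ iff $\rvo(y)=\rvo(d-c_i)=\xi_q$. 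Since each $\rvo^{-1}(\xi_q)$ is an $\rvu$-ball, $(b_1')$ follows.

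For $(b_2)$: if some sort $Q$ of $\rve$ admitted a definable surjection onto an open ball $B\subseteq R$, then composing with the canonical surjection from a suitable $\rvu^k$ would give a definable surjection $\rvu^k\to B$. Its graph lies in the product of ``real'' sorts $\rvu^k\times R$; lifting any parameters from sorts of $\rve$ through the relevant canonical maps and using that an $\lhen$-formula all of whose free variables and parameters lie in the sorts $R$ and $\rvu$ is equivalent to an $\lrv$-formula, this graph is $\lrv$-definable, giving an $\lrv$-definable surjection $\rvu^k\to B$. This contradicts the Remark after Proposition~\ref{finiteimage}, whereby such a map has finite image while $B$ is infinite.

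The main obstacle I anticipate is $(b_1')$, precisely the \emph{uniform} choice of centre: one must select, among $c_0,\dots,c_m$, the element whose subtraction collapses a given fibre to a single $\rvo$-fibre, definably in $q$ and simultaneously for the singleton fibres and for the balls supplied by Proposition~\ref{tstrat-1}; the bookkeeping in the second paragraph is what makes this uniform rule correct. Conditions $(b_2)$ and $(b_3)$ are then routine given Theorem~\ref{assumptcon} and the earlier results on $\lrv$-definability.
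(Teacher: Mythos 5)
Your proposal is correct and follows essentially the same route as the paper: $(b_3)$ is cited directly from Theorem~\ref{assumptcon}(4''), $(b_2)$ is reduced (by the standard parameter-lifting manoeuvre) to the finiteness of images of $\lrv$-definable functions $\rvu^k\to R$ from the remark after Proposition~\ref{finiteimage}, and for $(b_1')$ you build $\chi$ from the finite set $S_0=\{c_0,\dots,c_m\}$ supplied by Proposition~\ref{tstrat-1}/Theorem~\ref{assumptcon}(3) and take the centre $c(q)$ to be the nearest $c_i$, exactly as the paper does (the paper characterises $c(\chi(b))$ by $\val(b-c(\chi(b)))=\max\{\val(b-t):t\in S_0\}$ and invokes compactness to get definability, which amounts to your tie-breaking by least index). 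Your ultrametric verification that $\chi^{-1}(q)-c(q)=\rvo^{-1}(\xi_q)$ matches the paper's final computation; the only cosmetic difference is that you spell out $(b_2)$ and $(b_3)$ while the paper treats them as already established.
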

\begin{proof}
 We only need to provide an argument for ($b_1'$). Let $S_0$ be the finite $A$-definable set obtained through Theorem~\ref{assumptcon} (3). We let $\chi:X\lto \rve$ be given by $\chi(x)=\ulcorner \rvo(x-S_0)\urcorner$, for each $x\in X$. Then for any $q\in \chi(X)$, $\chi^{-1}(q)$ is either a singleton $\{s\}$ with $s\in S_0$ or a maximal ball disjoint from $S_0$. By compactness, the centre $c:\chi(X)\lto R$ is defined such that $c(\chi(b))$ satisfies $\val(b-c(\chi(b)))=\max\{\val(b-t)\ |\ t\in S_0 \}$. If $\chi^{-1}(q)$ is a singleton, then clearly $\chi^{-1}(q)-c(q)=\{0\}=\rvo^{-1}(0)$. On the other hand, if $\chi^{-1}(q)$ is an open ball, where, say, $q=\chi(b)$, we have $\chi^{-1}(q)=B(b,>\val(b-c(q)))=\{x\in R\ |\ \rvo(x-c(q))=\rvo(b-c(q))\}=c(q)+\rvo^{-1}(\rvo(b-c(q)))$.
\end{proof}

From Theorem~\ref{bmintcon} we obtain a theorem on cell decomposition \emph{with centres} (\cite[Theorem 6.4]{raf2}).

\begin{Remark}
 Above we proved that Theorem~\ref{assumptcon} (3) implies Theorem~\ref{bmintcon} ($b_1'$). The converse is in fact true. 
\end{Remark}
\begin{proof}
 Suppose that $\chi$ and $c$ are $A$-definable as in ($b_1'$). By the typical manoeuvre we can assume that $\chi$ is $\lrv$-definable; then, by Lemma~\ref{finiteimage} the $A$-definable set $S_0:=c(\chi(X))$ is finite. Let $B\subseteq R$ be any ball disjoint from $S_0$ and suppose that $B\cap X\neq \emptyset$. We want to show that $B\subseteq X$.  First, we claim that for each $x\in B$, $\rvo(x-c(\chi(b)))=\rvo(b-c(\chi(b)))$. If this fails, then there is $x\in B$ such that $\val(b-c(\chi(b)))\geq \val(x-b)$ and so $c(\chi(b))\in B\cap S_0$, a contradiction. To finish, notice that since $b\in \chi^{-1}(\chi((b))$, $\chi^{-1}(\chi(b))-c(\chi(b))=\rvo^{-1}(\rvo(b-c(\chi(b))))$, and so the claim implies that $B\subseteq \chi^{-1}(\chi(b))\subseteq X$.
\end{proof}

\subsection*{The Jacobian property for $n\geq 1$}\label{jacpropsec}
In this section we prove that the full Jacobian property holds for $\then$ ---keeping in mind the power boundedness of $T$. The proof is inspired by that of I. Halupczok of the Jacobian property for valued fields with analytic structure,~\cite[Subsection 5.3]{immi}. The strategy is as follows. We start an inductive argument on $n$. In the inductive step we assume that $n\geq1$ and that $\then$ has the Jacobian property upto $n-1$. By this assumption and Theorem~\ref{assumptcon}, \cite[Theorem 4.12]{immi} ensures that for any $m\leq n$ t-stratifications of definable maps $R^m\lto \rve$ exist. If $f:R^n\lto R$ is an $A$-definable function where $A\subseteq R\cup \rve$, we obtain an $A$-definable map $\rho:R^n\lto \rve$ such that on each fibre $\rho^{-1}(q)$, $f$ equals the restriction of an $L$-definable function, $\jac(f)$ exists and $\rvo(\jac(f))$ is constant. We then refine a t-stratification for $\rho$ so that the fibres of \emph{its rainbow} $\chi$ have a particular description. At the final step, a simple calculation is performed to show that $f$ has the Jacobian property on each pertinent fibre of $\chi$.

The following is an easy lemma, analogue of~\cite[Lemma 5.8]{immi}, which allows us to eventually make the Jacobian property rest on properties of $L$-definable functions.
\begin{Lemma}\label{lemma5.8}
Let $g:O_R\longrightarrow O_R$ be an $L$-definable differentiable function such that for all $x\in O_R$, $g'(x)\in O_R$ and $\res(g'(x))$ is constant. Then for any $x,x'\in O_R$ with $x\neq x'$, 
\begin{equation}
\val(g(x)-g(x')-g'(0)(x-x'))>\val(x-x'). \notag 
\end{equation}
\end{Lemma}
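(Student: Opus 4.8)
The statement is essentially a mean-value-type estimate: if the derivative of an $L$-definable function $g:O_R\to O_R$ stays in $O_R$ and has constant residue, then $g$ differs from its first-order Taylor approximation at $0$ by a term of valuation strictly larger than $\val(x-x')$. The natural approach is to apply the Taylor expansion / mean value theorem available in o-minimal (real closed) fields to the $L$-definable function $g$, and then read off the valuation estimate from the hypothesis on $\res(g')$. First I would use o-minimality of $T$ (more precisely, that $R\models T$ is a model of $\rcf$ with the relevant monotonicity and differentiability available by Proposition~\ref{sharperwomono}-type reasoning, or directly the $C^1$ mean value theorem for definable functions) to write, for distinct $x,x'\in O_R$,
\begin{equation*}
g(x)-g(x') = g'(\xi)(x-x')
\end{equation*}
for some $\xi$ strictly between $x$ and $x'$ (hence $\xi\in O_R$).

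Then the left-hand side of the desired inequality becomes $\val\big((g'(\xi)-g'(0))(x-x')\big) = \val(g'(\xi)-g'(0)) + \val(x-x')$, so it suffices to show $\val(g'(\xi)-g'(0))>0$, i.e. that $g'(\xi)-g'(0)\in M_R$. But this is exactly the hypothesis: $g'(\xi), g'(0)\in O_R$ and $\res(g'(\xi)) = \res(g'(0))$ since $\res\circ g'$ is constant on $O_R$, so their difference lies in the maximal ideal. Combining, $\val(g(x)-g(x')-g'(0)(x-x')) = \val(g'(\xi)-g'(0)) + \val(x-x') > \val(x-x')$, which is the claim.

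\textbf{Main obstacle.} The only delicate point is justifying the mean value theorem step in this setting: the field $R$ is real closed but not $\mathbb{R}$, so one cannot quote the classical analytic MVT verbatim. However, the mean value theorem (or at least the weaker statement that a $C^1$ definable function satisfies $g(x)-g(x') = g'(\xi)(x-x')$ for some intermediate $\xi$, which follows from Rolle's theorem) is a standard consequence of o-minimality for definable functions on an interval, and $g$ restricted to the interval with endpoints $x,x'$ is $L$-definable and differentiable by hypothesis. If one prefers to avoid MVT entirely, an alternative is to use the Taylor expansion with remainder for $L$-definable $C^1$ functions together with the o-minimal bound on the remainder in terms of $\sup|g'(\xi)-g'(0)|$ over the interval; since this sup is attained (o-minimality) at some point of $O_R$ where $g'$ has the same residue as $g'(0)$, the sup lies in $M_R$, giving the same conclusion. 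Either way, the crux is purely that "constant residue of $g'$" forces $g'(\xi)-g'(0)\in M_R$ for all relevant $\xi$, and the rest is bookkeeping with the valuation.
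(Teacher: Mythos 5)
Your proof is correct and takes essentially the same route as the paper: both reduce the claim to the Mean Value Theorem for $L$-definable functions and then use that constant residue of $g'$ forces $g'(\xi)-g'(0)\in M_R$. The paper merely normalizes first by replacing $g$ with $g(x)-g'(0)x$ so that $g'(0)=0$ and $\val(g'(x))>0$ everywhere, which is the same bookkeeping you do without the normalization.
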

\begin{proof}
 We may assume that $g'(0)=0$, otherwise we can replace $g$ by the function $g(x)-g'(0)x$. Then for any $x\in O_R$, $\val(g'(x))>0$. By the Mean Value Theorem, for any $x$ and $x'$ as in the hypothesis, there exists $x''\in O_R$ such that $g(x)-g(x')=g'(x'')(x-x')$. Hence $\val(g(x)-g(x'))>\val(x-x')$ as required.
\end{proof}

 Let $B_0\subseteq K^n$(typically an open ball or the whole of $R^n$). A t-stratification of a map $\rho:B_0\lto \rve$ is an  $\lhen$-definable partition $S_0,\dots, S_n$ of $B_0$ such that for each $d\leq n$: (1) $\dim(S_d)\leq d$ and (2) on each ball $B$ disjoint from $S_0\cup \dots \cup S_d$, the characteristic maps\footnote{The characteristic map of $X$ is the function sending elements of $X$ to 1 and non-elements to 0.} $1_{S_0},\dots ,1_{S_n}$ along with $\rho$ are \emph{almost translation invariant} in the direction of a $d$-dimensional subspace of $R^n$. In condition (2) we say that the stratification $S_0,\dots S_n$ \emph{reflects} $\rho$. A t-stratifications of a set is a t-stratification of its characteristic map. The reader is referred to~\cite{immi} for full details and to~\cite{immi2} for an accessible introduction to t-stratifications; since our focus here is on proving the Jacobian property, we limit ourselves to cite results from~\cite{immi} where needed. We denote t-stratifications as $(S_i)_{i\leq n}$ and, for each $d\leq n$, we use $S_{\leq d}$ and $S_{\geq d}$ to denote $S_0\cup \dots \cup S_d$ and $S_d\cup \dots \cup S_n$ respectively.

We fix a set of parameters $A\subseteq R\cup \rve$ and an $A$-definable set $B_0\subseteq R^n$. The \emph{rainbow} of the t-stratification $(S_i)_{i\leq n}$ of $B_0$ is the map $B_0\lto \rve$ given by $x\longmapsto \ulcorner (x-S_i)_{i\leq n} \urcorner$. Observe that if $(S_i)_{i\leq n}$ is $A$-definable, so is its rainbow. If $V$ is a vector subspace of $\overline R^n$, a coordinate projection $\pi:R^n\lto R^d$ is said to be an \emph{exhibition} of $V$ if $\res\circ \pi$ is an isomorphism between $V\cap O^d_R$ and $\overline R^d$. The \emph{affine direction} of $X\subseteq R^n$ is the subspace of $\overline R$, denoted as $\afd(X)$, generated by the set $\{\res(x-y)\ |\ x,y\in X\cap O_R^n\}$.  By~\cite[Lemma 4.3]{immi}, if $C\subseteq S_j$ is a fibre of the  rainbow of the $A$-definable t-stratification $(S_i)_{i\leq n}$ and $\pi$ is an exhibition of $\afd(C)$, then there is an $A$-definable map $c:\pi(C)\longrightarrow R^{n-j}$ such that $C=\graph(c)$. We keep this notation in the following definition. 

\begin{Definition}
  Let $(S_i)_{i\leq n}$ be a t-stratification (of $B_0$) and $d\leq n$. We say that $(S_i)_{i\leq n}$ has the property $(*)_d$ if for any $j\geq d$ and fibre $C\subseteq S_j$ of the rainbow of $(S_i)_{i\leq n}$, the corresponding function $c:\pi(C)\longrightarrow R^{n-j}$ is the restriction of an $L$-definable differentiable function to $\pi(C)$.
\end{Definition}

When not specified otherwise, by $(S_i)_{i\leq n}$ being a t-stratification we mean that $(S_i)_{i\leq n}$ is a t-stratification of the fixed set $B_0$.
\begin{Lemma}
 Assume that $\then$ has the Jacobian property upto $n-1$. If the $A$-definable t-stratification   $(S_i)_{i\leq n}$ has property $(*)_{d+1}$, then there is an $A$-definable t-stratification $(S'_i)_{i\leq n}$ that respects $(S_i)_{i\leq n}$ and has property $(*)_{d}$.
\end{Lemma}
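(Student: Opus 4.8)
\emph{Proof plan.} The idea is to isolate exactly what $(*)_d$ demands over $(*)_{d+1}$, namely control of the coordinatizing function $c\colon\pi(C)\lto R^{n-d}$ for each rainbow fibre $C\subseteq S_d$, and to achieve it in two stages: first cut $S_d$ into finitely many $A$-definable pieces on which the relevant coordinatizing function is literally the restriction of an $L$-definable differentiable function, then apply the refinement apparatus of~\cite{immi} to replace $(S_i)_{i\leq n}$ by a t-stratification whose rainbow is compatible with this partition.

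For the first stage I would do the following. There are only finitely many coordinate projections $\pi\colon R^n\lto R^d$, and for $x\in S_d$ whether a given $\pi$ exhibits $\afd(C)$, with $C$ the rainbow fibre through $x$, is an $A$-definable condition constant on $C$; so $S_d$ splits $A$-definably into finitely many pieces each carrying a fixed such $\pi$. Fix a piece $P$ with its $\pi$, write $\pi^\perp$ for the complementary projection, and --- exactly as in the proof of Theorem~\ref{assumptcon}~(4''), using stable embeddedness of $\rvu$ to re-encode canonical parameters at the end --- reduce to the case where all sets and functions in sight are $\lrv$-definable over $A_0\cup\Lambda$ with $A_0=A\cap R$ and $\Lambda$ a finite tuple from $\rvu$. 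After a further $A$-$\lrv$-definable subdivision I may assume $\pi|_P$ is injective (standard dimension theory: $\dim P\leq d$, and finite fibres of $\pi|_P$ can be separated definably). Then $\psi:=\pi^\perp\circ(\pi|_P)^{-1}\colon\pi(P)\lto R^{n-d}$ is $A$-$\lrv$-definable and $\pi^\perp|_P=\psi\circ\pi$. Applying the multivariate form of Proposition~\ref{sharperwomono} (Remark~2 after it), componentwise in the $n-d$ outputs, together with $C^1$-cell decomposition in the o-minimal theory $T$ to secure differentiability on the relevant pieces --- harmless because the sets $\pi(C')$ that will occur are open in $R^d$, their affine directions being $d$-dimensional --- yields a finite $A$-$\lrv$-definable partition of $\pi(P)$ and, for each part $D$, an $L$-definable differentiable $g_D\colon R^d\lto R^{n-d}$ with $\psi|_D=g_D|_D$. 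Pulling these back by $\pi$ and ranging over all pieces and all projections gives a finite $A$-definable partition $E_1,\dots,E_m$ of $S_d$, each $E_k$ equipped with a projection $\pi_k$ and an $L$-definable differentiable $g_k$ such that $\pi_k^\perp|_{E_k}=g_k\circ\pi_k|_{E_k}$.

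For the second stage I would invoke~\cite[Theorem 4.12]{immi}: since $\then$ has the Jacobian property up to $n-1$ and satisfies Theorem~\ref{assumptcon}, its relative version furnishes an $A$-definable t-stratification $(S'_i)_{i\leq n}$ respecting $(S_i)_{i\leq n}$ and whose rainbow in addition reflects $1_{E_1},\dots,1_{E_m}$. To verify $(*)_d$, let $C'\subseteq S'_j$ be a rainbow fibre of $(S'_i)_{i\leq n}$ with $j\geq d$. Because $(S'_i)_{i\leq n}$ refines $(S_i)_{i\leq n}$, $C'$ sits inside a rainbow fibre $C$ of $(S_i)_{i\leq n}$, say $C\subseteq S_{j_0}$ with $j_0\geq j\geq d$, and by~\cite[Lemma 4.3]{immi}, $C'=\graph(c_C|_{\pi_C(C')})$ where $c_C$ coordinatizes $C$ and $\afd(C')\subseteq\afd(C)$. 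If $j_0\geq d+1$, property $(*)_{d+1}$ of $(S_i)_{i\leq n}$ gives that $c_C$ is the restriction of an $L$-definable differentiable map; composing with the change of projection from $\pi_C$ to an exhibition of $\afd(C')$ --- differentiable, and invertible on the relevant set by the exhibition condition and the inverse function theorem --- shows the coordinatizing function of $C'$ is again the restriction of an $L$-definable differentiable map. If $j_0=d$, then $\afd(C')=\afd(C)$ (both $d$-dimensional, one contained in the other), $C\subseteq S_d$, and $C'\subseteq E_k$ for a unique $k$; hence $\pi_k$ exhibits $\afd(C')$ and $\pi_k^\perp|_{E_k}=g_k\circ\pi_k$ forces $C'=\graph(g_k|_{\pi_k(C')})$ with $g_k$ $L$-definable differentiable. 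This establishes $(*)_d$.

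The main obstacle I expect is the first stage: producing, in an $A$-definable and dimension-respecting way, the finite family of $L$-definable differentiable coordinatizations $g_k$ of $S_d$. This forces one to thread together (i) the passage from $\lhen$- to $\lrv$-definability so that Proposition~\ref{sharperwomono} applies while the output stays $A$-$\lhen$-definable, (ii) the coordinate-projection bookkeeping that turns $\pi^\perp|_{S_d}$ into an honest function on a subset of $R^d$, and (iii) the differentiability upgrade of the $L$-definable pieces on exactly the projected sets that arise. Once this is in place, the remaining work is essentially an invocation of the machinery of~\cite{immi}, the only genuine subtlety being the stability of $(*)_{d+1}$ under further subdivision of a fibre, which rests on the inverse function theorem together with the meaning of ``exhibition of the affine direction''.
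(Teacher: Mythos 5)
The proposal captures the overall two-stage shape of the paper's argument --- first a definable subdivision of $S_d$ making the coordinatizing functions $c$ restrictions of $L$-definable differentiable functions, then an invocation of the existence theorem for t-stratifications --- but diverges in detail in both stages, and the second stage has a genuine gap.

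In the first stage you globalize: push all of $S_d$ down by a coordinate projection $\pi$, arrange injectivity of $\pi|_P$ by subdivision, and invert. The paper never has to do this, because it works fibre by fibre: for each rainbow fibre $C\subseteq S_d$ and each exhibition $\pi$ of $\afd(C)$, \cite[Lemma 4.3]{immi} already gives that $\pi|_C$ is a bijection onto $\pi(C)$ with $\ulcorner C\urcorner$-definable inverse $c$, so Proposition~\ref{sharperwomono} (multivariate version) is applied directly to $c$, the resulting maps are combined over all exhibitions by taking a product, and a compactness argument produces the single $A$-definable refinement $\chi:S_d\lto\rve$. Your route has to grapple with the fibres of $\pi|_{P}$, which are not singletons in general; you wave this away via ``standard dimension theory,'' but a definable separation of finite fibres uniformly over $\pi(P)$ is not automatic and is precisely what the per-fibre use of \cite[Lemma 4.3]{immi} sidesteps. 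You also fix one exhibiting projection per piece, whereas the paper deliberately takes the product over all exhibitions so that the end result works simultaneously for every $\pi$ exhibiting $\afd(C)$.

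The more serious issue is in the second stage. Invoking \cite[Theorem 4.12]{immi} gives an $A$-definable t-stratification $(T_i)_{i\leq n}$ reflecting $(S_i)_{i\leq n}$ and $\chi$, but it does not give $T_j\subseteq S_j$ for $j\geq d$. The paper obtains that inclusion by additionally applying \cite[Theorem 4.20]{immi}, setting $S'_{\leq j}:=T_{\leq j}$ for $j<d$ and $S'_{\leq j}:=S_{\leq j}\cup T_{\leq d-1}$ for $j\geq d$. This is what makes the verification of $(*)_d$ painless: every new rainbow fibre $C'\subseteq S'_j$ ($j\geq d$) then lies inside an old rainbow fibre $C\subseteq S_j$ of the \emph{same} index $j$, so $\afd(C')=\afd(C)$, the exhibiting projections agree, and $c'$ is literally the restriction of $c$. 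Your verification instead has to allow $j_0>j$ and appeals to a change of projection justified by the inverse function theorem; but that $\pi'\circ(\mathrm{id},\tilde c)$ has an $L$-definable differentiable inverse \emph{on all of} $\pi'(C')$ is not automatic --- the exhibition condition is about the residue field and does not by itself make the Jacobian of the change of projection a unit everywhere. That step is a real gap, and the paper's use of \cite[Theorem 4.20]{immi} is exactly how it is avoided.
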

\begin{proof}
Let $C\subseteq S_d$ be a fibre of the rainbow of $(S_i)_{i\leq n}$, $\pi$ and $c:\pi(C)\longrightarrow R^{n-d}$ as above. By Proposition~\ref{sharperwomono},  there is a $\ulcorner C\urcorner$-definable map $\chi_{\pi}:\pi(C)\longrightarrow \rve$ such that on any $\chi_\pi$-fibre $c$ is the restriction of an $L$-definable differentiable function. By composing with $\pi:C\longrightarrow \pi(C)$ we obtain a definable map $C\longrightarrow \rve$. Now we do this for all exhibitions of $\afd(C)$, obtaining a collection of maps. We let $\chi _{_C}:C\longrightarrow \rve$ be the product of all these maps. Note that $\chi_{_C}$ is $\ulcorner C\urcorner$-definable. We perform this construction for all the fibres of the rainbow of $(S_i)_{i\leq n}$ contained in $S_d$. Using that these fibres cover the whole of $S_d$ and a compactness argument, we obtain a single $A$-definable map $\chi:S_d\longrightarrow \rve$ such that for any $C, \pi$ and $c$ as above and fibre $X\subseteq C$ of $\chi$, the map $c|_{\pi(X)}$ equals the restriction of an $L$-definable differentiable function to $\pi(X)$. 

By Theorem~\ref{assumptcon}, the assumption on $\then$ in the hypothesis and~\cite[Theorem 4.12]{immi}, there is an $A$-definable t-stratification $(T_i)_{i\leq n}$ reflecting both $(S_i)_{i\leq n}$ and $\chi$.  By~\cite[Theorem 4.20]{immi}, setting $S'_{\leq j}:=T_{\leq j}$ for $j<d$ and $S'_{\leq j}:=S_{\leq j}\cup T_{\leq d-1}$ for $j\geq d$, defines an $A$-definable t-stratification reflecting both $(S_i)_{i\leq n}$ and $\chi$ and satisfying that $S_j'\subseteq S_j$ whenever $j\geq d$. We claim that $(S_i')_{i\leq n}$ has property $(*)_d$. 

Let $C'\subseteq S_j'$ be a fibre of the rainbow of $(S_i')_{i\leq n}$, for some $j\geq d$. Then there is a fibre $C\subseteq S_j$ of the rainbow of $(S_i)_{i\leq n}$, such that $C'\subseteq C$. It then follows that $\afd(C')=\afd(C)$ and, if $\pi$ is an exhibition of this subspace, the corresponding map $c':\pi(C')\longrightarrow R^{n-j}$ is the restriction of the map $c:\pi(C)\longrightarrow R^{n-j}$ to $\pi(C')$. If $j>d$, by property $(*)_{d+1}$ for $(S_i)_{i\leq n}$, $c$ is the restriction to $\pi(C)$ of an $L$-definable differentiable function, which then obviously implies a similar conclusion for $c'$. If instead $j=d$, since $(S_i')_{i\leq n}$ reflects $\chi$, $\chi$ is constant on $C'$ and the construction of $\chi$ ensures that $c'$ is the restriction to $\pi(C')$ of an $L$-definable differentiable function.  
\end{proof}

Notice that by the proof above $(S_i')_{i\leq n}$ can be taken to be a refinement of $(S_i)_{i\leq n}$, i.e. for each $j\leq n$, $S_j'\subseteq S_j$.

\begin{Remark}\label{fullrefinement}
Under the same assumptions in the last lemma, for any $A$-definable t-stratification $(S_i)_{i\leq n}$ there exists an $A$-definabe t-stratification $(S_i')_{i\leq n}$ reflecting and refining $(S_i)_{i\leq n}$ for which, for any $j\leq n$, fibre $C\subseteq S_j'$ of the rainbow of $(S_i')_{i\leq n}$ and exhibition $\pi$ of $\afd(C)$, the map $c:\pi(C)\longrightarrow R^{n-j}$ is the restriction of an $L$-definable differentiable map to $\pi(C)$.
\end{Remark}

The `almost'  in \emph{almost translation invariant} stands for the allowance of some rigid maps, which, after being applied, actualise translation invariability. These maps are bijections $\varphi:X\lto Y$, where $X,Y\subseteq R^n$, such that $\rvo(\varphi(x)-\varphi(y))=\rvo(x-y)$ for all $x,y\in X$. Each such map is called a \emph{risometry} between $X$ and $Y$. A risometry $\varphi:X\lto X$ \emph{respects} a map $\rho:X\lto \rve$ if $\rho\circ \varphi=\rho$.  \gl{n} stands for the set of $n\times n$ invertible matrices with entries from $O_R$. 

\begin{Proposition}\label{refinerainbow}
 Assume that $\then$ has the Jacobian property upto $n-1$. Let $(S_i)_{i\leq n}$ be an $A$-definable t-stratification. Then there is an $A$-definable t-stratification $(S_i')_{i\leq n}$ refining $(S_i)_{i\leq n}$ such that whenever $C\subseteq S_n'$ is a fibre of the rainbow of $(S_i')_{i\leq n}$, there exist open balls $B_1,\dots, B_n\subseteq R$ and an $L$-definable differentiable bijection $h:B_1\times \dots \times B_n\longrightarrow C$ which can be written as a composition of a risometry and a matrix in \gl{n}. 
\end{Proposition}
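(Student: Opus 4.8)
The plan is to reduce everything to the analysis of a single fibre $C\subseteq S_n$ of the rainbow and to show that, after a suitable refinement, every such $C$ is a ``skew box''. As in the proof of the preceding lemma, the refinements will always be obtained by feeding an auxiliary $A$-definable map into \cite[Theorem 4.12]{immi} and combining it with $(S_i)_{i\leq n}$ via \cite[Theorem 4.20]{immi}; so $A$-definability of the final t-stratification comes for free, and the real content is the structural description of $C$.

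First I would invoke Remark~\ref{fullrefinement} to assume that for every $j\leq n$ each rainbow fibre contained in $S_j$ is the graph over $\pi(C)$ of an $L$-definable differentiable map, $\pi$ an exhibition of $\afd(C)$; by \cite[Lemma 4.3]{immi} this forces $\afd(C)=\overline R^n$ when $C\subseteq S_n$. Now fix such a $C$. After translating a chosen point of $C$ to the origin (a risometry), $\afd(C)=\overline R^n$ yields $u_1,\dots,u_n\in C\cap O_R^n$ with $\res u_1,\dots,\res u_n$ an $\overline R$-basis, arranged (using the $(n-1)$-dimensional translation-invariance direction supplied by the t-stratification) so that $u_1,\dots,u_{n-1}$ lift a basis of that direction; then the matrix $M_C$ with columns $u_1,\dots,u_n$ lies in \gl{n} because $\res(\det M_C)\neq 0$. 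Exactly as in the lemma above (products over all admissible choices, canonical parameters $\ulcorner\,\cdot\,\urcorner$, compactness) one refines so that $x\mapsto M_{C_x}$, together with the chosen translation, is $A$-definable on $S_n$. Since the graph-structure and the almost-translation-invariance properties are preserved under the affine change $z\mapsto M_C^{-1}z$, it then suffices to produce an $L$-definable differentiable risometry from a box $B_1\times\cdots\times B_n$ of open balls onto the ``axis-straightened'' fibre $C':=M_C^{-1}C$.

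For this I would argue by induction on $n$, the case $n=1$ being immediate since a rainbow fibre contained in $S_1$ is an open ball. In general $C'$ is disjoint from $S'_{\leq n-1}$, so on each ball it meets the defining clause of a t-stratification makes $C'$ and the rainbow almost translation invariant in an $(n-1)$-dimensional direction, which after the straightening is $\overline R^{n-1}\times\{0\}$; peeling this off (the risometry in ``almost'' absorbing the admissible translation, and a further refinement forcing a single ball-direction rather than a union) exhibits $C'$, up to an $L$-definable differentiable risometry and after shrinking, as a product $D\times B_n$ with $D\subseteq R^{n-1}$ a rainbow fibre of the induced $(n-1)$-dimensional t-stratification and $B_n\subseteq R$ an open ball. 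The inductive hypothesis applied to $D$ — legitimate because the Jacobian property in dimensions $\leq n-1$ is granted by the assumption on $\then$ — gives a risometric parametrization $B_1\times\cdots\times B_{n-1}\to D$ of the required form; the way the $B_n$-slice sits over $D$ is governed by an $L$-definable ``centre'' function which, by Proposition~\ref{sharperwomono} and Lemma~\ref{lemma5.8} and after shrinking $B_n$, can be taken $L$-definable, differentiable and flat enough to be folded into the risometry. Finally, all the intervening risometries and $\gl{n}$-changes are assembled into a single risometry and a single matrix: risometries are closed under composition, and a $\gl{n}$-conjugate of a risometry is again a risometry (because a matrix in \gl{n} permutes the fibres of $\rvo:R^n\to\rvu^{(n)}$, so $\rvo(M w)$ and $\rvo(w)$ determine each other), hence all the matrices may be collected on one side; this presents $h:B_1\times\cdots\times B_n\to C$ as the composition of a matrix in \gl{n} with an $L$-definable differentiable risometry.

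The main obstacle is the structural step in the third paragraph: extracting the decomposition $C'\simeq D\times B_n$ from the translation-invariance clause calls for careful use of the apparatus of \cite{immi} (Section 4), in particular verifying that the $(n-1)$-dimensional slices genuinely carry a t-stratification to which the induction applies, and that every auxiliary object (the base point, the matrix $M_C$, the centre functions, the balls $B_i$) can be chosen $A$- (resp.\ $L$-) definably, so that the many small risometries and coordinate changes really do collapse into the single risometry and single matrix demanded by the statement.
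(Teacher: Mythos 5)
Your proposal is structurally different from the paper's, and the key step — the product decomposition $C'\simeq D\times B_n$ — is precisely where it breaks down. The difficulty is not merely that this step ``calls for careful use of the apparatus,'' as you acknowledge; the picture you are working with is wrong. You treat the translation-invariance as if $C$ carried a single $(n-1)$-dimensional direction that could be normalised once by a matrix $M_C$ and then peeled off. But $\tsp$ is a function of the ball, and in the set-up of the paper (where one arranges $S_0'\neq\emptyset$ and takes $B$ to be the maximal ball $\subsetneq B_0$ containing $C$), one has $\tsp_B((S_i')_{i\leq n})=\{0\}$: at the scale of the smallest ball enclosing $C$ there is \emph{no} nontrivial translation direction. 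Nontrivial directions only appear on strictly smaller balls disjoint from $S_d'$ for successively larger $d$, and they accumulate one layer of the scaffolding at a time. Consequently there is no canonical $(n-1)$-dimensional direction to send $u_1,\dots,u_{n-1}$ to, nor a single ball on which the ``defining clause'' gives you $\overline R^{n-1}\times\{0\}$ after straightening, nor any reason for $C'$ to split off a factor $B_n$ globally.

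The paper's proof organises exactly this scale-by-scale accumulation: it runs an induction on the exhibition dimension $d$ (for fixed $n$), maintaining a coordinate projection $\pi_d:R^n\to R^d$ and a radius $\lambda_d$ with four invariants (fibre inclusions into balls $B_q$ of radius $\lambda_d$; rainbow-respecting risometries between the $B_q$; $\pi_d$ exhibits $\tsp_B$ for the appropriate ball; and a box parametrisation of $\pi_d(C)$). The step from $d$ to $d'>d$ isolates, inside each $B_q$, the maximal ball $D_q$ around $C\cap B_q$ disjoint from $S_d'$, reads off the new translation direction $V$ from $\tsp$ there, and sets $d'=d+\dim V$. Crucially, the ``new'' coordinates of the box are glued to the old ones using the rainbow fibre $\tilde C\subseteq S_d'$ hit by $B_q$ and its graphing function $\tilde c$, via the shear $(q,y)\mapsto(q,\tilde c(q)+y)$. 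That this shear is the composition of a risometry and a matrix in \gl{n} is~\cite[Lemma 4.3]{immi}; this is the correct tool here, not Lemma~\ref{lemma5.8}, which is a one-variable Mean Value Theorem estimate used later in the proof of Theorem~\ref{jpforthen} and plays no role in obtaining the box parametrisation. So the two points you would need to repair are: (a) replace the single-direction peeling by the $d$-indexed projection-plus-radius induction, with the lower strata $S_d'$ supplying the anchors; and (b) replace the appeal to Lemma~\ref{lemma5.8} by~\cite[Lemma 4.3]{immi} when absorbing the graph of $\tilde c$ into the risometry.
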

\begin{proof}
 Let $(S_i')_{i\leq n}$ be as described in Remark~\ref{fullrefinement}. Let $C\subseteq S_n'$ be a fibre of its rainbow. Suppose that $S_0'\neq \emptyset$ (we could set $S_0'=\{x_0\}$ for any $A$-definable point $x_0\in B_0$ if necessary). Then by~\cite[Lemma 4.21]{immi}, $C$ is entirely contained in a ball $B\subsetneq B_0$ and by taking $B$ maximal one ensures that $\tsp_B((S_i')_{i\leq n})=\{0\}$. This situation corresponds to the case $d=0$ (with $\pi_0:R^n\longrightarrow R^0\ (=\{0\}$) and $\lambda:=\rad(B)$) in the following set of conditions. For $d\leq n$, there are a coordinate projection $\pi=\pi_d:R^n\longrightarrow R^d$ and $\lambda=\lambda_d\in \Gamma$ such that:
 \begin{enumerate}[(i)]
  \item for every $q\in \pi(C)$, there is a ball $B_q$ (regarded as a subset of $\{0\}^{d}\times R^{n-d}$) of radius $\lambda$ such that $C\cap \pi^{-1}(q)\subseteq B_q\subseteq \pi^{-1}(q)$;
  \item for any $q,q'\in \pi(C)$, there is a definable risometry between $B_q$ and $B_{q'}$ respecting the rainbow of $(S_i')_{i\leq n}$;
  \item if $B\subseteq R^n$ is the ball of radius $\lambda$ containing $B_q$, then $\pi$ exhibits $\tsp_B((S_i')_{i\leq n})$;
  \item there are open balls $B_1,\dots, B_d\subseteq R$ and an $L$-definable differentiable bijection $h:B_1\times \dots \times B_d\longrightarrow \pi(C)$ which can be written as the composition of a risometry and a matrix in \gl{d}.
 \end{enumerate}
 Clearly, when $d=n$ (iv) gives the result.  Thus we work inductively, recalling that as stated before these conditions hold when $d=0$.  Suppose that for a fixed $d<n$, there are $\pi$ and $\lambda$ as above satisfying conditions (i)-(iv). We show that there is $d'>d$ for which there are $\pi':R^n\lto R^{d'}$ and $\lambda'\in \Gamma$ such that (i)-(iv) hold.

 For notational simplicity, we assume that $\pi$ is the projection to the first $d$ coordinates. Since $\pi$ exhibits $\tsp_B((S_i')_{i\leq n})$ and by~\cite[Lemmas 3.16 and 4.21]{immi}, for a fixed $q\in \pi(C)$ the collection $(S_i'\cap B_q)_{d\leq i\leq n}$ is a t-stratitication of $B_q$ and $C\cap B_q$ is a fibre of its rainbow. Notice that $(C\cap B_q)\cap S_d'=\emptyset$ (because $C\cap S_d'=\emptyset$), hence~\cite[Lemma 4.21]{immi} implies that $C\cap B_q$ is contained in a ball $D_q\subseteq B_q\setminus S_d'$. We further assume that such $D_q$ is maximal. Since $S_d'\cap B_q$ is finite and nonempty, $D_q$ is a fibre of the map $x\mapsto \rvo(x-S_d'\cap B_q)$. If we let $s_q$ be any element of $S_d'\cap B_q$, then $ \rvo(x-s_q)$ is constant on $D_q$; let $\xi_q\in \text{RV}^{n-d}$ be this constant value. Hence $D_q=s_q+(\{0\}^d\times  \rvo^{-1}(\xi_q))$. For any other $q'\in \pi(C)$, applying the risometry in (ii) (when applied to $d,\pi$ and $\lambda$) from $B_q$ to $B_{q'}$ provides us with $D_{q'}$ and $s_q'\in S_d'\cap B_{q'}$ with analogous properties. For the balls $D_{q'}$ the corresponding $\xi_{q'}$ equals $\xi_q$ by construction, so all of these balls have radius $\lambda ':=\vrv(\xi_q)$. Additionally, since the risometries in (ii) respect the rainbow of $(S_i')_{i\leq n}$, the collection $\{s_q\}_{q\in \pi(C)}$ is contained in a fibre $\tilde C\subseteq S_d'$ of the rainbow of $(S_i')_{i\leq n}$, and if $q,q'\in \pi(C)$, $\tsp_{B_q}((S_i'\cap B_q)_{d\leq i\leq n})=\tsp_{B_{q'}}((S_i'\cap B_{q'})_{d\leq i\leq n})$; we denote this common space by $V$ and let $\tau:R^{n-d}\longrightarrow R^{\dim(V)}$ be an exhibition of it. Since $S_d'\cap D_q=\emptyset$ (for any $q\in \pi(C)$), there is at least 1-translatability of $(S_i'\cap B_q)_{d\leq i\leq n}$ on $D_q$ (see~\cite[Section 3.1]{immi}), so $\dim(V)\geq 1$. We set $d':=d+\dim(V)$ and $\pi':R^n\longrightarrow R^{d'}$ as $\pi'(x)=\pi(x)\oplus \tau(x-\pi(x))$.

 We now show that $d', \lambda'$ and $\pi'$ as defined above satisfy the corresponding versions of conditions (i)-(iv). For notational simplicity we assume that $\pi'$ is the projection to the first $d'$ coordinates on $R^n$ (this is of course compatible with our previous assumption on $\pi$). For (i), if $q=(q_1,\dots ,q_{d},\dots, q_{d'})\in \pi'(C)$, let $B_q':=B(x,> \lambda')\subseteq \pi'^{-1}(q)$, where $x\in \pi'^{-1}(q)$ is any element such that  $\pi(x)\in C\cap B_{(q_1,\dots, q_d)}$. Then clearly, $C\cap \pi'^{-1}(q)\subseteq B_q'\subseteq \pi'^{-1}(q)$. This defines balls $B_q'$ ($q\in \pi'(C)$) for condition (i). For each $q\in \pi'(C)$, $B_q'$ is contained in $B_{\pi(q)}$, and therefore the risometry from $B_{\pi(q)}$ to $B_{\pi(q')}$ restricts to a risometry from $B_q'$ to $B_{q'}'$ for all $q,q'\in \pi(C)$; the risometry so defined fulfills condition (ii). Since $\pi$ is an exhibition of $\tsp_{B_{\pi(q)}}((S_i')_{i\leq n})\subseteq k^{d}$ and $\rho$ of $V\subseteq k^{n-d}$, the construction of $\pi'$ ensures that it is an exhibition of $\tsp_{B'}((S_i')_{i\leq n})\subseteq k^{d'}$, where $B'\subseteq R^n$ is the $n$-dimensional ball of radius $\lambda'$ containing $B_q'$. So far (i)-(ii) have been established; we prove (iv) in the next paragraph.
 
 Since $\afd(\tilde C)$ is exhibited by $\pi$, the properties of $(S_i')_{i\leq n}$ imply that the function $\tilde c:\pi(\tilde C)\longrightarrow R^{n-d}$ corresponding to $\tilde C$ is the restriction of an $L$-definable differentiable function. Notice that by the choice of $\tilde C$ it is clear that $\pi(\tilde C)=\pi(C)$; we use this fact without further comment below. Since $D_q=s_q+(\{0\}^d\times  \rvo^{-1}(\xi_q))$ and $s_q=(q,\tilde c(q))\in \tilde C$, $\pi'(D_q)=\{q\}\times (\tau(\tilde c(q))+\tau( \rvo^{-1}(\xi_q)))$. Let $U$ denote the product of open balls $B_1\times \dots\times B_d\times \tau( \rvo^{-1}(\xi_q))\subseteq R^{d'}$. We define $h':U\longrightarrow \pi'(C)$ by $h'(x,y)=(h(x), \tau\circ \tilde c\circ h(x)+y)$, for any $x\in B_1\times \dots \times B_d$ and $y\in \tau( \rvo ^{-1}(\xi_q))$. Clearly $h'$ is $L$-definable and differentiable, and, using that $h$ is so, it is easy to see that $h'$ is bijective. To finish the proof we need to check that $h'$ can be written as the composition of a risometry and a matrix in \gl{d'}. By~\cite[Lemma 4.3]{immi}, the map $(q,y)\mapsto (q,\tilde c(q)+y)$ from $\pi(C)\times R^{n-d}$ to itself equals the composition of a risometry and a matrix in \gl{n}. After composing with $\pi'$, we get that the map $(q,y)\mapsto (q,\tau(\tilde c(q))+y)$ from $\pi(C)\times \tau( \rvo^{-1}(\xi_q))$ to itself is the composition of a risometry and a matrix in \gl{d'}. Finally, adding that $h:B_1\times \dots\times B_d\longrightarrow \pi(C)$ is already a composition of a risometry and a matrix in \gl{d}, the required property of $h'$ follows.
\end{proof}
We finally prove the main result of the section---and of the paper. 

\begin{Theorem}\label{jpforthen}
  Any definable function in $(R, \rve)$ has the Jacobian property. This is, if $A\subseteq R\cup \rve$, $B_0\subseteq R^n$ and  $f:B_0\lto R$ is  $A$-definable in $(R,\rve)$, then there exists an $A$-definable map $\chi:B_0\lto S\subseteq \rve$  such that if $q\in S$ satisfies that $\dim(\chi^{-1}(q))=n$, then there exists $z\in R^n\setminus \{0\}$ such that 
\begin{equation}\label{eq:jpeq}
\val(f(x)-f(x')-\langle z,x-x'\rangle)>\val(z)+\val(x-x'),
\end{equation}
for all distinct $x,x'\in \chi^{-1}(q)$.
\end{Theorem}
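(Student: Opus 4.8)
The plan is to close the induction on $n$ begun just before Lemma~\ref{lemma5.8}. We are in the inductive step: assume $n\ge 1$ and that $\then$ has the Jacobian property upto $n-1$. Fix $A\subseteq R\cup\rve$, an $A$-definable set $B_0\subseteq R^n$ and an $A$-definable $f\colon B_0\to R$; we may assume $\dim B_0=n$, since otherwise no fibre of any map $\chi$ is $n$-dimensional and the statement is vacuous. The argument splits into an analytic normalisation, a geometric normalisation, and a direct computation.

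\emph{Preparation.} First I would normalise $f$ analytically: by the multi-variate version of Proposition~\ref{sharperwomono}, applied to $f$ and then, on each resulting piece, to the partial derivatives of the associated $L$-definable functions, I obtain an $A$-definable map $\rho\colon B_0\to\rve$ such that on each fibre of $\rho$ the function $f$ coincides with the restriction of an $L$-definable differentiable function whose Jacobian exists throughout and on which $\rvo\circ\jac$ is constant. Next, the geometric normalisation: by Theorem~\ref{assumptcon}, the inductive hypothesis and \cite[Theorem 4.12]{immi} there is an $A$-definable t-stratification $(S_i)_{i\le n}$ of $B_0$ reflecting $\rho$, and then Proposition~\ref{refinerainbow} yields an $A$-definable refinement $(S_i')_{i\le n}$ whose rainbow $\chi$ has the feature that every $n$-dimensional fibre $C$ of $\chi$ is the image $h(B_1\times\dots\times B_n)$ of a box of open balls under an $L$-definable differentiable bijection $h$ which is a composition of a risometry and a matrix in \gl{n}. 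Running these two refinements compatibly --- in the manner of the lemma preceding Proposition~\ref{refinerainbow} and of Remark~\ref{fullrefinement}, that is, by folding the data recorded by $\rho$ into what the t-stratification must reflect and invoking \cite[Theorem 4.20]{immi} --- I would also arrange that on each such $C$ the restriction $f|_C$ is still the restriction of an $L$-definable differentiable function $\widetilde f$ with $\rvo\circ\jac\widetilde f$ constant on $C$. This $\chi$ is the map the theorem requires: it is $A$-definable, and any $n$-dimensional fibre of $\chi$ lies in $S_n'$ because $\dim S_{\le n-1}'\le n-1$.

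\emph{Computation.} It remains to verify \eqref{eq:jpeq} on a fixed $n$-dimensional fibre $C=\chi^{-1}(q)=h(B_1\times\dots\times B_n)$. Write $h$ as the composition of a risometry and a matrix in \gl{n}, fix $x_0\in C$ and set $z:=\jac\widetilde f(x_0)$; one checks $z=0$ exactly when $f$ is constant on $C$, which we set aside as the degenerate case, so assume $z\neq0$. For distinct $x=h(u)$, $x'=h(u')$ in $C$, three observations do the work: (a) $\val(x-x')=\val(u-u')$, since both a matrix in \gl{n} and a risometry preserve the valuation of differences; (b) constancy of $\rvo\circ\jac\widetilde f$ on $C$ forces $\val((\jac\widetilde f(p)-z)_j)>\val(z_j)\ge\val(z)$ for all $p\in C$ and every coordinate $j$ with $z_j\neq0$, the $j$-th entry vanishing identically otherwise; and (c) a differentiable risometry has derivative everywhere congruent to the identity modulo $M_R$, so $Dh$ takes values in \gl{n} with constant residue, whence each $\partial_k h$ is $O_R^n$-valued and varies only within $M_R^n$ over the box. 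Telescoping along the coordinate polygonal path from $u'$ to $u$ inside the box and applying the Mean Value Theorem (available in $R\models\rcf$) to each scalar segment expresses $f(x)-f(x')$ as $\sum_k\langle\jac\widetilde f(p_k),\partial_k h(r_k)\rangle(u_k-u_k')$ with $p_k\in C$, and $\langle z,x-x'\rangle$ as $\sum_k\langle z,v_k\rangle(u_k-u_k')$ with each entry of $v_k$ a value of the corresponding $\partial_k h_j$ on the box. Subtracting and applying (b) and (c) componentwise, every coefficient of $u_k-u_k'$ acquires valuation strictly above $\val(z)$; since $\val(u_k-u_k')\ge\val(u-u')=\val(x-x')$ by (a), the ultrametric inequality yields $\val(f(x)-f(x')-\langle z,x-x'\rangle)>\val(z)+\val(x-x')$, as required.

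The genuinely substantive part --- producing a t-stratification whose top-dimensional rainbow fibres are simultaneously geometrically normalised (the bijection $h$ from a box) and analytically normalised ($f|_C$ an $L$-definable function with constant $\rvo\circ\jac$) --- is essentially already packaged in Proposition~\ref{refinerainbow} and the refinement lemma before it; the only real care needed is to run those two refinements in tandem so that both normalisations survive on a single rainbow. The concluding computation is then routine ultrametric bookkeeping, its one external input being fact (c) on differentiable risometries, which one checks directly from the definition of a risometry that it holds in the $T$-convex setting.
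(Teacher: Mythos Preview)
Your preparation matches the paper's: both reduce to an $n$-dimensional rainbow fibre $C=h(B_1\times\dots\times B_n)$ with $h=\varphi\circ M$ ($\varphi$ a risometry, $M\in\gl{n}$) on which $\rvo\circ\jac f$ is constant. The computations then diverge. The paper parametrises the straight segment from $h^{-1}(x')$ to $h^{-1}(x)$ inside the box, sets $\theta:=h\circ(\text{that line})$ and $g:=f\circ\theta\colon O_R\to R$, reads off directly from the risometry identity $\rvo(\varphi(a)-\varphi(b))=\rvo(a-b)$ that $\rvo(\jac\theta)$ is constant on $O_R$, and then applies the one-variable Lemma~\ref{lemma5.8} to $g$ (after scaling). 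Your route instead telescopes coordinate-wise through the box and applies the Mean Value Theorem to each scalar piece; this requires (i) your fact~(c), that a differentiable risometry has derivative in $I+M_R^{n\times n}$, and (ii) the coordinate-wise constancy of $\rvo\circ\jac\widetilde f$, which is slightly stronger than the vector-valued constancy the paper uses but is indeed arranged by your preparation. Both arguments are correct. The paper's is marginally cleaner because it never differentiates the risometry---it uses the defining inequality on differences rather than on derivatives---and compresses the whole estimate into one invocation of Lemma~\ref{lemma5.8}. Your approach is more hands-on and bypasses that lemma entirely, at the price of fact~(c); the latter does hold (the key point being that $M_R^n$ is closed in the order topology, so the limit defining $D\varphi$ stays inside), but it is worth recording that short justification rather than leaving it as ``one checks directly''.
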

\begin{proof} This proof is based on the second part of the proof of~\cite[Proposition 5.12]{immi}. Properties of analytic functions used in that proof are substituted here by properties of $\lrv$- and $L$-definable functions. 

 We perform induction on $n$. The base case $n=0$ is trivial. Suppose that $n\geq 1$ and that, for any $m\leq n-1$, any definable function from $R^{m}$ into $R$ has the Jacobian property. Let $A,B_0$ and $f$ be as in the hypotheses. By the typical argument, we assume that $f$ is $\lrv$-definable. Let $C_1,\dots,C_l$ be the $\lrv$-definable partition of $B_0$ and $f_1,\dots ,f_l$ the respective $L$-definable functions given by Proposition~\ref{sharperwomono}.  Let $\jac(f_j)$ be the Jacobian of $f_j$ (which is at least defined on $C_j$ because $f|_{C_j}=f_j|_{C_j}$).  In general, the local dimension of a set $X$ at a point $x\in X$ is defined as $\dim_x(X):=\min \{\dim(X\cap D)\ |\ x\in D\text{ is an open ball}\}$. In our context the following  general property of dimension holds for any set $X\subseteq R^m$: $\dim(\{x\in X\ | \ \dim_x(X)<\dim(X)\})<\dim(X)$ (see~\cite[Theorem 3.1]{forn}). Hence, for each $j\leq l$, $\{x\in C_j\ |\ \dim_x(C_j)<\dim(C_j)\}$ has dimension strictly less than $\dim(C_j)$, so we can assume that $C_j$ is of constant local dimension $\dim(C_j)$ (i.e. $\dim_x(C_j)=\dim(C_j)$ for each $x\in C_j$). From now on we assume that $\dim(C_j)=n$ (the only case we are interested in relation to the Jacobian property). After further refining the partition, we obtain an $A$-definable map $\rho:X\longrightarrow \rve$ for which each fibre is contained in a set $C_j$ and on any of its $n$-dimensional fibres, $ \rvo(\jac(f))$  ($=\rvo(\jac(f_j))$) is constant.

 By the inductive hypothesis and~\cite[Theorem 4.12]{immi}, there exists an $A$-definable t-stratification reflecting $\rho$. From such  stratification we obtain a t-stratification $(S_i')_{i\leq n}$ as in the statement of Proposition~\ref{refinerainbow}. Notice that the rainbow $\chi$ of $(S_i')_{i\leq n}$ refines the map $\rho$, so $ \rvo(\jac(f))$ is defined and constant on any $n$-dimensional fibre of $\chi$. Let $C$ be an  $n$-dimensional fibre of $\chi$ and $B_1,\dots, B_n$ and $h:B_1\times \dots \times B_n\longrightarrow C$ be as in  Proposition~\ref{refinerainbow}. Fix $\varphi$ a risometry and $M$ a matrix in \gln\ such that $h=\varphi \circ M$.

 Let $\xi$ be the (unique) value of $\rvo(\jac(f))$ on $C$.  If $\xi=0$,  $\jac(f|_C)(x)=0$ for all $x\in C$ and hence $f$ is constant on $C$, leaving nothing to prove. Consequently, from now on we assume that $\xi\neq 0$ and pick $z\in \rvo^{-1}(\xi)$.  Let $x,x'\in C$ be distinct. We use Lemma~\ref{lemma5.8} and properties of $h$ to show~\eqref{eq:jpeq}.

 We define the function $\eta:O_R\longrightarrow B_1\times \dots \times B_n$ by $\eta(t):=th^{-1}(x)+(1-t)h^{-1}(x')$. Then the composition $\theta:=h\circ \eta:O_R\longrightarrow C$ is a definable differentiable function. Notice that for $t\neq t'$ in $O_R$, 
 $
 M\circ \eta(t)-M\circ\eta(t')=(t-t')(h^{-1}(x)-h^{-1}(x')),\notag
 $
 and hence
 \begin{equation}
 \rvo\left(\frac{\theta(t)-\theta(t')}{t-t'}\right)= \rvo\left(\frac{M\circ \eta(t)- M\circ\eta(t')}{t-t'}\right)= \rvo(h^{-1}(x)-h^{-1}(x')), \notag
 \end{equation}
 meaning that $ \rvo\left((\theta(t)-\theta(t'))/(t-t')\right)$ is constant on $O_R^2\setminus \{(t,t)\ | \ t\in O_R\}$. This implies that $ \rvo(\jac(\theta))$ is constant on $O_R$, and it equals $\rvo\left((\theta(t)-\theta(t'))/(t-t')\right)$ for any distinct $t,t'\in O_R$. In particular, by taking $t=1$ and $t'=0$, we deduce that
 \begin{equation}\label{eq:1}
 \val(x-x'-\jac(\theta)(0))>\val(x-x').
 \end{equation}
 We set $g:=f\circ \theta:O_R\longrightarrow R$. Then $g$ is a definable diferentiable function and, by the chain rule, $g'(t)=\langle \jac(f|_C)(h\circ \eta(t)),\jac (\theta)(t)\rangle$ for all $t\in O_R$. It follows that for any $t\in O_R$, $\val(g'(t))\geq \val(\jac(f|_C))+\val(\jac(\theta)).$
 Similarly, for $t,t\in O_R$ with $t\neq t'$,
 \begin{align*}
 \val(g'(t)-g'(t')) \geq & \val(\jac(f|_C)(h\circ \eta(t))-\jac(f|_C)(h\circ \eta(t')))\\ &+\val(\jac(\theta)(t)-\jac(\theta)(t')) \\
  > &\val(\jac(f|_C))+\val(\jac(\theta)).
 \end{align*}
 Let $r\in R$ be such that $\val(r)=\val(\jac(f|_C))+\val(\jac(\theta))$ and notice that then the function $t\longmapsto g(t)/r$ satisfies all the conditions of Lemma~\ref{lemma5.8}. Thus, for all distinct $t,t'\in O_R$,
 $
 \val(g(t)-g(t')-g'(0)(t-t'))> \val(t-t')+\val(\jac(f|_C))+\val(\jac(\theta)).\notag
 $
 By taking again $t=1$ and $t'=0$ we obtain,
 \begin{equation}\label{eq:2}
  \val(f(x)-f(x')-g'(0))> \val(\jac(f|_C))+\val(\jac(\theta)).
 \end{equation}
  By the choice of $z$, $\val(z)=\val(\jac(f|_C))$ and by~\eqref{eq:1}, $\val(\jac(\theta))=\val(x-x')$. On the other hand, $g'(0)=\langle \jac(f|_C)(x'),\jac(\theta)(0)\rangle =\langle z,\jac(\theta)(0)\rangle$. Hence,~\eqref{eq:2} becomes
 \begin{align*}
 \val(f(x)-f(x')-\langle z,\jac(\theta)(0)\rangle)> \val(z)+\val(x-x').  
 \end{align*}
 Lastly, from~\eqref{eq:1} we have
 \begin{align*}
  \val(\langle z,\jac(\theta)(0)\rangle-\langle z,x-x'\rangle)> \val(z)+\val(x-x').
 \end{align*}
 Combining the last two inequalities proves~\eqref{eq:jpeq}.
\end{proof}

We are now able to conclude that t-stratifications exists for any definable map $B_0\lto \rve$. We write an abstract version of this for later reference. 

\begin{Corollary}\label{existence2}
 Let $\phi$ be an $\lhen$-formula defining a map $\chi_\phi(R):R^n\lto \rve$ in all models $(R,\rve)$ of $\then$. Then there are $\lhen$-formulas $\psi_0,\dots,\psi_n$ such that in each model $(R,\rve)$ of $\then$, the family of sets $(\psi_i(R))_{i\leq n}$ is a t-stratification reflecting $\chi_\phi(R)$. 
\end{Corollary}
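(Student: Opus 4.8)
The plan is to reduce to the non-uniform existence of t-stratifications, already at our disposal, and then to extract a single uniform choice of formulas by a compactness argument. First I would note that every model $(R,\rve)$ of $\then$ meets the hypotheses needed to invoke Halupczok's existence theorem: by Theorem~\ref{assumptcon} it satisfies the basic conditions that \cite{immi} requires, and by Theorem~\ref{jpforthen} it has the Jacobian property in every dimension. Hence \cite[Theorem 4.12]{immi} applies in every such model and yields a t-stratification of the definable map $\chi_\phi(R)$, whatever the domain dimension $n$; moreover --- exactly as used in the proof of Theorem~\ref{jpforthen} --- that theorem produces a t-stratification definable over the parameters of the input map, hence over the parameters of $\phi$. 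Naming those parameters by constants if need be, I assume $\phi$ is parameter-free, so that in every model of $\then$ there is a parameter-free tuple of $\lhen$-formulas defining a t-stratification reflecting $\chi_\phi(R)$.

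The key point is that, for a fixed tuple $\bar\psi=(\psi_0,\dots,\psi_n)$ of $\lhen$-formulas each in the $n$ field variables, the assertion that $(\psi_0(R),\dots,\psi_n(R))$ is a t-stratification reflecting $\chi_\phi(R)$ is captured by a single $\lhen$-sentence $\Sigma_{\bar\psi}$. Indeed, ``$\psi_0(R),\dots,\psi_n(R)$ partition $R^n$'' is plainly first-order; ``$\dim(\psi_i(R))\le i$'' is first-order because dimension of definable sets is uniformly definable in this setting (cf.\ \cite[Theorem 3.1]{forn}); and the reflection clause --- for each $d\le n$ and each ball $B$ (quantified over centre and radius) disjoint from $\psi_{\le d}(R)$, the maps $1_{\psi_0},\dots,1_{\psi_n}$ and $\chi_\phi$ are almost translation invariant in some $d$-dimensional direction on $B$ --- is first-order via the translatability/tangent-space apparatus of \cite{immi}, in which the space $\tsp_B$ attached to a definable family of objects is itself uniformly definable from $B$ and from the formulas defining the family. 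Checking this last point rigorously --- that ``almost translation invariance in a $d$-dimensional direction'' is a genuinely first-order condition on $B$, and not one witnessed merely by some non-definable risometry --- is the main thing to verify and the only real obstacle; this definability is, however, a built-in feature of \cite{immi} and is precisely what makes uniform statements such as the present one possible.

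Granting this, the corollary follows by compactness. By the first paragraph, $\then$ together with the set $\{\neg\Sigma_{\bar\psi}\}$, as $\bar\psi$ ranges over all tuples of $\lhen$-formulas in the $n$ field variables, is inconsistent; by compactness there are finitely many such tuples $\bar\psi^{(1)},\dots,\bar\psi^{(k)}$, say with $\bar\psi^{(l)}=(\psi_0^{(l)},\dots,\psi_n^{(l)})$, such that $\then\models\bigvee_{l=1}^{k}\Sigma_{\bar\psi^{(l)}}$. For each $i\le n$ I would then put
\[
\psi_i(x)\ :=\ \bigvee_{l=1}^{k}\Bigl(\psi_i^{(l)}(x)\ \wedge\ \Sigma_{\bar\psi^{(l)}}\ \wedge\ \bigwedge_{l'<l}\neg\Sigma_{\bar\psi^{(l')}}\Bigr),
\]
which is an $\lhen$-formula in the $n$ field variables $x$, since each $\Sigma_{\bar\psi^{(l)}}$ is a sentence. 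In any $(R,\rve)\models\then$, letting $l^\ast$ be the least index with $\Sigma_{\bar\psi^{(l^\ast)}}$ true there, one checks at once that $\psi_i(R)=\psi_i^{(l^\ast)}(R)$ for every $i\le n$; hence $(\psi_i(R))_{i\le n}$ is a t-stratification reflecting $\chi_\phi(R)$, as required.
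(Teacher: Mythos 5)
Your proof is correct in outline but takes a different, and considerably more laborious, route than the paper. The paper simply cites both \cite[Theorem 4.12]{immi} \emph{and} \cite[Corollary 4.13]{immi}: the latter already provides the uniform, formula-level existence of t-stratifications across all models of a theory satisfying Hypothesis 2.21, so no compactness argument or case-splitting is needed on our side. You instead invoke only Theorem 4.12 (model-by-model existence) and then reconstruct the uniformity yourself. This requires the additional claim that ``$(\psi_0(R),\dots,\psi_n(R))$ is a t-stratification reflecting $\chi_\phi(R)$'' is expressible by a single $\lhen$-sentence $\Sigma_{\bar\psi}$, which is the key point and, as you note, not obvious: the reflection clause is phrased via existence of risometries, which are arbitrary bijections, so one must know that translatability (equivalently, that $\tsp_B$ has dimension $\geq d$) is a definable condition in the ball $B$ and the defining formulas. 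This is indeed established in~\cite{immi}, but it is precisely the content packaged into Corollary 4.13 there; re-deriving it by hand and then running a compactness argument with the patching $\psi_i := \bigvee_l (\psi_i^{(l)} \wedge \Sigma_{\bar\psi^{(l)}} \wedge \bigwedge_{l'<l}\neg\Sigma_{\bar\psi^{(l')}})$ is sound but duplicates work Halupczok has already done. In short: your argument is valid modulo the definability claim you flag, but the cleaner move is to observe that \cite[Corollary 4.13]{immi} is already exactly the uniform statement and apply it directly, as the paper does.
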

\begin{proof}
 It has been proved that $\then$ satisfies~\cite[Hypothesis 2.21 (1)-(3) and (4'')]{immi} and that definable functions on its models have the Jacobian property. The result follows from~\cite[Theorem 4.12 and Corollary 4.13]{immi}.
\end{proof}

\subsection{No t-stratifications under arbitrary $T$}\label{notstrats}
The following example shows that the hypothesis of power boundedness of $T$ is essential for the existence of t-stratifications.

Let $T$ be a non-power bounded o-minimal theory expanding the $\lori$-theory of real closed fields, and $R$ one of its models. Let $g:R_{>0}^2\lto R$ be the function given by $g(x,y)=x^y$. From the dichotomy in o-minimal theories (see~\cite{miller}) an exponential map\footnote{i.e. a group isomorphism between the additive group $(R,+,0)$ and the multiplicative group $(R_{>0}, \cdot, 1)$.} is definable in $R$. It follows that $g$ is always $\lhen$-definable in any model $(R,\rve)$ of $\then$. 

Due to the growth-rate of exponential maps, for fixed $a,b>0$, no risometry can simultaneously \emph{straighten} both the sets $\{(a,y,g(a,y))\ |\ y\in R_{>0}\}$ and $\{(b,y,g(b,y))\ |\ y\in R_{>0}\}$. The following can be easily concluded. 
\begin{Proposition}
 The graph of $g$ above does not admit a t-stratification. 
\end{Proposition}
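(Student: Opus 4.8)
The plan is to argue by contradiction, exploiting the obstruction sketched in the paragraph preceding the statement: a risometry cannot simultaneously straighten two exponential curves with different bases. Suppose $(S_i)_{i\leq 3}$ were a t-stratification of $G:=\graph(g)\subseteq R^3$ (note $G$ is two-dimensional, so we are working with strata $S_0,S_1,S_2,S_3$ and $G$ itself would be the union $S_{\leq 3}$; since $\dim G=2$ we may as well take $S_3=\varnothing$). First I would locate a ``generic'' point of $G$ in the following sense: since $S_0\cup S_1$ has dimension at most $1$, there is a $2$-dimensional ball $B\subseteq R^3$ disjoint from $S_0\cup S_1$ with $B\cap G\neq\varnothing$, and after translating we may assume $B=O_R^3$ (or rescale so that the relevant part of $G$ lies in a unit ball in coordinates centered near some point $(a,b,g(a,b))$ with $a,b$ positive units). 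By the defining property of t-stratifications, on such a $B$ there is a risometry $\varphi:B\to B$ such that $1_{S_2}\circ\varphi$, hence $1_G\circ\varphi$ (as $G\cap B\subseteq S_2$ up to the lower strata), is translation invariant in the direction of a $2$-dimensional subspace $V\subseteq \overline R^3$, and moreover $\varphi$ respects the whole collection.

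The next step is to extract from this a contradiction using the exponential. The set $G$ is the graph of $g(x,y)=x^y=\exp(y\log x)$, so on $B$ we are looking at the zero set of $z-\exp(y\log x)$. Translation invariance of $1_G\circ\varphi$ in a $2$-space $V$ means $\varphi^{-1}(G\cap B)$ is (up to the negligible lower strata) a union of cosets of $V$, i.e. a ``flat'' set in the affine sense; concretely $\afd(G\cap B)$ would have to be all of $\overline R^3$ for a genuinely two-dimensional non-flat piece, which it is not — but more to the point, the risometry must move $G$ onto such a flat set while only perturbing distances within the same rv-class. I would then fix two distinct first coordinates $x=a$ and $x=b$ (both units, $a\neq b$ with $\res(a)\neq\res(b)$ so the slices are in different rv-classes), giving two curves $\gamma_a=\{(a,y,a^y)\}$ and $\gamma_b=\{(b,y,b^y)\}$ inside $G\cap B$. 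Their images under $\varphi$ must both lie in the flat set (a union of cosets of $V$); projecting along $V$, both $\gamma_a$ and $\gamma_b$ get straightened by $\varphi$ into, essentially, segments of the same linear family. But $\varphi$ is a single risometry: it preserves $\rvo$ of differences. Comparing the ``curvature'' of $\gamma_a$ and $\gamma_b$ — measured by the second-order term, i.e. the quantity $\val\big((a^{y}-a^{y'})-(\text{linear in }y-y')\big)$ versus the same for $b$ — one sees these second-order valuations differ by the $\val$ of $(\log a-\log b)$ times something, while a risometry applied to both curves would force the straightening corrections to agree up to higher order, contradicting the fact that $\log a\neq\log b$ produces genuinely different leading behaviour.

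More carefully, the clean way to run the final step is via the Jacobian-property language already available: restrict attention to $g(a,\cdot)$ and $g(b,\cdot)$ as one-variable functions of $y$, or rather to the map $y\mapsto (a^y,b^y)$, and observe that a t-stratification of $G$ would, by the standard reflection machinery (the rainbow map, Lemma~4.3 and the balls $B_q$ from the proof of Proposition~\ref{refinerainbow}-type arguments in \cite{immi}), force $g$ restricted to the relevant two-dimensional fibre to be affine-linear up to a risometry, i.e.\ to satisfy a Jacobian-property-style inequality with a single $z\in R^3\setminus\{0\}$ simultaneously for all pairs of points in that fibre. Feeding in the two slices $x=a$ and $x=b$ then forces $z$ to simultaneously approximate the gradient of $x^y$ at two $y$-values with two different bases, and the exponential growth makes this impossible: the error term $\val(a^{y}-a^{y'}-z_3(y-y'))$ cannot be made $>\val(z_3)+\val(y-y')$ for all $y,y'$ in a ball unless $z_3$ tracks the local slope, but a single $z_3$ cannot track the slope of both $a^y$ and $b^y$. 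I expect the main obstacle to be purely bookkeeping: making precise ``no risometry straightens both curves'' in the rv-valued setting, i.e.\ turning the intuitive statement about growth rates of $\exp$ into a clean statement about $\rvo$ of second differences that directly contradicts the translation-invariance clause. Once that lemma on exponential curves is stated and proved, the reduction from ``$G$ has a t-stratification'' to ``some risometry straightens two such curves'' is routine via the definition of reflection and the dimension count $\dim(S_{\leq 1})\leq 1$.
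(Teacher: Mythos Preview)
The paper does not actually supply a proof of this proposition: it states the key obstruction in the preceding paragraph (no single risometry straightens both curves $\gamma_a$ and $\gamma_b$) and then asserts that the proposition ``can be easily concluded.'' Your proposal follows precisely this hint and attempts to fill in the missing details, so in spirit the two approaches coincide; you are supplying considerably more than the paper does.

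There is, however, a genuine error in your setup. A t-stratification of $G\subseteq R^3$ is by definition a t-stratification of the characteristic map $1_G:R^3\to\{0,1\}$, hence a partition $S_0,\dots,S_3$ of the ambient $R^3$ (or of some $B_0\supseteq G$), \emph{not} of $G$ itself. Thus $S_{\leq 3}=R^3$, and $S_3$ is the generic open three-dimensional stratum, certainly not empty; the correct consequence of $\dim G=2$ is rather that $G\cap S_3=\varnothing$ (so $G\subseteq S_{\leq 2}$). This misreading also accounts for the phrase ``a $2$-dimensional ball $B\subseteq R^3$'': balls in $R^3$ are three-dimensional. What you want is a three-dimensional ball $B$ disjoint from $S_{\leq 1}$ and meeting $G$; on such $B$ the $2$-translatability clause forces a risometry to carry $G\cap B$ onto a finite union of affine $2$-planes intersected with $B$, and \emph{that} is the straightening you then aim to contradict.

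A subtler point, left open both by you and by the paper: to place two slices $\gamma_a,\gamma_b$ with $\res(a)\neq\res(b)$ inside a single such ball $B$, the first-coordinate projection of $B$ must have valuative radius strictly below $0$, i.e.\ $B$ must be large. One must then argue that a ball this large, disjoint from the one-dimensional $S_{\leq 1}$ and still meeting $G$, can be found---or else run the growth-rate argument with $a,b$ in the same residue class but letting $y$ range far enough that $\rvo(a^y\log a)$ varies, which is closer to the paper's ``growth-rate'' phrasing. Your final paragraph's detour through a Jacobian-type inequality gestures at this but does not settle it; the honest summary is that both you and the paper leave the core lemma (``no risometry straightens two exponential curves on a suitable ball'') asserted rather than proved.
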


Thus, the main result in the last section---that t-stratifications exist for definable sets in models of $\then$ whenever $T$ is power bounded---is the sharpest we could have expected. 
% 
% \begin{proof}
%  Assume otherwise. Let $(S_i)_{i\leq 2}t$ be a t-stratification of $Z$ and $\pi:R^3\lto R$ be the projection to the first coordinate. Fix an open valuative ball $B\subseteq R^3$ disjoint from $S_0$ and intersecting $Z$. Let $x_1,x_2\in \pi_1(B)$ and set  $D_i:=\{(x_i,y,g(x_i,y))\ |\ y\in R_{>0}\}$ for $i=1,2$. We claim that there is no risometry straightening both $D_1\cap B$ and $D_2\cap B$ simultaneously, contradicting that $Z$ is 1-translatable on $B$. Suppose that $\vhi:B\lto B$ is a risometry straightening both $D_1\cap B$ and $D_2\cap B$. This means that there is a 1-dimensional subspace $V$ of $\overline R^3$ such that for each $u,u'\in B$ with $u-u'\in V$, $\vhi(u)\in D_i\cap B \text{ if and only if }\vhi(u')\in D_i\cap B$ \todo{COMPLETE} 
%  \end{proof}

\section{Applications}
\subsection{Application 1: Stratifications induced on tangent cones}
If $X\subseteq R^n$ and $p\in R^n$, the \emph{tangent cone of $X$ at $p$} is the set 
\[
\mathcal C_p(X):=\{y\in R^n\ | \ \forall \gamma\in \Gamma \exists x\in X,r\in R_{>0}(\val(x-p)>\gamma \wedge \val(r(x-p)-y)>\gamma)\}.
\]
The tangent cone equals the tangent space of $X$ at $p$ whenever the latter exists (i.e. when $p$ is a regular point of $X$). 

Fix a set $X$ and a point $p$ as above. A t-stratification $(S_i)_{i\leq n}$ of $X$ induces a stratification of $\mathcal C_p(X)$ given by the sets $\mathcal C_{p,0}:=\mathcal C_p(S_0)$ and $\mathcal C_{p,i+1}:=\mathcal C_{p}(S_0\cup\dots\cup S_{i+1})\setminus \mathcal C_{p}(S_0\cup\dots\cup S_{i})$ for $0\leq i<n$. In~\cite[Section 4]{egr} the following result was foreseen.

 \begin{Theorem}\label{intro1}
  If $X$ is an $\lhen$-definable subset of $R^n$, $p$ is a point in $R^n$, and $(S_i)_{i\leq n}$ is a t-stratification of $X$, then the collection $(\mathcal C_{p,i})_{i\leq n}$ constitutes a t-stratification of  $\mathcal C_p(X)$.
 \end{Theorem}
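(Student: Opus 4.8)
The plan is to relate the tangent cone construction to the defining features of a t-stratification --- essentially, that balls far from the low-dimensional strata admit many independent directions of almost translation invariance --- and to show these features are inherited after passing to the tangent cone at $p$. First I would reduce to the case $p=0$ (translation is harmless for both the tangent cone and the t-stratification conditions) and, after rescaling, work with the local picture near $0$: for large $\gamma\in\Gamma$, the set $\mathcal C_0(X)$ is approximated, at scale $\gamma$, by the dilates $\{r^{-1}x : x\in X,\ \val(x)>\gamma\}$ of $X$. The key geometric fact I would isolate is that the affine direction $\afd$ of a small ball $B$ inside a fibre of the rainbow, together with the translatability space $\tsp_B$, are precisely the data that the tangent cone ``sees'': as $\gamma\to\infty$, the $\tsp$-spaces of balls near $0$ in $X$ at scale $\gamma$ stabilise to describe the linear structure of $\mathcal C_0(X)$, and likewise for each union $S_{\leq i}$.

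The main steps, in order, would be: (1) verify that $\mathcal C_{p,i}$ is $\lhen$-definable and that $\dim(\mathcal C_{p,i})\le i$ --- the dimension bound following because taking the tangent cone cannot raise dimension (a standard fact provable via the dimension theory guaranteed by the exchange principle and~\cite[Theorem 3.1]{forn}), combined with $\dim(S_{\le i})\le i$; (2) fix $d\le n$ and a ball $B'\subseteq R^n$ disjoint from $\mathcal C_{p,0}\cup\dots\cup\mathcal C_{p,d}=\mathcal C_p(S_{\le d})$, and show that $B'$ must correspond, at some large scale $\gamma$, to a ball $B$ inside $X$ (or rather inside a dilate of $X$) that is disjoint from $S_{\le d}$ --- here one uses the very definition of the tangent cone, that points of $\mathcal C_p(S_{\le d})$ are exactly the directions approached by $S_{\le d}$; (3) apply the t-stratification property of $(S_i)_{i\le n}$ to that ball $B$ to obtain a $d$-dimensional space $V$ and a risometry exhibiting almost translation invariance of the $1_{S_j}$ on $B$ in the directions of $V$; (4) transport this back through the dilation and the limit $\gamma\to\infty$ to produce a $d$-dimensional space of almost translation invariance for the characteristic functions $1_{\mathcal C_{p,j}}$ on $B'$. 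Since the $\mathcal C_{p,j}$ are built from the $\mathcal C_p(S_{\le i})$ by boolean operations, and dilation commutes with these operations up to the relevant error term, the invariance for the $S_{\le i}$ at scale $\gamma$ yields invariance for the $\mathcal C_{p,i}$ on $B'$.

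The hard part will be step (4): making precise the passage ``$\gamma\to\infty$'' for the risometries. A risometry witnessing $d$-translatability of $(S_i\cap B)$ depends on $\gamma$ (and on the chosen representing ball $B$ at that scale), and one must extract from this $\gamma$-indexed family a single risometry on $B'$ witnessing translatability of the $\mathcal C_{p,i}$. I would handle this by a compactness/saturation argument: the statement ``$B'$ is $d$-translatable for $(\mathcal C_{p,i})_{i\le n}$'' is, after unwinding the definitions in~\cite[Section 3]{immi}, a definable condition on $B'$, and its failure would, by the correspondence in steps (2)--(3), contradict the translatability of some approximating ball $B$ in $X$ for $(S_i)_{i\le n}$ --- which holds by hypothesis for \emph{every} ball disjoint from $S_{\le d}$. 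One also has to check the direction space obtained is genuinely $d$-dimensional and not smaller; this is where disjointness of $B'$ from $\mathcal C_p(S_{\le d})$ (rather than just from $\mathcal C_p(S_{\le d-1})$) is used, exactly as in the non-archimedean setting of~\cite{immi}. A secondary technical point, worth stating as a lemma, is that $\mathcal C_p$ commutes with the boolean operations defining the $\mathcal C_{p,i}$ up to sets of strictly smaller dimension, so that reflecting the filtration $(S_{\le i})_i$ suffices to reflect $(\mathcal C_{p,i})_i$.
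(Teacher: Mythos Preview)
The paper does not give a self-contained proof of this theorem: it simply cites \cite[Theorem 4.1]{egr}, where the result was established \emph{conditionally} on the existence of t-stratifications for definable sets in models of $\then$, and observes that this condition has now been verified (Corollary~\ref{existence2}). So the paper's ``proof'' is a one-line deferral to prior work of the author.

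Your proposal, by contrast, sketches a direct argument from scratch. The broad shape --- reduce to $p=0$, relate balls in $\mathcal C_0(X)$ to dilates of small balls in $X$ near $0$, transfer $d$-translatability through the dilation, and handle the limiting process by compactness --- is plausible and is presumably close in spirit to what \cite{egr} actually does. Two remarks on your sketch. First, your ``secondary technical point'' about $\mathcal C_p$ commuting with boolean operations is a non-issue as stated: the strata $\mathcal C_{p,i}$ are \emph{defined} as successive differences $\mathcal C_p(S_{\le i})\setminus\mathcal C_p(S_{\le i-1})$, so there is nothing to check there; the real point is that reflecting the filtration $(\mathcal C_p(S_{\le i}))_i$ is equivalent to reflecting $(\mathcal C_{p,i})_i$. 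Second, and more substantively, step~(4) is where the work lies, and your compactness sketch is too loose: $d$-translatability is an assertion about the existence of a risometry, which is not obviously a first-order condition on $B'$, so you cannot directly invoke ``failure would be definable and lead to a contradiction''. You would need to route through the characterisation of translatability via $\tsp_B$ (as in \cite[Section~3]{immi}) and argue that the $\tsp$-spaces of approximating balls stabilise --- which is closer to what you say in your opening paragraph than to what you say in step~(4). Without access to \cite{egr} one cannot say whether your outline matches that paper's argument, but the gap just named is the one you would need to close to make the sketch into a proof.
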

 \begin{proof}
 	See the argument for~\cite[Theorem 4.1]{egr}. There, the result is conditional on the existence of t-stratifications for definable sets in models of $\then$ (reminder: we have not dropped the assumption of power boundedness of $T$), but this has been proved in this work. Thus, the theorem holds without further assumptions.  
 \end{proof}
 
\subsection{$L$-definable t-stratifications}

Corollary~\ref{existence2} states the existence of t-stratifications definable in the language $\lhen$. Here  we will prove that for any given t-stratification $(S_i)_{i\leq n}$ (say, of $B_0$) we can always find  another t-stratification $(S_i')_{i\leq n}$ reflecting any map reflected by $(S_i)_{i\leq n}$ and for which each set $S_{i}'$ is $L$-definable. When $L=\lori$ this result is in~\cite[Section 6]{immi}. 

The next proposition---a version of~\cite[Proposition 6.2]{immi}---uses the following notion of dimension of formulas. 

\begin{Definition}
 Let $x$ be a tuple of field sort variables and $\phi(x)$ an $\lhen$ formula. The \emph{dimension of $\phi(x)$} is defined as 
  \begin{equation}
    \dim(\phi(x)):=\max\{\dim(\phi(R))\ |\ (R,\rve) \text{ is a model of }\then\}.\notag
  \end{equation}
\end{Definition}

\begin{Proposition}\label{deltaset}
Let $\Delta$ be a family of $\lhen$-formulas with the following properties:
  \begin{enumerate}
    \item[\textup{(i)}] $\Delta$ is closed under disjunctions and contains $\bot$;
    \item[\textup{(ii)}] for each $\lhen$-formula $\phi$ there is a formula $\phi^\Delta\in \Delta$ such that $\phi \rightarrow \phi^\Delta$ and $\dim(\phi) =\dim (\phi^\Delta)$. 
  \end{enumerate}
Also assume that $(\phi _i)_{i\leq n}$ is a tuple of $\lhen$-formulas defining a t-stratification in all models of $\then$. Then there exists a tuple of $\lhen$-formulas $(\psi_i)_{i\leq n}$ that defines a t-stratification reflecting $(\phi_i)_{i\leq n}$ in all models of $\then$ and such that, for each $i\leq n$, $\psi_{\leq i}:=\psi_0\vee \dots \vee \psi_i$ is equivalent to a formula in $\Delta$.  
\end{Proposition}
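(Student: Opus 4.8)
The plan is to build $(\psi_i)_{i\le n}$ by a downward induction on $d$, producing at each stage a t-stratification whose top skeleta $\psi_{\le d},\dots,\psi_{\le n}$ are (equivalent to) formulas in $\Delta$, while re-stratifying the levels below $d$ so as not to spoil being a t-stratification. Everything is done at the level of formulas, uniformly across models of $\then$ (as in Corollary~\ref{existence2}), so the verifications below are understood to hold in every model. I will treat the case where the underlying set is $R^n$ --- the one needed for our applications --- in which $\top\in\Delta$, since hypothesis (ii) applied to $\top$ forces $\top^\Delta\equiv\top$.

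For $d=n$ take $(\psi^{(n)}_i):=(\phi_i)$, which reflects itself and has $\psi^{(n)}_{\le n}\equiv\top\in\Delta$. For the step $d+1\rightsquigarrow d$, assume $(\psi^{(d+1)}_i)$ is a t-stratification reflecting $(\phi_i)$ with $\psi^{(d+1)}_{\le i}$ equivalent to a $\Delta$-formula for $i\ge d+1$. Put $\beta:=\psi^{(d+1)}_{\le d}$ (so $\dim\beta\le d$), and use (ii) to fix $\beta^\Delta\in\Delta$ with $\beta\rightarrow\beta^\Delta$ and $\dim\beta^\Delta=\dim\beta\le d$. Define
\[
\mu_{\le j}:=\psi^{(d+1)}_{\le j}\vee\beta^\Delta\qquad(d\le j\le n),
\]
so that $\mu_{\le d}\equiv\beta^\Delta$, the family $(\mu_{\le j})_{d\le j\le n}$ is increasing with $\mu_{\le n}\equiv\top$, $\dim\mu_{\le j}\le\max(j,d)=j$, and each $\mu_{\le j}$ lies in $\Delta$ (by (i), being a disjunction of $\beta^\Delta$ with $\psi^{(d+1)}_{\le j}\in\Delta$ when $j\ge d+1$). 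By Theorem~\ref{assumptcon}, the Jacobian property established above, and \cite[Theorems 4.12 and 4.20]{immi} (used as in the proof of Proposition~\ref{refinerainbow} and the remark following the lemma on property $(*)_d$), there is a t-stratification $(N_i)_{i\le n}$ that \emph{refines} $(\psi^{(d+1)}_i)$ and \emph{reflects} $(\phi_i)$ together with the family $(\mu_j)_{d\le j\le n}$. Since $(N_i)$ refines $(\psi^{(d+1)}_i)$, we get $N_{\le d-1}\subseteq\psi^{(d+1)}_{\le d-1}\subseteq\beta\subseteq\beta^\Delta=\mu_{\le d}$. Now set $\psi^{(d)}_{\le j}:=N_{\le j}$ for $j<d$ and $\psi^{(d)}_{\le j}:=\mu_{\le j}$ for $j\ge d$.

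It then remains to check that $(\psi^{(d)}_i)$ is a t-stratification reflecting $(\phi_i)$ with $\psi^{(d)}_{\le i}\in\Delta$ for $i\ge d$. Nesting holds because $N_{\le d-1}\subseteq\mu_{\le d}$; the dimension bounds and $\psi^{(d)}_{\le n}\equiv\top$ are clear; and $\psi^{(d)}_{\le j}=\mu_{\le j}\in\Delta$ for $j\ge d$. The t-stratification and reflection conditions I would check ball by ball: on a ball $B$ disjoint from $\psi^{(d)}_{\le e}$ with $e\ge d$ one verifies, using $\beta^\Delta=\mu_{\le d}\subseteq\mu_{\le e}$ and the refinement $N_{\le j}\subseteq\psi^{(d+1)}_{\le j}$, that $(\psi^{(d)}_i)$ agrees with $(\psi^{(d+1)}_i)$ on $B$, so the needed almost translation invariance follows from $(\psi^{(d+1)}_i)$ being a t-stratification reflecting $(\phi_i)$; on a ball $B$ disjoint from $\psi^{(d)}_{\le e}=N_{\le e}$ with $e<d$, the t-stratification $(N_i)$ --- which reflects $(\phi_i)$ and all $\mu_j$ ($j\ge d$) --- supplies a single $e$-dimensional subspace in whose direction the $1_{N_j}$'s, the $1_{\mu_j}$'s, the $1_{\phi_j}$'s and the data reflected by $(\phi_i)$ are simultaneously (almost) translation invariant on $B$, and each $1_{\psi^{(d)}_{\le k}}$ is a Boolean combination of these. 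Taking $d=0$ yields the desired $(\psi_i):=(\psi^{(0)}_i)$.

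The main obstacle is that a naive $\Delta$-enlargement of the strata need not be a t-stratification: enlarging level $d$ from $\beta$ to $\beta^\Delta$ can destroy translation invariance on balls that meet $\beta^\Delta$ but miss the old lower skeleton. The construction absorbs this by re-stratifying below level $d$ with a \emph{refining} reflecting t-stratification $(N_i)$; the refinement is exactly what forces $N_{\le d-1}\subseteq\beta^\Delta$, so that gluing $(N_i)$ below level $d$ onto the $\Delta$-enlarged skeleton above level $d$ introduces no uncontrolled (non-$\Delta$) pieces into the top. Keeping nesting, the dimension bounds, and membership in $\Delta$ for the levels $\ge d$ all true through the induction is where the care is needed.
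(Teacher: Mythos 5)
There is a genuine gap in your induction step, precisely at the point you yourself flag as delicate. You need $N_{\le d-1}\subseteq\mu_{\le d}=\beta^\Delta$ for the skeleta $\psi^{(d)}_{\le j}$ ($j\geq d$) to be in $\Delta$ and for nesting to hold, and you try to get this from the claim that $(N_i)$ ``refines'' $(\psi^{(d+1)}_i)$. But the refinement produced by \cite[Theorem 4.20]{immi} (as in the lemma preceding Remark~\ref{fullrefinement} and Proposition~\ref{refinerainbow}) runs the \emph{opposite} direction at low levels: one gets $N_j\subseteq\psi^{(d+1)}_j$ only for $j\ge d$, and since both families partition the ambient set this forces $N_{\le d-1}\supseteq\psi^{(d+1)}_{\le d-1}$, not $\subseteq$. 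Worse, the containment you want cannot be arranged in general: $(N_i)$ has to reflect $\mu_{\le d}=\beta^\Delta$, and the $(d-1)$-skeleton of any such $(N_i)$ must absorb the ``singular'' locus of $\beta^\Delta$, which may lie outside $\beta$ (hence outside $\psi^{(d+1)}_{\le d-1}$). So you cannot, in a single step, produce a t-stratification whose $d$-skeleton is exactly $\beta^\Delta$ and whose lower skeleta sit inside the old ones.

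The paper avoids this by \emph{not} attempting exact $\Delta$-skeleta in one pass. It instead carries along an error set of controlled dimension $d$ and proves a claim that produces a new t-stratification $(\phi'_i)$ together with companion formulas $\phi^*_{\le i}\in\Delta$ such that $\phi'_{\le i}\to\phi^*_{\le i}$ and $\dim(\phi^*_{\le i}\wedge\neg\phi'_{\le i})\le d-1$; iterating drives the error dimension to $-1$. The technical heart of that claim is a recursive definition of auxiliary formulas $\delta_i$ designed so that $\phi_{\le i}\vee\delta$ ($\delta=\bigvee_i\delta_i$) is literally a disjunction of $\Delta$-formulas, after which a fresh t-stratification reflecting $((\phi_i),\delta)$ is merged via \cite[Lemma 4.20]{immi}, with the leftover low-dimensional skeleton $\psi_{\le d-1}$ simply replaced by $(\psi_{\le d-1})^\Delta$ in $\phi^*$ and charged to the error term. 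If you try to patch your own construction by setting $\psi^{(d)}_{\le j}:=\mu_{\le j}\cup N_{\le d-1}$ for $j\ge d$ (which does restore nesting), you lose $\Delta$-membership and are forced into exactly this kind of controlled-error bookkeeping --- which is to say, the paper's argument.
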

\begin{proof} 

Let $(\phi_i)_{i\leq n}$ be a tuple of formulas as in the hypotheses and pick formulas $\phi_i^\Delta$ as in (ii). Set, for each $i\leq n$, $\phi_{\leq i}:=\phi_0\vee \dots \vee \phi_i$ and similarly consider $(\phi_{\leq i})^\Delta\in \Delta$. Fix $d\leq n$ and suppose that for all $i\leq n$, $\dim((\phi_{\leq i})^\Delta\wedge \neg (\phi_{\leq i}))\leq d$. A repeated application of the following claim establishes the proposition.\newline
{\bf Claim.} There exist a t-stratification $(\phi'_i)_{i\leq n}$ reflecting $(\phi_i)_{i\leq n}$ and formulas $\phi_{0}^*,\dots, \phi_{n}^*$ such that, for each $i\leq n$, $\phi _{\leq i}^*\in \Delta, \phi'_{\leq i}\rightarrow \phi _{\leq i}^*$ and $\dim(\phi^*_{\leq i}\wedge \neg (\phi '_{\leq i}))\leq d-1$.\newline
\textit{Proof of Claim.} Set $\delta_n:=(\phi_{\leq n})^\Delta \wedge \neg (\phi_{\leq n})$ and pick $\delta_n^{\Delta}\in \Delta$ as in (ii). Fix $i<n$ and suppose that $\delta_{i+1}$ is defined already and that we have made a choice of $\delta_{i+1}^\Delta$ too. We then set \[\delta_i:=((\phi_{\leq i})^\Delta\wedge \neg (\phi_{\leq i}))\vee (\delta _{i+1}^\Delta\wedge \delta _{i+1})\vee \dots \vee (\delta _{n}^\Delta\wedge \neg \delta _{n}).\] 
This process defines the formulas $\delta _i$ recursively, along with the formulas $\delta_i^\Delta$, $i\leq n$. Notice that $\dim(\delta_i)\leq d$ for all $i\leq n$. If $\delta :=\bigvee _{i\leq n}\delta_i$, then for any $i\leq n$, the formula $\phi _{\leq i}\vee \delta $ is equivalent to a disjunction of formulas in $\Delta$ as indicated below. We use $\equiv$ to denote logical equivalence.
\begin{flalign*}
  \phi _{\leq i}\vee \delta \equiv&  \bigvee _{j\leq i}\left(\phi_{\leq j}\vee \delta _j\vee\dots \vee \delta _n\right) & \\
  =&\bigvee _{j\leq i}[\phi_{\leq j}\vee ((\phi_{\leq j})^\Delta\wedge \neg (\phi_{\leq j}))\vee (\delta _{j+1}^\Delta\wedge \neg\delta _{j+1})\vee \dots \vee (\delta _{n}^\Delta\wedge \neg \delta _{n})& \\ &\vee \delta _{j+1}\vee\dots \vee \delta _n ]& \\
  =&\bigvee _{j\leq i}[(\phi_{\leq j}\vee ((\phi_{\leq j})^\Delta\wedge \neg (\phi_{\leq j})))\vee ((\delta _{j+1}^\Delta\wedge \neg\delta _{j+1})\vee \delta _{j+1})\vee \dots& \\& \vee ((\delta _{n}^\Delta\wedge \neg \delta _{n})\vee \delta _n)] & \\
  \equiv& \bigvee _{j\leq i}((\phi_{\leq j})^\Delta\vee \delta_{j+1}^\Delta\vee \dots \vee \delta_n^\Delta).
\end{flalign*}
Now let $(\psi_i)_{i\leq n}$ be a t-stratification reflecting $((\phi_i)_{i\leq n},\delta)$. Implicitly substituting $\phi _{\leq i}\vee \delta$ by $\bigvee _{j\leq i}((\phi_{\leq j})^\Delta\vee \delta_{j+1}^\Delta\vee \dots \vee \delta_n^\Delta)$,~\cite[Lemma 4.20]{immi} tells us that setting 
\begin{equation}
  \phi'_{\leq i}=
  \begin{cases}
    \psi _{\leq i}& \mbox{ if } i\leq d-1,\\
    \bigvee \limits_{j\leq i}((\phi_{\leq j})^\Delta\vee \delta_{j+1}^\Delta \dots \vee \delta_n^\Delta)\vee \psi_{\leq d-1}& \mbox{ if } i\geq d,
  \end{cases} \notag
\end{equation}
defines a t-stratification $(\phi_i')_{i\leq n}$ reflecting $(\phi_i)_{i\leq n}$. 

For the definition of the formulas $\phi_i^*$ we first fix a formula $(\psi _{\leq i})^\Delta \in \Delta$ for each $\psi_{\leq i}$ as in (ii). Define
\[
\phi_{\leq i}^*=
\begin{cases}
(\psi _{ \leq i})^\Delta& \mbox{ if } i\leq d-1,\\
\bigvee \limits_{j\leq i}((\phi_{\leq j})^\Delta\vee \delta_{j+1}^\Delta\vee \dots \vee \delta_n^\Delta)\vee (\psi _{\leq d-1})^\Delta & \mbox{ if } i\geq d.
\end{cases}
\]
Clearly $\phi _{\leq i}^*\in \Delta$ and $\phi _{\leq i}'\rightarrow \phi _{\leq i}^*$ for all $i\leq n$. Finally, we only need to check that for each $i\leq n$, $\dim(\phi^*_{\leq i}\wedge \neg (\phi '_{\leq i}))\leq d-1$. To prove this we show that the formula
\begin{equation}\label{eq:impl}
[\phi^*_{\leq i}\wedge \neg (\phi '_{\leq i})]\rightarrow [(\psi _{\leq d-1})^\Delta \wedge \neg(\psi _{\leq d-1})] 
\end{equation}
 holds for any $i\leq n$. In such case $(\psi_i)_{i\leq n}$ being a t-stratification and the properties of the formula $(\psi_{d-1})^\Delta$ imply the required inequality. By the definition of $\phi_{\leq i}^*$,~\eqref{eq:impl} is trivial for $i\leq d-1$, so assume that $i\geq d$. Hence,
\begin{flalign*}
 \phi^*_{\leq i}\wedge \neg (\phi '_{\leq i}) =& \left(\bigvee _{j\leq i} (\phi_{\leq j}^\Delta\vee \delta_j^\Delta\vee \dots \vee \delta _n^\Delta)\vee \psi_{\leq d-1}^\Delta\right) & \\
 &\wedge \neg \left(\bigvee _{j\leq i} (\phi_{\leq j}^\Delta\vee \delta_j^\Delta\vee \dots \vee \delta _n^\Delta)\vee \psi_{\leq d-1}\right) & \\
\longrightarrow & \ \psi _{\leq d-1}^\Delta\wedge \neg (\psi_{\leq d-1}).
\end{flalign*}
This ends the proof of the claim.
\end{proof}

We now aim to apply this proposition to the set $\Delta$ of all $L$-formulas. Clearly such set $\Delta$ satisfies Proposition~\ref{deltaset} (i), while (ii) is deduced after showing that any $\lhen$-definable set is contain in an $L$-definable set of the same dimension. For this we digress slightly to talk about cell decomposition. We show that what we require holds for cells, the conclusion then clearly holds for any $\lhen$-definable set. As usual, we let $(R,\rve)$ be a model of $\then$. We also work with the associated $\lrv$-structure $(R,\rvu)$. 

We define two kinds of cells, the first coming from the o-minimal setting of $R$ as an $L$-structure, and the second coming from the weakly o-minimality (on the first sort) of $(R,\rvu)$. 
\begin{Definition}[o-minimal cells] An $L$-1-cell is simply an interval of $R$ (allowing $\pm \infty$ as endpoints). An $L$-$(n+1)$-cell is a subset $C$ of $R^{n+1}$ for which there is an $L$-$n$-cell $C'\subseteq R^n$ such that either
    \begin{enumerate}[(i)] 
      \item $C=\{(a,f(a))\in R ^{n+1}\ |\ a\in C'\}$, where $f:R^n\longrightarrow R$ is an $L$-definable function, or
      \item $C=\{(a,b)\in R ^{n+1}\ |\ a\in C'\mbox{ and } f(a)<b<g(a)\}$, where $f,g:R^n\longrightarrow R$ are $L$-definable functions satisfying $f(x)<g(x)$ for all $x\in C'$.
    \end{enumerate}
We say that $X$ is a cell if it is an $n$-cell for some $n\geq 1$.
\end{Definition}
The idea for the second kind of cells was first developed in~\cite[Subsection 4.2]{dugald}, where a Cell Decomposition Theorem was proved for weakly o-minimal theories. Below $\widehat{R}$ denotes the Dedekind completion of $R$ as an ordered field.
 
\begin{Definition} An $\lrv$-1-cell is a convex $\lrv$-definable subset of $R$. An $\lrv$-$(n+1)$-cell is a subset $D$ of $R^{n+1}$ for which there is an $\lrv$-$n$-cell $D'\subseteq R^n$ such that either
    \begin{enumerate}[(i)] 
      \item $D=\{(a,f(a))\in R ^{n+1}\ |\ a\in D'\}$, where $f:R^n\longrightarrow R$ is an $\lrv$-definable function, or
      \item $D=\{(a,b)\in R ^{n+1}\ |\ a\in D'\mbox{ and } f(a)<b<g(a)\}$, where $f,g:R^n\longrightarrow \widehat{R} $ are definable functions satisfying $f(x)<g(x)$ for all $x\in D'$.
    \end{enumerate}
Similarly, we say that $X$ is an $\lrv$-cell if it is an $\lrv$-$n$-cell for some $n\geq 1$.
\end{Definition}

Notice that we do not expect the boundary functions to be continuous for any of the two kinds of cells defined above. In the case of $\lrv$-cells this omission is needed for the following theorem.

\begin{Theorem}
Let $X\subseteq R ^n$ be an $\lrv$-definable subset. Then it admits a partition into finitely many $\lrv$-cells. 
\end{Theorem}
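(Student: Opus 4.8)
The plan is to prove this by induction on $n$, following the standard pattern of cell decomposition theorems in (weakly) o-minimal contexts, but adapted to the two-sorted language $\lrv$ and exploiting the quantifier elimination of Theorem~\ref{qe}. For $n=1$, an $\lrv$-definable subset of $R$ is, by the discussion after Theorem~\ref{qe}, a finite union of singletons, open intervals and $\val$-discs; each of these is convex, so it is automatically a finite union of $\lrv$-$1$-cells. This establishes the base case.

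For the inductive step I would fix $X\subseteq R^{n+1}$ and consider the projection $\pi:R^{n+1}\lto R^n$ onto the first $n$ coordinates. The key tool is the uniform fibrewise description of $\lrv$-definable subsets of $R$: by Theorem~\ref{qe} and Convention~\ref{rvterms}, the fibre $X_a=\{b\in R\ |\ (a,b)\in X\}$ is, for each $a\in R^n$, a boolean combination of conditions of the form $\rvo(b-t_i(a))\ \square\ \rvo(t_j(a))$ and $b\ \square\ t_k(a)$ with the $t$'s $L$-terms (or more precisely $\lrv$-definable functions of $a$, after absorbing parameters). The endpoints of the intervals and the ``centres'' and ``radii'' of the $\val$-discs appearing in $X_a$ are then given by finitely many $\lrv$-definable functions $a\mapsto f_\ell(a)$ with values in $R$ (for centres/endpoints) or in $\rvu$ (for radii); one should also allow boundary values in $\widehat R$ to handle cuts that are not realised in $R$, which is exactly why the $\lrv$-cell definition permits $\widehat R$-valued boundary functions. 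By partitioning $R^n$ according to the (finitely many, by compactness) possible ``shapes'' of $X_a$ — which intervals/discs occur, in what order, and which weak/strict inequalities hold — and applying the induction hypothesis to cut each piece of $R^n$ into $\lrv$-$n$-cells, we reduce to the case where $X$ lies above a single $\lrv$-$n$-cell $D'$ and has constant fibre-shape. Over such $D'$, each piece of $X_a$ is cut out between two consecutive $\lrv$-definable (possibly $\widehat R$-valued) boundary functions or is a graph of one such function, so $X$ decomposes into $\lrv$-$(n+1)$-cells of types (i) and (ii). Refining $D'$ further so that each boundary function is defined and (if one wishes) well-behaved on all of $D'$ finishes the step.

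The main obstacle, and the point requiring the most care, is controlling the boundary functions: after quantifier elimination the fibre $X_a$ is described by $\rvo$-conditions, and turning ``$b$ lies in the $\val$-disc $\{b:\rvo(b-c(a))=\xi(a)\}$'' into genuine lower and upper boundary functions $f(a)<b<g(a)$ requires choosing, $\lrv$-definably and uniformly in $a$, elements that approximate the two cuts made by the disc. This is precisely the content of Proposition~\ref{defpointclosedball} and the analysis in the proof of Proposition~\ref{tstrat-1}: every $\lrv$-definable closed ball contains an $\lrv$-definable point, so the cuts bounding a $\val$-disc can be named by $\widehat R$-valued (sups/infs of) $\lrv$-definable functions. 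I would invoke that machinery rather than redo it. A secondary subtlety is ensuring the partition of $R^n$ into ``fibre-shapes'' is itself $\lrv$-definable and finite; this follows from quantifier elimination (the shape of $X_a$ is controlled by a quantifier-free condition on the finitely many values $\rvo(t_i(a)-t_j(a))$, $t_k(a)$) together with a compactness argument bounding the number of shapes, exactly as in the classical o-minimal cell decomposition proof. Once these definability-and-uniformity points are in place, the combinatorics of assembling the cells is routine.
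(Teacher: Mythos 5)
Your proposal is correct but takes a genuinely different route from the paper. The paper's proof is a one-line citation: weak o-minimality of the field sort of $\trv$ is already recorded in the discussion after Theorem~\ref{qe}, and the result is then immediate from the Macpherson--Marker--Steinhorn cell decomposition theorem for weakly o-minimal theories, \cite[Theorem 4.6]{dugald}. You instead re-derive that theorem by hand for this specific setting, by induction on $n$ via the quantifier elimination of Theorem~\ref{qe}; this follows the same template as the cited proof but makes explicit exactly which features of $\trv$ are used. The one misstep is inessential but worth noting: the appeal to Proposition~\ref{defpointclosedball} (definable points in closed balls) to ``name'' the two cuts bounding a $\val$-disc is not needed. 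The definition of type (ii) $\lrv$-cells was set up precisely so that boundary functions are allowed to be $\widehat R$-valued, and the inf and sup of each fibre-piece are automatically $\lrv$-definable cuts, uniformly in the base parameter; no approximation by genuine field elements is required. Proposition~\ref{defpointclosedball} is used in the proof of Proposition~\ref{tstrat-1} exactly because that argument must feed an actual element of $R$ into $\rvo$, which is a different constraint. In short, the paper's approach buys brevity by invoking an external theorem; yours buys self-containment and transparency at the cost of redoing the induction.
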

\begin{proof}
 It follows from weakly o-minimality by \cite[Theorem 4.6]{dugald}. %For $X$ $\lhen$-definable, we employ the typical manouvre: if $a$ is a tuple of parameters from $R\cup \rve$ such that $X$ is $\lhen$-$a$-definable, we let $b$ be a tuple of elements in $\rvu$ such that $X$ is $((a\cap R)\cup \{b\})$-$\lrv$-definable. The conclusion follows. 
\end{proof}

The following is a corollary of this theorem and Proposition~\ref{sharperwomono}.
 \begin{Corollary}\label{cellpartition}
 Let $f:X\longrightarrow R$ be an $\lrv$-definable function with $X\subseteq R^n$. Then there are a finite partition of $X$ into finitely many $\lrv$-cells $D_1,\dots, D_m$, and $L$-definable functions $f_1,\dots,f_m:R^n\lto R$ such that $f|_{C_i}=f_i|_{C_i}$, for each $i\in \{1,\dots,m\}$. 
 \end{Corollary}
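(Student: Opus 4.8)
The plan is simply to stack the two cited results in the natural order. First I would apply the multi-variate version of Proposition~\ref{sharperwomono} (part 2 of the Remark following it) to the $\lrv$-definable function $f:X\longrightarrow R$. This produces a finite $\lrv$-definable partition $C_1,\dots,C_k$ of $X$ together with $L$-definable functions $f_1,\dots,f_k:R^n\longrightarrow R$ such that $f|_{C_j}=f_j|_{C_j}$ for each $j$. At this point the agreement with $L$-definable functions is already in place; what remains is to upgrade the partition $\{C_j\}$ to a partition into $\lrv$-cells without destroying that agreement.

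For that, I would apply the cell decomposition theorem stated just above (valid for any $\lrv$-definable subset of $R^n$, by weak o-minimality and~\cite[Theorem 4.6]{dugald}) separately to each $C_j$, obtaining for every $j$ a finite partition of $C_j$ into $\lrv$-cells $D_{j,1},\dots,D_{j,m_j}$. Collecting these doubly indexed pieces and reindexing them as $D_1,\dots,D_m$ gives a finite partition of $X$ into $\lrv$-cells. Since each $D_i$ is contained in exactly one of the original $C_j$'s, one takes the associated $f_j$ as the function $f_i$ for that block; then $f|_{D_i}=f|_{C_j}|_{D_i}=f_j|_{D_i}=f_i|_{D_i}$, which is the assertion.

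There is no real obstacle here; the only points requiring a moment's care are that the cell decomposition must be run on the $\lrv$-definable pieces $C_j$ rather than on $X$ directly (otherwise the $\lrv$-cells need not refine $\{C_j\}$ and the coincidence with the $L$-definable $f_j$ is lost), and that one should not expect the boundary functions of the resulting cells to be $L$-definable or continuous. This is exactly why the decomposition is by $\lrv$-cells and the conclusion only claims agreement with $L$-definable functions on each cell, rather than an honest $L$-cell decomposition.
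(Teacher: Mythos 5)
Your argument is correct and is precisely the intended one: the paper gives no written proof, offering only the remark that the corollary follows from the $\lrv$-cell decomposition theorem together with Proposition~\ref{sharperwomono}, and your sketch fills in exactly that gap — apply the multi-variate version of Proposition~\ref{sharperwomono} first, then cell-decompose each resulting $\lrv$-definable piece $C_j$, and reuse the $L$-definable $f_j$ on the refined cells. Your observation about running the cell decomposition piece by piece (so that the resulting cells genuinely refine $\{C_j\}$) is the one small point worth making explicit, and you made it.
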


The following is a crucial result. Its proof makes clear why we did not insist on the continuity of the functions in the definition of ($L$-definable) cells.
\begin{Theorem}
If $D$ is an $\lrv$-cell, then there are finitely many $L$-cells $C_1,\dots ,$ $C_m$ such that $D\subseteq C_1\cup \dots \cup C_m$ and $\dim(D)=\dim(C_1\cup \dots \cup C_m)$.
\end{Theorem}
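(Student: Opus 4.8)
The plan is to induct on $n$, where $D\subseteq R^n$, covering each $\lrv$-$(n+1)$-cell by $L$-cells built from a covering of the underlying $\lrv$-$n$-cell; we may assume $D\neq\varnothing$. For the base case $n=1$, the set $D$ is a convex $\lrv$-definable subset of $R$, so $D\subseteq R$ and $\dim R=1$; if $D$ is infinite then $\dim D=1$ and the single $L$-$1$-cell $R=(-\infty,+\infty)$ already covers $D$ with the right dimension, while if $D$ is finite it is a union of finitely many singletons, each an $L$-$1$-cell, and $\dim D=0$.

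For the inductive step, write $D$ as an $\lrv$-$(n+1)$-cell over an $\lrv$-$n$-cell $D'\subseteq R^n$, and use the induction hypothesis to fix $L$-cells $C_1',\dots,C_k'\subseteq R^n$ with $D'\subseteq C_1'\cup\dots\cup C_k'$ and $\dim(C_1'\cup\dots\cup C_k')=\dim D'$. Suppose first $D$ has the form in clause (ii) of the definition, i.e.\ $D=\{(a,b)\ |\ a\in D',\ f(a)<b<g(a)\}$ with $f,g\colon R^n\to\widehat R$ and $f<g$ on $D'$. Because $R$ is densely ordered, every fibre of the projection $D\to D'$ is a nonempty open interval of $R$, whence $\dim D=\dim D'+1$. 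Since $D\subseteq D'\times R\subseteq\bigcup_i(C_i'\times R)$ and each $C_i'\times R$ is an $L$-$(n+1)$-cell (clause (ii) with constant boundary functions $-\infty$ and $+\infty$) of dimension $\dim C_i'+1$, the cells $C_i'\times R$ cover $D$ and their union has dimension $\dim D'+1=\dim D$.

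Suppose instead $D$ has the form in clause (i), say $D=\graph(f|_{D'})$ with $f\colon R^n\to R$ an $\lrv$-definable function; then $\dim D=\dim D'$, since the projection onto the first $n$ coordinates is a definable bijection $D\to D'$. Apply Corollary~\ref{cellpartition} to $f|_{D'}$ to obtain a partition $D'=E_1\sqcup\dots\sqcup E_l$ into $\lrv$-cells together with $L$-definable functions $f_1,\dots,f_l\colon R^n\to R$ with $f|_{E_j}=f_j|_{E_j}$. For each $j$ the induction hypothesis yields $L$-cells $C_{j,1},\dots,C_{j,k_j}\subseteq R^n$ covering $E_j$ with $\dim(\bigcup_t C_{j,t})=\dim E_j$. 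Then $\graph(f_j|_{E_j})\subseteq\bigcup_t\graph(f_j|_{C_{j,t}})$, and each $\graph(f_j|_{C_{j,t}})$ is an $L$-$(n+1)$-cell directly by clause (i) of the definition --- this is precisely where the fact that $L$-cells are not required to have continuous defining functions gets used. Since $\dim\graph(f_j|_{C_{j,t}})=\dim C_{j,t}$, the cells $\graph(f_j|_{C_{j,t}})$ (over all $j$ and $t$) cover $D=\bigsqcup_j\graph(f_j|_{E_j})$, and the dimension of their union equals $\max_j\dim E_j=\dim D'=\dim D$.

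The \emph{main obstacle} is not the containment but the dimension equality: one must know that the $L$-cells produced do not overshoot $\dim D$. This reduces to the identities $\dim D=\dim D'$ (clause (i)) and $\dim D=\dim D'+1$ (clause (ii)) --- that is, to the behaviour of dimension under definable bijections, graphs, and products with $R$ --- together with the fact that the dimension of a finite union is the maximum of the dimensions of its parts; all of these are available from the well-behaved dimension theory in these structures (see \cite[Theorem~3.1]{forn} and the discussion around it).
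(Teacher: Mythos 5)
Your proposal is correct and follows essentially the same route as the paper's proof: induction on the ambient dimension, with the graph case handled by Corollary~\ref{cellpartition} to replace the $\lrv$-definable function by finitely many $L$-definable ones and then taking graphs over the covering $L$-cells, and the band case handled by lifting the covering $L$-cells to products with $R$. The only (cosmetic) difference is in case~(i), where you first decompose $D'$ via cell partition and then apply the inductive hypothesis to each piece $E_j$, whereas the paper applies the inductive hypothesis to $D'$ once and then decomposes $f$ over the resulting $L$-cells; both orderings yield the same conclusion.
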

\begin{proof}
For an $\lrv$-1-cell this is obvious. Now suppose that $D'$ is an $\lrv$-$n$-cell and, as inductive hypothesis, assume that there are finitely many $L$-cells $C_i'$ such that $D'\subseteq \bigcup_i C'_i$ and $\dim(D')=\dim(\bigcup_i C'_i)$.\newline
\textbf{Case (i).} If $X=\{(a,f(a))\in R ^{n+1}\ |\ a\in D'\}$, where $f:R^n\longrightarrow R$ is an $\lrv$-definable function. By applying Corollary~\ref{cellpartition}, let $\{A_j\}_j$ be a decomposition of $\bigcup _iD'_i$ into cells and $\{f_j:R ^n\longrightarrow R\}_j$ $L$-definable functions such that  $f|_{A_j}=f_j|_{A_j}$, for each $j$. Define for all $i$ and $j$,
$C_{i,j}:=\{(a,f_j(a))\ |\ a\in D'_i\}$. Then clearly each $C_{i,j}$ is an $L$-cell and $X\subseteq \bigcup_{i,j}C_{i,j}$. Moreover, notice that for any $i$ and $j$, $\dim(C_{i,j})=\dim(D'_i)$; hence, by properties of dimension,  
\[\dim \Big( \bigcup _{i,j}\nolimits C_{i,j}\Big)=\dim \Big(\bigcup_i\nolimits D'_i\big)=\dim(D')=\dim(X),\] 
as required.\newline
\textbf{Case (ii).} If $X=\{(a,b)\in R ^{n+1}\ |\ a\in D'\mbox{ and } f(a)<b<g(a)\}$, where $f,g:R^n\longrightarrow \widehat{R}$ are definable functions satisfying $f(x)<g(x)$ for all $x\in D'$. Similarly as in the previous case, let $\{A_j\}_j$ be a decomposition of $\bigcup_i D'_i$ into cells and $\{f_j\}_j,\{g_j\}_j$ be L-definable functions such that $f|_{A_j}=f_j|_{A_j}$, $g|_{A_j}=g_j|_{A_j}$  and $f_j|_{A_j}<g_j|_{A_j}$, for each $j$. Define for all $i$ and $j$ $C_{i,j}=\{(a,b)\ |\ a\in D'_i\mbox{ and } f_j(a)<b<g_j(a)\}$.
Clearly again each $C_{i,j}$ is an $L$-cell and $X\subseteq \bigcup_{i,j}C_{i,j}$. Also, for any $i,j$, $\dim(C_{i,j})=\dim(D'_i)+1$; this implies that 
\[\dim \Big(\bigcup_{i,j}\nolimits C_{i,j}\Big)=\dim \Big(\bigcup_i\nolimits D'_i\Big)+1=\dim (D')+1=\dim(X),\]
finishing this case.
\end{proof}

It follows that any $\lrv$-definable subset of $R^n$ is contained in an $L$-definable set of the same dimension. The same holds for an $\lhen$-definable subset of $R^n$, for it is $\lrv$-definable after picking some parameters in the sort $\rvu$.

\begin{Corollary}
 Let $x$ be a tuple of field sort variables and $\phi(x)$ and $\lhen$-formula. Then there is an $L$-formula $\psi(x)$ such that $\phi(x)\lto \psi(x)$ and $\dim(\phi(x))=\dim(\psi(x))$.
\end{Corollary}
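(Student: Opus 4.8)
The plan is to read the statement off the discussion immediately preceding it, the only real work being to reconcile a single-model argument with the model-relative definition of $\dim$ of a formula. First I would fix a sufficiently saturated model $(R,\rve)$ of $\then$, chosen so that $\dim(\phi(R))$ already realises the maximum $\dim(\phi(x))$. The set $\phi(R)\subseteq R^n$ is $\lhen$-definable; since every sort occurring in $\rve$ is a quotient of a power of $\rvu$ by a $0$-$\lrv$-definable equivalence relation with $\rvo$ as quotient map, the $\rve$-parameters of $\phi$ and any quantifiers ranging over $\rve$-sorts can be traded for $\rvu$-tuples (picking representatives), so $\phi(R)$ is in fact $\lrv$-definable over $R\cup\rvu$. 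By the $\lrv$-cell decomposition theorem above it is then a finite union of $\lrv$-cells $D_1,\dots,D_k$, and by the preceding theorem each $D_i$ lies in a finite union of $L$-cells $C_{i,1},\dots,C_{i,m_i}$ with $\dim\big(\bigcup_j C_{i,j}\big)=\dim(D_i)$. Hence $\phi(R)\subseteq Y:=\bigcup_{i,j}C_{i,j}$, which is $L$-definable, and $\dim(Y)=\max_i\dim(D_i)=\dim(\phi(R))$ because the dimension of a finite union is the maximum of the dimensions. Taking $\psi(x)$ to be an $L$-formula defining $Y$ (with parameters from $R$) gives $\phi(x)\to\psi(x)$ at once. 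This is exactly the content of the sentence ``any $\lrv$-definable subset of $R^n$ is contained in an $L$-definable set of the same dimension'' stated above, applied to $\phi(R)$.

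Next I would deal with the only genuine subtlety: $\dim$ of a formula is a maximum over \emph{all} models, so I must verify $\dim(\psi(x))=\dim(\phi(x))$ and that $\phi\to\psi$ holds everywhere, not just in the model I fixed. By the choice of $(R,\rve)$ we have $\dim(\phi(R))=\dim(\phi(x))$, hence $\dim(\psi(R))=\dim(\phi(x))$; it therefore suffices that the dimension of the $L$-definable set defined by $\psi$ is absolute, i.e.\ the same in every model of $T$ containing its parameters. This is standard in o-minimality: cell decomposition is uniform, so the condition $\dim\geq d$ on a definable family is itself $L$-definable in the parameters, hence preserved between any two models of $T$ containing those parameters. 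If one prefers the formulas not to carry an ambient monster, the same bounds make the passage $\phi\mapsto\psi$ uniform — the integers $k$ and $m_i$ and the shapes of the cells involved are bounded independently of the model, by uniform finiteness for the weakly o-minimal theory $\trv$ and the boundedness implicit in the $L$-cell covering theorem — so that $\then\vdash\forall x\,(\phi(x)\to\psi(x))$ follows by a routine compactness argument.

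I do not expect a real obstacle: all the substance is already carried by the $\lrv$-cell decomposition theorem and the $L$-cell covering theorem established above. The one step I would take care to articulate is the last one, matching the single-model reasoning to the model-relative definition of $\dim$ of a formula; this is precisely where absoluteness of o-minimal dimension (equivalently, uniform finiteness together with compactness) is invoked.
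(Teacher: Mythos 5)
Your core argument is exactly the paper's (unwritten) proof: trade the $\rve$-parameters for $\rvu$-representatives to reduce to the $\lrv$-definable case, then apply the $\lrv$-cell decomposition theorem and the $L$-cell covering theorem just established, which together say that every $\lrv$-definable subset of $R^n$ lies in an $L$-definable set of the same dimension. Your second and third paragraphs—reconciling the fixed-model argument with the max-over-models definition of $\dim(\phi(x))$ by invoking that o-minimal dimension is an $L$-definable condition on parameters together with uniform finiteness—go beyond what the paper writes (it passes over this point in silence and states the corollary as immediate); the concern is reasonable, but the uniformity you need to conclude $\then\vdash\forall x\,(\phi\to\psi)$ is asserted rather than actually checked.
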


From this corollary we conclude that the set of all $L$-definable formulas satisfies the conditions (i) and (ii) in Proposition~\ref{deltaset}. We have thus proved the following. 

\begin{Theorem}\label{ldefstrats}
 If $(S_i)_{i\leq n}$ is a t-stratification \textup{(}of $B_0$\textup{)}, then there is an $L$-definable t-stratification $(S_i')_{i\leq n}$ reflecting $(S_i)_{i\leq n}$ (and thus reflecting any other map reflected by $(S_i)_{i\leq n}$).
\end{Theorem}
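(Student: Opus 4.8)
The plan is to deduce the statement directly from Proposition~\ref{deltaset}, taking $\Delta$ to be the family of all $L$-formulas in the relevant field-sort variables. Condition (i) of that proposition holds trivially, since $L$-formulas are closed under finite disjunction and $\bot$ is an $L$-formula. Condition (ii) is exactly the content of the corollary immediately preceding this theorem: given any $\lhen$-formula $\phi(x)$ there is an $L$-formula $\psi(x)$ with $\phi(x)\to\psi(x)$ and $\dim(\phi(x))=\dim(\psi(x))$ — which itself is extracted from the cell-decomposition results above, showing that every $\lrv$-cell, hence every $\lhen$-definable set, is contained in a finite union of $L$-cells of the same dimension.

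With $\Delta$ in hand, I would proceed as follows. First fix $\lhen$-formulas $\phi_0,\dots,\phi_n$ defining the partition $S_0,\dots,S_n$ of $B_0$; this is possible because a t-stratification is, by definition, a definable partition (absorbing into the language the parameters witnessing this definability). Applying Proposition~\ref{deltaset} to the tuple $(\phi_i)_{i\leq n}$ and to $\Delta$ produces $\lhen$-formulas $\psi_0,\dots,\psi_n$ defining a t-stratification reflecting $(\phi_i)_{i\leq n}$, with the additional feature that each $\psi_{\leq i}:=\psi_0\vee\dots\vee\psi_i$ is equivalent to a formula in $\Delta$, i.e.\ to an $L$-formula. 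Now set $S_i':=\psi_{\leq i}(R)\setminus\psi_{\leq i-1}(R)$, with the convention $\psi_{\leq -1}:=\bot$. Each $S_i'$ is then a Boolean combination of $L$-definable sets, hence $L$-definable, and $(S_i')_{i\leq n}$ is a t-stratification of $B_0$ reflecting $(S_i)_{i\leq n}$; that it reflects every map reflected by $(S_i)_{i\leq n}$ is, as indicated in the statement, immediate from the fact that reflecting a t-stratification entails reflecting everything that stratification reflects (as recorded in~\cite{immi}).

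Since Proposition~\ref{deltaset} already absorbs all of the delicate inductive dimension bookkeeping and the cell-decomposition corollary supplies condition (ii), the assembly above is essentially mechanical. The only point I expect to require a little care — and hence the main, if mild, obstacle — is the uniformity and parameter bookkeeping: Proposition~\ref{deltaset} and the notion of dimension of a formula are phrased "in all models of $\then$", whereas the hypothesis only provides a t-stratification inside a single fixed model $(R,\rve)$. The remedy is routine: name the finitely many relevant parameters, pass to the complete $\lhen$-theory of the resulting expanded structure, and observe that "being a t-stratification of a given definable set" is a first-order property of the theory and that both the cell-decomposition corollary and Proposition~\ref{deltaset} remain valid over this expansion. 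Everything else is a direct application of results already established.
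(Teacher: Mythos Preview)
Your proposal is correct and follows exactly the paper's approach: the paper's proof consists of the single sentence preceding the theorem, observing that the set of $L$-formulas satisfies conditions (i) and (ii) of Proposition~\ref{deltaset} (the latter via the cell-decomposition corollary), and then invoking that proposition. Your additional remarks on parameter bookkeeping and the recovery of the strata $S_i'$ from the cumulative $\psi_{\leq i}$ simply spell out details the paper leaves implicit.
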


\subsection{Application 2: Archimedean t-stratifications}
Here we look at the stratifications induced on the real field by t-stratifications in a non-standard model, pointing out that the results in~\cite[Section 7]{immi} hold in our context. 

Recall that $L$ expands the language $\lori$ and that $T$ is a power bounded o-minimal $L$-theory expanding the theory of real closed fields. We regard the real field $\mathbb R$ as an $L$-structure modelling $T$. We let $\mathcal U$ be a non-principal ultrafilter on $\mathbb N$ and ${}^*\mathbb R$ the ultrapower $\mathbb R^{\mathbb N}/\mathcal U$. Then $\starr$ is a non-archimedean model of $T$ and the set of limited numbers $O:=\{x\in \starr\ |\ \exists n\in \mathbb N(-n\leq x\leq n)\}$ is a $T$-convex subring of $\starr$; this is true because the convex hull of an elementary substructure is always a $T$-convex subring---see~\cite[(2.7)]{driesI}---and $O$ equals the convex hull of $\mathbb R$ in $\starr$. The image of $x\in \mathbb R$ under the elementary embedding of $\mathbb R$ into $\starr$ is denoted by $^*x$. Accordingly, if $X\subseteq \mathbb R^n$, $^*X$ denotes its non-standard version in $\starr$ (i.e. $^*X:=X^\mathbb N/\mathcal U$). 

\begin{Definition}
	Let $(A_i)_{i\leq n}$ be a partition of $\mathbb R^n$ and $X\subseteq \mathbb R^n$. We say that $(A_i)_{i\leq n}$ is an \emph{archimedean t-stratification} of $X$ if each $A_i$ is $L$-definable and $({}^*A_i)_{i\leq n}$ is a t-stratification of $^*X$. 
\end{Definition}

\begin{Remark}\label{archtstratsexist} By Theorem~\ref{ldefstrats} any $L$-definable subset of $\mathbb R^n$ admits an archimedean t-stratification.	
\end{Remark}

 The main theorem in this subsection is the following.

\begin{Theorem}\label{archimedeantstrat}
	If $(A_i)_{i\leq n}$ is an archimedean t-stratification of $X\subseteq \mathbb R^n$, then $(A_i)_{i\leq n}$ is a $C^1$-Whitney stratification of $X$.
\end{Theorem}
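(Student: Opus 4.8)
The strategy is to transfer the archimedean-to-non-archimedean dictionary: an archimedean t-stratification of $X$ is, by definition, a family $(A_i)_{i\le n}$ of $L$-definable sets such that $({}^*A_i)_{i\le n}$ is a t-stratification of ${}^*X$, and we must extract from the strong translation-invariance properties of the t-stratification in $\starr$ the classical regularity conditions (local triviality/$C^1$-smoothness of the strata and Whitney's condition (b)) for the $A_i$ in $\mathbb R^n$. The template for this is precisely \cite[Section 7]{immi}, where the case $L=\lori$ is carried out; so the plan is to go through that argument and check that every ingredient it uses is available in our more general $L$ and $T$-convex setting --- principally that we now have t-stratifications (Theorem~\ref{main3}, Corollary~\ref{existence2}), that they can be taken $L$-definable (Theorem~\ref{ldefstrats}), and that the o-minimal machinery over $\mathbb R$ (cell decomposition, dimension theory, smooth stratification of $L$-definable sets) is available since $T$ is o-minimal.

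First I would recall the qualitative meaning of a t-stratification: for each $d$, on every ball $B$ disjoint from $S_{\le d}$ the data $1_{S_0},\dots,1_{S_n}$ (and any reflected map) are almost translation-invariant in the direction of a $d$-dimensional subspace $V_B\subseteq \overline R^n$, and $V_B$ varies "continuously". Transferred down to $\mathbb R$ via \L{}o\'s and the standard-part map, this says: near a point $p\in A_d$, after an infinitesimal rescaling the set $X$ looks like $V\times(\text{lower-dimensional trace})$ where $V=\operatorname{st}(\mathrm{tsp})$ is a $d$-plane, and this plane is exactly the tangent space $T_pA_d$; moreover the same plane governs all the higher strata meeting a neighbourhood of $p$. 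The key steps, following \cite[\S7]{immi}, are: (1) show each ${}^*A_d$, being a union of rainbow fibres on which the associated graph function $c$ is (by Remark~\ref{fullrefinement}, after refining) the restriction of an $L$-definable \emph{differentiable} function, is a $C^1$ submanifold of $\starr^n$ of dimension $\le d$, and deduce by transfer that $A_d$ is a $C^1$ submanifold of $\mathbb R^n$; (2) identify $T_pA_d$ with $\operatorname{st}$ of the tangent space $\mathrm{tsp}_B((S_i'))$ of the t-stratification at a suitable ball $B$ around (a non-standard point infinitely near) $p$; (3) use the almost-translation-invariance in the direction of that plane, valid on balls disjoint from $S_{\le d}$ hence in particular on balls meeting a higher stratum $A_e$ ($e>d$) near $p$, to get Whitney (b): for $q_k\in A_e$, $q_k\to p$, with secant lines $\ell_k$ and tangent planes $T_{q_k}A_e$ converging, one has $\lim\ell_k\subseteq\lim T_{q_k}A_e$, because both limits are forced to contain the common translation-invariance direction $T_pA_d$ and the orthogonal complement behaves like the trace on a single slice. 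Finally, since $T$ power bounded gives us an $L$-definable t-stratification to begin with (Remark~\ref{archtstratsexist}) and since a $C^1$-Whitney stratification is by definition a finite partition into $C^1$ submanifolds satisfying the frontier condition plus condition (b) --- the frontier condition holding because the $S_i'$ arose from a t-stratification, where $S_{\le d}$ is closed-by-balls --- this assembles into the statement.

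I expect the main obstacle to be step (3), the verification of Whitney's condition (b), and more precisely checking that the arguments of \cite[\S7.2--7.3]{immi} that pass from "almost translation invariant in direction $V$" to "the secant directions converge into $\lim T_{q_k}A_e$" do not secretly use features special to $L=\lori$ --- e.g. they might invoke the specific form of definable functions, or polynomial bounds, or the ordered field structure in a way that for us must instead be routed through the o-minimality of $T$ and through Proposition~\ref{sharperwomono}/Corollary~\ref{cellpartition} to reduce definable functions to $L$-definable (hence $C^1$ a.e.) ones. The rigidity of risometries (they preserve $\rvo$ of differences exactly) is what makes the infinitesimal picture "straight", and one must confirm that the allowed $\mathrm{GL}_n(O_R)$-twists do not disturb the standard-part computation of tangent planes; this is exactly the content one imports from \cite[Lemma 4.3]{immi} and its consequences used in Proposition~\ref{refinerainbow}. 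A secondary, more bookkeeping-level obstacle is making sure the "continuity" of the plane field $B\mapsto\mathrm{tsp}_B$ built into the definition of a t-stratification transfers to genuine continuity of $p\mapsto T_pA_d$ on $\mathbb R$, which is needed both for "$A_d$ is $C^1$" and for the convergence statements; this should follow from compactness/transfer together with \cite[\S7.1]{immi}, but it is the place where the archimedean topology and the valuation topology interact and so deserves care.
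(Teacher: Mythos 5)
Your approach is essentially the same as the paper's: both reduce the statement to the argument of~\cite[Section 7]{immi} (specifically the proof of \cite[Proposition 7.10]{immi}) and observe that the non-standard hypothesis that $({}^*A_i)_{i\leq n}$ is a t-stratification of ${}^*X$ then forces $(A_i)_{i\leq n}$ to be a $C^1$-Whitney stratification of $X$ rather than merely of $\mathbb R^n$. One remark worth making: where you flag step (3) (Whitney's condition (b)) as the likely place to worry about $L$-dependence, the paper instead pins the needed verification on a precondition, namely \cite[Hypothesis 7.1]{immi}, the orthogonality of the value group $\Gamma$ and the residue field $\overline R$, which in the $T$-convex setting is exactly \cite[Proposition 5.8]{driesI} and is a second point (beyond the existence of t-stratifications) where power boundedness of $T$ is essential; once that is supplied, \cite[Corollary 7.6]{immi} and the proof of \cite[Proposition 7.10]{immi} go through with only notational changes (``semi-algebraic'' replaced by ``$L$-definable''). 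Your plan to ``check every ingredient'' would find this, but it is the specific hypothesis you should name.
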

\begin{proof} For a definition of Whitney stratifications see~\cite[Section 9.7]{rag} or---more in accordance with our interests---\cite[Definition 7.8]{immi}. We limit ourselves to justify that the proof of~\cite[Proposition 7.10]{immi} can be carried out in our context.

By~\cite[Proposition 5.8]{driesI} the value group $\Gamma$ and the residue field $\overline R$ are \emph{orthogonal}\footnote{i.e. any definable set $X\subseteq \Gamma^l\times \overline R^m$ is a finite union of of sets of the form $E\times F$ where $E\subseteq \Gamma^l$ and $F\subseteq \overline R^m$ are definable.}; parenthetically, this is true only when $T$ is power bounded. Hence,~\cite[Hypothesis 7.1]{immi} holds in our context, and the conclusion of~\cite[Corollary 7.6]{immi} does too in consequence. The proof of~\cite[Proposition 7.10]{immi} can be then carried out with minor, straightforward modifications (e.g. `semi-algebraic' needs to be substituted by `$L$-definable' everywhere). That proposition claims that any archimedean t-stratification of the whole of $\mathbb R^n$ is a Whitney stratification. The implicit assumption that $({}^*A_i)_{i\leq n}$ is t-stratification of $^*X$ easily implies that the Whitney stratification $(A_i)_{i\leq n}$ is in fact one of $X$, see for example the second part of the proof of~\cite[Theorem 7.11]{immi}.
\end{proof}

The next is an easy result that follows from Remark~\ref{archtstratsexist} and the last theorem.

\begin{Corollary}
   Every $L$-definable subset of $\mathbb R^n$ admits a Whitney stratification.
\end{Corollary}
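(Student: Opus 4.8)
The plan is simply to concatenate the two results that immediately precede this corollary. First I would fix an $L$-definable set $X\subseteq \mathbb R^n$ and invoke Remark~\ref{archtstratsexist}: since $X$ is $L$-definable, it admits an archimedean t-stratification $(A_i)_{i\leq n}$, i.e. an $L$-definable partition of $\mathbb R^n$ such that $({}^*A_i)_{i\leq n}$ is a t-stratification of ${}^*X$ in the $T$-convex field $(\starr,O)$. The content here — which is already packaged inside Remark~\ref{archtstratsexist} via Theorem~\ref{ldefstrats} — is that the $L$-definable t-stratification of ${}^*X$ produced in $\starr$ is cut out by $L$-formulas, so by {\L}o\'s's theorem its traces on $\mathbb R^n$ form an $L$-definable partition $(A_i)_{i\leq n}$ whose non-standard versions recover the strata upstairs.

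Next I would apply Theorem~\ref{archimedeantstrat} to this partition: an archimedean t-stratification of $X$ is automatically a $C^1$-Whitney stratification of $X$. Composing the two steps yields that $X$ admits a ($C^1$-) Whitney stratification, as claimed.

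There is essentially no genuine obstacle; the only points requiring (trivial) verification are that an archimedean t-stratification really is a finite partition of $\mathbb R^n$ into $L$-definable — hence o-minimally well-behaved — pieces restricting to a stratification of $X$, and that this is exactly the data called for in the chosen notion of Whitney stratification (cf.~\cite[Definition 7.8]{immi} or \cite[Section 9.7]{rag}); both are immediate from the definitions recalled above together with Theorem~\ref{archimedeantstrat}. I would also note, as the surrounding text does, that power boundedness of $T$ is not actually needed for this particular statement — it is classical, see~\cite{loi} — and that the point of the present argument is only that the conclusion falls out for free from the t-stratification machinery developed here.
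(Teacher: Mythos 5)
Your argument is exactly the one the paper intends: Remark~\ref{archtstratsexist} produces an archimedean t-stratification of the $L$-definable set $X\subseteq\mathbb R^n$, and Theorem~\ref{archimedeantstrat} upgrades that to a $C^1$-Whitney stratification of $X$. No gap; the proposal matches the paper's (one-line) proof.
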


This corollary is superseded by a result of T. L. Loi (\cite{loi}) showing that Whitney stratifications exists for all definable sets in \emph{any} o-minimal expansion of the real field. The corollary above is limited to such expansion being power bounded. 

Our last result exhibits a property of archimedean t-stratifications that Whitney stratifications do not posses. For $X\subseteq \mathbb R^n$ and $p\in \mathbb R^n$, the \emph{tangent cone of $X$ at $p$} is the set 
\[
C_p(X):=\{y\in \mathbb R^n\ |\ \forall \varepsilon\in \mathbb R_{>0}\exists x\in X,r\in \mathbb R_{>0}(\|x-p\|<\varepsilon\wedge \|r(x-p)-y\|<\varepsilon)\};
\]
where $\|\cdot\|$ is the usual norm on $\mathbb R^n$. 

Since the \emph{Euclidean topology} on $^*\mathbb R^n$---that induced by $\|\cdot\|$ and in fact given by $\sum_1^nx_i^2$ for any $(x_1,\dots ,x_n)\in {}^*\mathbb R^n$---coincides with the valuative topology on $^*\mathbb R^n$, we can see that ${}^*(C_p(X))=\mathcal C_{^*p}({}^*X)$ whenever $X\subseteq \mathbb R^n$ is $L$-definable and $p\in \mathbb R^n$. This fact and Theorem~\ref{intro1} imply our last result.

\begin{Corollary}
  Let $X$ be an $L$-definable subset of $\mathbb R^n$ and $p$ a point in $\mathbb R^n$. Given a partition $(A_i)_{i\leq n}$ of $\mathbb R^n$, define the sets $C_{p,0}:=C_p(A_0)$ and $C_{p,i+1}:=C_{p}(A_0\cup \cdots \cup A_{i+1})\setminus C_{p}(A_0\cup \cdots \cup A_{i})$ for $0\leq i< n$. If $(A_i)_{i\leq n}$ is an archimedean t-stratification of $X$, then $(C_{i,p})_{i\leq n}$ is an archimedean t-stratification of $C_p(X)$.
\end{Corollary}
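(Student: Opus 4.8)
The plan is to deduce this corollary directly from the machinery already assembled, namely Theorem~\ref{intro1} (t-stratifications pass to tangent cones in the non-archimedean setting), Theorem~\ref{archimedeantstrat} (which is not actually needed, but is in the same circle of ideas), and the identification ${}^*(C_p(X))=\mathcal{C}_{{}^*p}({}^*X)$ noted in the paragraph preceding the statement. First I would unwind the definitions: we are given that $(A_i)_{i\leq n}$ is an archimedean t-stratification of $X$, which by definition means each $A_i$ is $L$-definable and $({}^*A_i)_{i\leq n}$ is a t-stratification of ${}^*X$. I want to show $(C_{p,i})_{i\leq n}$ is an archimedean t-stratification of $C_p(X)$, so I must check two things: that each $C_{p,i}$ is $L$-definable, and that $({}^*C_{p,i})_{i\leq n}$ is a t-stratification of ${}^*(C_p(X))$.

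For the $L$-definability of the $C_{p,i}$: the tangent cone $C_p(Y)$ of an $L$-definable set $Y$ at an $L$-definable point $p$ is $L$-definable, since the defining condition ($\forall\varepsilon>0\,\exists x\in Y,r>0\dots$) is a first-order $L$-formula (it uses only $\|\cdot\|$, which is an $L$-term as $\lori\subseteq L$, together with parameters from $Y$ and $p$). Hence each $C_p(A_0\cup\cdots\cup A_j)$ is $L$-definable, and so is each Boolean combination $C_{p,i+1}=C_p(A_0\cup\cdots\cup A_{i+1})\setminus C_p(A_0\cup\cdots\cup A_i)$; likewise $C_{p,0}=C_p(A_0)$. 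This disposes of the first requirement.

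For the second requirement I would invoke the compatibility of $*$ with the tangent-cone operation. Since each $S_j := A_0\cup\cdots\cup A_j$ is $L$-definable, the remark in the excerpt gives ${}^*(C_p(S_j))=\mathcal{C}_{{}^*p}({}^*S_j)$ for each $j$. Because $*$ (ultrapower) commutes with finite Booolean operations, we get ${}^*C_{p,0}=\mathcal{C}_{{}^*p}({}^*A_0)=\mathcal{C}_{{}^*p,0}$ and ${}^*C_{p,i+1}=\mathcal{C}_{{}^*p}({}^*S_{i+1})\setminus\mathcal{C}_{{}^*p}({}^*S_i)=\mathcal{C}_{{}^*p,i+1}$, where $\mathcal{C}_{{}^*p,i}$ denotes the non-archimedean induced stratification of $\mathcal{C}_{{}^*p}({}^*X)$ as in the statement of Theorem~\ref{intro1} (applied with ${}^*X$ the $\lhen$-definable set and ${}^*p$ the point). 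Now Theorem~\ref{intro1} applies precisely because $({}^*A_i)_{i\leq n}$ is a t-stratification of ${}^*X$ (our hypothesis), and it concludes that $(\mathcal{C}_{{}^*p,i})_{i\leq n}$ is a t-stratification of $\mathcal{C}_{{}^*p}({}^*X)={}^*(C_p(X))$. Combining, $({}^*C_{p,i})_{i\leq n}$ is a t-stratification of ${}^*(C_p(X))$, which is the second requirement.

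The only genuinely delicate point — and the step I would expect to need the most care — is the identification ${}^*(C_p(Y))=\mathcal{C}_{{}^*p}({}^*Y)$, i.e. that the \emph{archimedean} tangent cone transfers to the \emph{non-archimedean} (valuative) tangent cone under $*$; the excerpt asserts this for $L$-definable $Y$ using that the Euclidean topology on ${}^*\mathbb{R}^n$ coincides with the valuative topology. I would spell this out: an element $y\in{}^*\mathbb{R}^n$ lies in $\mathcal{C}_{{}^*p}({}^*Y)$ iff for every $\gamma\in\Gamma$ there are $x\in{}^*Y$, $r>0$ with $\val(x-{}^*p)>\gamma$ and $\val(r(x-{}^*p)-y)>\gamma$; since the valuation topology is generated by the Euclidean balls $B(0,{}^*\varepsilon)$, $\varepsilon\in\mathbb{R}_{>0}$ (this is where $T$ power bounded and $O=\conv(\mathbb{R})$ enter, via ${}^*\mathbb{R}$ having $\mathbb{R}$-indexed cofinal valuation), the quantifier $\forall\gamma\in\Gamma$ may be replaced by $\forall\varepsilon\in\mathbb{R}_{>0}$, and then the resulting statement is exactly the $*$-transfer of the first-order property ``$y\in C_p(Y)$''. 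Once this identification is in hand, the rest of the proof is the bookkeeping above, and I would present it compactly since all the heavy lifting is already done in Theorems~\ref{intro1} and~\ref{ldefstrats} and the surrounding discussion.
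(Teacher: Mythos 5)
Your approach matches the paper's, which proves the corollary in a single sentence by citing the identification ${}^*(C_p(X))=\mathcal C_{^*p}({}^*X)$ (a consequence of the coincidence of the Euclidean and valuative topologies on ${}^*\mathbb R^n$) together with Theorem~\ref{intro1}; you simply spell out the bookkeeping (the $L$-definability of the $C_{p,i}$ and the commutation of $*$ with finite Boolean operations), which is entirely in the spirit of the paper. One technical slip in your expansion of the transfer step: by \L{}o\'s, the quantifier $\forall\varepsilon$ in ${}^*(C_p(Y))$ ranges over ${}^*\mathbb R_{>0}$, not over standard $\mathbb R_{>0}$, and it is the ${}^*\mathbb R_{>0}$-radius Euclidean balls (not the standard-radius ones) that form a neighbourhood base mutually cofinal with the valuation balls $\{\val(\cdot)>\gamma : \gamma\in\Gamma\}$ --- indeed $\val({}^*\varepsilon)=0$ for every standard $\varepsilon>0$, so the standard balls generate a strictly coarser topology and your claimed ``$\mathbb R$-indexed cofinal valuation'' does not hold. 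The identification ${}^*(C_p(Y))=\mathcal C_{{}^*p}({}^*Y)$ remains correct once the quantifier range is fixed, and the rest of your argument goes through unchanged.
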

From Theorem~\ref{archimedeantstrat} we see that an archimedean t-stratification of $X$ \emph{induces} a Whitney stratification on the tangent cones of $X$; the analogous claim is false if we start instead  with a Whitney stratification (see~\cite[Example 3.16]{egr}).

\addcontentsline{toc}{chapter}{Bibliography}
\bibliographystyle{plain}
\bibliography{Thesisbi}

\begin{thebibliography}{10}

\bibitem{rag}
J.~Bochnak, M.~Coste, and M.-F. Roy.
\newblock {\em Real Algebraic Geometry. {Ergebnisse der Mathematik und ihrer
  Grenzgebiete (3)}}.
\newblock Springer-Verlag, 1998.

\bibitem{raf2}
R.~Cluckers and F.~Loeser.
\newblock $b$-minimality.
\newblock {\em Journal of Mathematical Logic}, 7(2), 2007.

\bibitem{driesII}
L.~{van den} Dries.
\newblock T-convexity and tame extensions {II}.
\newblock {\em Journal of Symbolic Logic}, 62(1), 1997.

\bibitem{driesI}
L.~{van den} Dries and A.~H. Lewenberg.
\newblock T-convexity and tame extensions.
\newblock {\em Journal of Symbolic Logic}, 60(1), 1995.

\bibitem{forn}
A.~Fornasiero and I.~Halupczok.
\newblock Dimension in topological structures: Topological closure and local
  property.
\newblock {\em Groups and Model Theory: Contemporary Mathematics, AMS}, 576,
  2012.

\bibitem{immi}
I.~Halupczok.
\newblock Non-archimedean {Whitney} stratifications.
\newblock {\em Proceedings of the London Mathematical Society}, 109(5), 2014.

\bibitem{immi2}
I.~Halupczok.
\newblock Stratifications in valued fields.
\newblock {\em Proceedings of the Second International Valuation Theory
  Conference, EMS series of conferences reports}, 2014.

\bibitem{loi}
T.~L. Loi.
\newblock Verdier and strict {Thom} stratifications in o-minimal structures.
\newblock {\em Illinois Journal of Mathematics}, 42(2), 1988.

\bibitem{dugald}
D.~Macpherson, D.~Marker, and C.~Steinhorn.
\newblock Weakly o-minimal structures and real closed fields.
\newblock {\em Transactions of the American Mathematical Society}, 352(12),
  2000.

\bibitem{miller2}
C.~Miller.
\newblock A growth dichotomy for o-minimal expansions of ordered fields.
\newblock {\em Logic: from foundations to applications (European Logic
  Colloquium 1993, W. Hodges, J. Hyland, C. Steinhorn, and J. Truss editors).
  Oxford University Press}, 1996.

\bibitem{egr}
E.~Garc\'ia Ram\'irez.
\newblock Non-archimedean stratifications of tangent cones.
\newblock {\em To appear}, 2016.

\bibitem{miller}
L.~van~den Dries and C.~Miller.
\newblock Geometric categories and o-minimal structures.
\newblock {\em Duke Mathematical Journal}, 84(2), 1996.

\bibitem{yin2}
Y.~Yin.
\newblock Quantifier elimination and minimality conditions in algebraically
  closed valued fields.
\newblock {\em arXiv:1006.1393v1}, 2010.

\bibitem{yin}
Y.~Yin.
\newblock On polynomially bounded {$T$}-convex fields.
\newblock {\em arXiv:1509.07695v1}, 2015.

\end{thebibliography}

\end{document}